\newcommand{\IC}{{\mathbb{C}}}
\newcommand{\IF}{{\mathbb{F}}}
\newcommand{\IZ}{{\mathbb{Z}}}
\newcommand{\fA}{{\mathfrak{A}}}   
\newcommand{\fp}{{\mathfrak{p}}}     %gothic 
\newcommand{\cO}{{\mathcal{O}}}
\newcommand{\bB}{{\mathbf{B}}}
\newcommand{\bb}{{\mathbf{b}}}
\newcommand{\bc}{{\mathbf{c}}}
\DeclareMathOperator{\Hom}{Hom}             %      hom's
\DeclareMathOperator{\Syl}{Syl}                  %       Sylow
\DeclareMathOperator{\Res}{Res}               %       Restriction
\DeclareMathOperator{\Ind}{Ind}                  %       Induction
\DeclareMathOperator{\Inf}{Inf}                    %       Inflation
\DeclareMathOperator{\tr}{tr}			%         transgression map
\DeclareMathOperator{\Irr}{Irr}
\DeclareMathOperator{\Lin}{Lin}
\DeclareMathOperator{\soc}{soc}
\DeclareMathOperator{\head}{head}
\DeclareMathOperator{\rad}{rad}
\DeclareMathOperator{\TS}{TS}
\DeclareMathOperator{\St}{St}
\DeclareMathOperator{\Sc}{Sc}
\DeclareMathOperator{\Br}{Br}
\DeclareMathOperator{\Triv}{Triv}
\DeclareMathOperator{\GL}{\operatorname{GL}}
\DeclareMathOperator{\SL}{\operatorname{SL}}
\DeclareMathOperator{\PSL}{\operatorname{PSL}}
\newcommand{\indhg}[2]{\!\uparrow_{#1}^{#2}}   %induction from H to G
\newcommand{\resgh}[2]{\!\downarrow^{#1}_{#2}}   %restriction from G to H
\let\lra=\longrightarrow
\let\wh=\widehat
\newtheorem{thm}{Theorem}[section]
\newtheorem{lem}[thm]{Lemma}
\newtheorem{cor}[thm]{Corollary}
\newtheorem{prop}[thm]{Proposition}
\newtheorem{nota}[thm]{Notation}
\newtheoremstyle{defnew}{4ex}{}{}{}{\bf}{}{.5em}{}
\theoremstyle{defnew}
\newtheorem{notadefn}[thm]{Notation-Definition}
\newtheorem{conv}[thm]{Convention}
\theoremstyle{remark}
\newtheorem{rem}[thm]{Remark}
\theoremstyle{theorem}
\begin{document}

	%--------------------------------------------------------------------------------------------------------
	%------------------------------------------ TITLE MATTERS -------------------------------------
	%--------------------------------------------------------------------------------------------------------

	\title[Trivial source character tables of $\SL_2(q)$]{Trivial source character tables of $\SL_2(q)$}
	
	\date{\today}

	\author{{Bernhard B\"ohmler, Niamh Farrell and Caroline Lassueur}}
	\address{{\sc Bernhard B\"ohmler},  FB Mathematik, TU Kaiserslautern, Postfach 3049, 67653 Kaiserslautern, Germany}
	\email{boehmler@mathematik.uni-kl.de}
	\address{{\sc Niamh Farrell}, Institut f\"ur Algebra, Zahlentheorie und Diskrete Mathematik, Leibniz Universit\"at Hannover, Welfengarten~1, 30176 Hannover, Germany.}
	\email{farrell@math.uni-hannover.de}
	\address{{\sc Caroline Lassueur},  FB Mathematik, TU Kaiserslautern, Postfach 3049, 67653 Kaiserslautern, Germany}
	\email{lassueur@mathematik.uni-kl.de}
	
	\keywords{Special linear group, trivial source modules, $p$-permutation modules, species tables, character theory, block theory}
	
	\makeatletter
	\@namedef{subjclassname@2020}{%
		\textup{2020} Mathematics Subject Classification}
	\makeatother
	
	\subjclass[2020]{Primary 20C20, 20C33}

	\begin{abstract}
		We compute the trivial source character tables (also called species tables of the trivial source ring) of the infinite family of  finite groups $\SL_{2}(q)$ over a large enough field $k$ of positive characteristic~${\ell}$ via character-theoretical methods in the cases in which $q$ is odd,  $\ell \mid (q\pm1)$ when~$\ell$ is odd, and $q\equiv \pm 3\pmod{8}$ when $\ell=2$. 
		%a Sylow $\ell$-subgroup  is cyclic or a quaternion group of order~$8$.
		%%a Klein-four group or 
	\end{abstract}

	\thanks{
		The authors gratefully acknowledge financial support by DFG SFB -TRR 195 `Symbolic Tools in Mathematics and their Application'. The present article started as  part of the Project A18 thereof. 
		%Much of the work for this project was done at the Mathematisches Forschungsinstitut Oberwolfach in March 2021 supported by an Oberwolfach Research Fellowship. We thank the Institute for their generous support and hospitality. 
	}
	
	\maketitle

	%--------------------------------------------------------------------------------------------------------
	%------------------------------------------ Heading of pages--------------------------------------
	%-------------------------------------------------------------------------------------------------------
	
	\pagestyle{myheadings}
	\markboth{B. B\"ohmler, N. Farrell, C. Lassueur}{Trivial source character tables of $\SL_2(q)$}

	%--------------------------------------------------------------------------------------------------------
	%------------------------------------------ INTRODUCTION--------------------------------------
	%-------------------------------------------------------------------------------------------------------
	\vspace{6mm}
	\section{Introduction}

	In the representation theory of finite groups in characteristic zero it is customary to work with the ordinary character table, from which information about the representations of the group can be recovered  efficiently.  In positive characteristic, representation rings are a convenient way of organising information about direct sums and tensor products of modules, and they give us a unified way to view ordinary and Brauer character tables.   
	Given a finite group $G$ and a field $F$, the Green ring $a(F G)$ of $F G$ is defined to be the free abelian group on the set of  isomorphism classes $[M]$ of indecomposable $F G$-modules, with addition given by taking direct sums and multiplication induced by the tensor product over $F$.  Then $A(F G):=\IC\otimes_{\IZ}a(FG)$  is  a commutative and associative $\IC$-algebra. When $F=\IC$, $a(\IC G)$ is the Grothendieck ring of~$\IC G$ and every ring homomorphism $a(\IC G)\lra \IC$ is given by a trace map
	$t_{g}:[M]\mapsto \tr(g,M)$ with~$g\in G$. After tensoring with $\IC$, the sum of these maps over a set of representatives $\text{cc}(G)$ of the conjugacy classes of $G$ is an isomorphism $\sum_{g \in \text{cc}(G)} t_{g}: A(\IC G)\overset{\cong}{\lra} \bigoplus_{\text{cc}(G)}\IC\,,$ and hence  $A(\IC G)$ is semisimple.  
	In positive characteristic, on the other hand, if $F$ is a large enough field of characteristic $\ell>0$ then the Grothendieck ring is $R(FG):=a(FG)/a_{0}(FG,1)$, where $a_{0}(FG,1)$ is the ideal of $a(FG)$ spanned by the difference elements $[M_{2}]-[M_{1}]-[M_{3}]$ where~$0\rightarrow M_{1}\rightarrow M_{2}\rightarrow M_{3}\rightarrow 0$ is a short exact sequence of $FG$-modules.  In this case every ring homomorphism $R(FG)\lra \IC$ is given by a map $t_{g}: R(FG)\lra \IC$ where $g\in G$ is an $\ell'$-element  and for a given $F G$-module $M$, $t_{g}(M)$ is the sum of the lifts to $\IC$ of the eigenvalues of $g$ on the restricted module $\Res^{G}_{\langle g\rangle}(M)$. Once again, after tensoring with $\IC$, the sum of these maps over a set of representatives $\text{cc}(G)_{\ell'}$ of the ${\ell'}$-conjugacy classes of $G$ yields an isomorphism  \smallskip $\sum_{g \in \text{cc}(G)_{\ell'}} t_{g}: \IC\otimes_{\IZ}R(FG)\overset{\cong}{\lra} \bigoplus_{\text{cc}(G)_{\ell'}}\IC.$ 
		\par
	In \cite{BP} Benson and Parker generalised such constructions and defined a \emph{species} of any subalgebra, ideal or quotient $A$ of $A(FG)$, to be an algebra homomorphism $A\lra\IC$. 
	Evaluating the species of $A(\IC G)$ at the simple $\IC G$-modules yields the species table of $A(\IC G)$, which is in fact just the ordinary character table of $G$. Similarly, if $F$ is large enough and of characteristic $\ell>0$ then 
	the species table of $\IC\otimes_{\IZ}R(FG)$, calculated by evaluating the species of $\IC\otimes_{\IZ}R(FG)$ at the simple $FG$-modules,  is just the Brauer character table of $G$. 
	Benson and Parker \cite{BP} went on to prove that in the latter, positive characteristic case, many of the properties of representation rings, species and $\ell$-blocks are governed by the \emph{trivial source ring} $a(FG,\Triv)$, which is defined to be the subring of $a(FG)$ generated by the (finite) set of all isomorphism classes of indecomposable trivial source $FG$-modules.  
	We will see in Section~\ref{sec:prelim} that $A(FG,\Triv):=\IC\otimes_{\IZ} a(FG,\Triv)$ is semisimple.  
	Evaluating the species of $A(FG,\Triv)$ at the  indecomposable trivial source $FG$-modules yields  a square matrix  called the \emph{trivial source character table} (or \emph{species table of the trivial source ring}), denoted by $\Triv_{\ell}(G)$. This table provides us with  information about the character values of all trivial source $FG$-modules and their Brauer quotients at $\ell'$-conjugacy classes. For $\ell$-groups,  $\Triv_{\ell}(G)$ is just the table of marks. For all other groups, however, unlike ordinary and Brauer character tables, only a very small number of trivial source character tables have been published in the literature: \cite[Appendix]{BensonBookOld} gives $\Triv_{2}(\fA_{5})$ and \cite[Example~4.10.12]{LP} gives $\Triv_{3}(M_{11})$.
	\par
	The aim of this article is to calculate the \emph{generic} trivial source character tables for the infinite family of finite groups of Lie type $\SL_{2}(q)$.
	Our main results are obtained in Theorems~\ref{thm:l|q-1}, ~\ref{thm:l|q+1} and ~\ref{thm:sl2ql=2} where  we calculate  $\Triv_{\ell}(\SL_{2}(q))$ using a  block- and character-theoretic approach in the following cases:
	\begin{enumerate}[\,\,(1)]
		\item $q$ and $\ell$ are odd and $\ell\mid (q-1)$, in which case the Sylow $\ell$-subgroups are cyclic;
		\item $q$ and $\ell$ are odd and $\ell\mid (q+1)$, in which case the Sylow $\ell$-subgroups are cyclic;
		\item $q\equiv \pm 3\pmod{8}$ and $\ell=2$,  in which case the Sylow $\ell$-subgroups are quaternion groups of order~$8$. 
	\end{enumerate}
	In all cases, we make extensive use of the known generic character table and block distribution of $\SL_{2}(q)$ as described in \cite{BonBook}. 
	When $\ell$ is odd our method and results strongly rely on the recent classification of the trivial source modules in blocks with cyclic defect groups by Hiss and the 3rd author in \cite{HL20}. 
	When $\ell=2$ and $q\equiv\pm 3\pmod{8}$ we first compute the trivial source character table $\Triv_{2}(\PSL_{2}(q))$, where the situation is  easier to deal with because the Sylow $2$-subgroups of $\PSL_{2}(q)$ ($q\equiv\pm 3\pmod{8}$) are Klein-four groups,  and $2$-blocks with defect groups~$C_{2}\times C_{2}$ have been classified up to source-algebra equivalence in \cite{CEKL}. This allows us to reduce our calculations to the cases of $\PSL_{2}(3)\cong\fA_{4}$ and $\PSL_{2}(5)\cong\fA_{5}$. Then $\Triv_{2}(\SL_{2}(q))$  is  obtained to a large extent via inflation from $\PSL_{2}(q)$ because the centre of $\SL_{2}(q)$ is contained in all the non-trivial $2$-subgroups. Indeed, inflation preserves trivial source modules, however, it must be taken into account that vertices change as $\ell=2=|Z(\SL_{2}(q))|$.

	The paper is organised as follows.
	In Section~\ref{sec:prelim} we summarise the background material we will use throughout the paper. In particular, we  introduce the trivial source character tables of a finite group formally following the approach of Lux and Pahlings in~\cite[Section 4.10]{LP}. In addition, we collect useful results about trivial source modules, the species of the trivial source ring, and the group $\SL_{2}(q)$ which we will use throughout our calculations of the tables. 
	In Section~\ref{sec:lmidq-1} and Section~\ref{sec:lmidq+1} we calculate $\Triv_{\ell}(\SL_{2}(q))$ in the cases in which $\ell$ is odd and divides either $q-1$ or $q+1$. 
	Next, we consider the characteristic $\ell=2$ and assume that  $q\equiv \pm 3\pmod{8}$. In Section~\ref{sec:PSLl=2} we compute $\Triv_{2}(\PSL_{2}(q))$. From this we can deduce a large part of $\Triv_{2}(\SL_{2}(q))$ and so in Section~\ref{SL2l=2} it only remains to describe the $2$-projective characters of $\SL_{2}(q)$.
	\par
	Further combinations for $\ell$ and $q$ will be studied in forthcoming papers.

	\vspace{8mm}
\section{Notation and background results}\label{sec:prelim}

%-------
\vspace{2mm}

\subsection{General notation}
Throughout, unless otherwise stated, we adopt the notation and conventions given below.
We let~$\ell$ be a prime number and  $G$ denote a finite group of order divisible by~$\ell$. We let $(K,\cO,k)$ be an $\ell$-modular system, where  $\cO$ denotes a complete discrete valuation ring of characteristic zero with unique maximal ideal $\frak{p}:=J(\cO)$, algebraically closed residue field~$k=\cO/\fp$ of characteristic~$\ell$, and field of fractions $K=\text{Frac}(\cO)$, which we assume to be large enough for $G$ and its subgroups in the sense that $K$ contains a root of unity of order $\exp(G)$, the exponent of~$G$. We let $\IF_{q}$ denote the finite field with $q$ elements and assume that  
$q$ is odd. If $r$ is a non-zero natural number, we denote by $\mu_{r}$ the group of the $r$-th roots of unity in an algebraic closure $\IF$ of $\IF_{q}$, i.e. $\mu_{r}=\{\xi\in\IF\mid \xi^r=1\}$.\par
For a subgroup $H\leq G$, we let $[H]$ denote a set of representatives of the conjugacy classes of~$H$,  $[H/\equiv]$ a set of representatives of the conjugacy classes of $H$ up to inverse, and $[H]_{\ell'}$ a set of representatives of the conjugacy classes of $H$ of order prime to $\ell$. Given a positive integer~$n$, we denote by $C_{n}$ the cyclic group of order~$n$ and by $\mathcal{D}_{n}$ the dihedral group of order $n$. The quaternion group of order~$8$ is denoted by $\mathcal{Q}_{8}$. 
\par
For $R\in\{\cO,k\}$, $RG$-modules are assumed to be finitely generated left $RG$-lattices, that is,  free as $R$-modules.  We let  $R$  denote the trivial $RG$-lattice. 
Given a subgroup $H\leq G$, we write $\Res^G_H(M)=M\resgh{G}{H}$ for the restriction of the $kG$-module $M$ to $H$ and  $\Ind_H^G(N)=N\indhg{H}{G}$ for the induction of the $kH$-module $N$ to $G$. Given a normal subgroup $U$ of $G$, we write $\Inf_{G/U}^{G}(M)$ for the inflation of the $k[G/U]$-module $M$ to $G$.  We write $M^{\ast}:=\Hom_k(M,k)$ for the $k$-dual of a $kG$-module~$M$,  $\soc(M)$ for its socle, % $\head(M)$ for  its head, and  $\rad(M)$ for its radical. 
and if $M$ is  uniserial then we denote by $\ell(M)$ its  composition length. 
If $M$ is a $kG$-module and $Q\leq G$, then the Brauer quotient (or Brauer construction) of $M$ at $Q$ is the $k$-vector space 
$$M[Q]:=M^{Q}\big/ \sum_{R<Q}\tr_{R}^{Q}(M^{R}),$$
where $M^{Q}$ denotes the fixed points of $M$ under $Q$ and $\tr_{R}^{Q}$  for $R<Q$ denotes  the relative trace map. 
This vector space has a natural structure of a $kN_{G}(Q)$-module, but also of a $kN_{G}(Q)/Q$-module, and is equal to zero if $Q$ is not an $\ell$-group.
%We let $\Omega$ denote the  usual Heller operator. 
If $S$ is a simple $kG$-module, then we let $P_{S}$ denote its projective cover, and we let $\Sc(G,Q)$ denote the Scott module of the group $G$ with respect to the subgroup $Q\leq G$ (see~\cite{Bro85}, for a definition).
\par 
Unless otherwise stated, by an $\ell$-\emph{block} we mean a block of $kG$. We write $\bB_0(G)$ for the principal $\ell$-block of $G$ and  $\bB_{1}\sim_{SA}\bB_{2}$ to indicate that there exists a source-algebra equivalence between  the $\ell$-blocks $\bB_{1}$ and $\bB_{2}$.
We denote by $\Irr(G)$ (resp. $\Irr(\bB)$)  the set of irreducible $K$-characters of~$G$ (resp. of the $\ell$-block $\bB$). Since the blocks of $\cO G$ are in bijection with the blocks of $kG$ via reduction modulo $J(\cO)$, by abuse of notation and terminology, given an $\ell$-block $\bB$ of $kG$, we write $\Irr(\bB)$ for the set of irreducible $K$-characters of the corresponding block of $\cO G$ and talk about the ordinary irreducible characters of $\bB$. For a cyclic group $H$, we also write $H^\wedge$~instead of $\Irr(H)$. We let $\langle-,-\rangle_{G}$ denote the standard inner product on the space of class functions of $G$.
\par
%%We recall that the reduction modulo $\fp$ of an $\cO G$-lattice $L$  is $L/\mathfrak{p}L\cong k\otimes_{\cO}L$, and a $kG$-module $M$ is said to be liftable if there exists an $\cO G$-lattice $\widetilde{M}$ such that $M\cong \widetilde{M}/\mathfrak{p}\widetilde{M}$.
%%Finally, we assume that the reader is acquainted with the terminology of equivalences of block algebras, such as Morita equivalences and source-algebra equivalences. 
Finally, we refer the reader to \cite{LinckBook, TheBook, BonBook} for further standard notation and background results in the modular representation theory of finite groups and on the special linear group $\SL_{2}(\IF_{q})$.\\

	%--------------------------------------------------------------------------------------------------------
%------------------------------------------ BACKGROUND    --------------------------------------
%------------------------------------------------------------------------------------------------------
%-------

\vspace{2mm}

\subsection{Trivial source modules and trivial source character tables}
We begin with a quick review of the major properties of trivial source modules. We refer the reader to \cite[\S 2]{BT10}, \cite{Bro85},  \cite[\S 3.11 and \S5.5]{BensonBookI}, \cite[\S 27]{TheBook} for proofs and details of the results mentioned in this section.
\medskip

Given $R\in\{\mathcal{O},k\}$, an $RG$-lattice $M$ is a \emph{permutation} $RG$-lattice if it admits a $G$-invariant $R$-basis, whereas it is an $\ell$-\emph{permutation} $RG$-lattice if it is a direct summand of a permutation $RG$-lattice. 
The indecomposable $\ell$-permutation $RG$-lattices are also called \emph{trivial source lattices} (\emph{trivial source modules} over $k$), because they coincide with the indecomposable $RG$-lattices having a trivial source. \par
It is well known that any trivial source $kG$-module $M$ is liftable to an $\cO G$-lattice (see e.g. \cite[Corollary 3.11.4]{BensonBookI}). More accurately, in general, such modules afford several lifts, but there is a unique one amongst these which is a trivial source $\cO G$-lattice. We denote this trivial source lift by $\wh{M}$ and we let $\chi_{\wh{M}}$ be the ordinary character afforded by  $K\otimes_{\cO}\wh{M}$.
Character values of trivial source lattices have the following properties.

\begin{lem}[{}{\cite[Lemma II.12.6]{LandrockBook}}]\label{lem:tscharacters}
	Let $M$ be a trivial source $kG$-module with character $\chi_{\wh{M}}$  and let $x$ be an $\ell$-element of $G$. Then:
	\begin{enumerate}[{\,\,\rm(a)}]\setlength{\itemsep}{2pt}
		\item $\chi_{\wh{M}}(x)$  is a non-negative integer,  equal to the number of indecomposable direct summands of $\Res^{G}_{\langle x \rangle}(M)$ isomorphic to the trivial $k{\langle x \rangle}$-module; and 
		\item $\chi_{\wh{M}}(x)\neq 0$ if and only if $x$ belongs to some vertex of $M$.
	\end{enumerate}
\end{lem}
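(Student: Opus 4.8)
The plan is to reduce everything to the restriction of the trivial source lift $\wh M$ to the cyclic $\ell$-group $\langle x\rangle$, to exploit that $\ell$-permutation lattices over a cyclic $\ell$-group are genuine permutation lattices, and then to read off both the character value and the trivial-summand multiplicity from a fixed-point count. Since the Brauer construction $M[Q]$ is already available, I will use it as the bridge to vertices in part~(b).

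First, for (a), I would observe that, $\wh M$ being a trivial source $\cO G$-lattice, the restriction $\Res^{G}_{\langle x\rangle}\wh M$ is an $\ell$-permutation $\cO\langle x\rangle$-lattice. As $\langle x\rangle$ is an $\ell$-group, every indecomposable $\ell$-permutation $\cO\langle x\rangle$-lattice has the form $\cO[\langle x\rangle/H]$ for some $H\le\langle x\rangle$, so I may write $\Res^{G}_{\langle x\rangle}\wh M\cong\bigoplus_i\cO[\langle x\rangle/H_i]$. Then $\chi_{\wh M}(x)$ is the sum of the permutation characters of the $\cO[\langle x\rangle/H_i]$ at $x$, i.e. the total number of cosets fixed by $x$. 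Because $x$ generates $\langle x\rangle$, it fixes a coset $gH_i$ exactly when $x\in H_i$, which forces $H_i=\langle x\rangle$; hence each summand contributes $1$ if $H_i=\langle x\rangle$ and $0$ otherwise. This already exhibits $\chi_{\wh M}(x)$ as the non-negative integer counting the indices $i$ with $H_i=\langle x\rangle$. Reducing modulo $\fp$ gives $\Res^{G}_{\langle x\rangle}M\cong\bigoplus_i k[\langle x\rangle/H_i]$, and $k[\langle x\rangle/H_i]$ is the trivial module precisely when $H_i=\langle x\rangle$; so the same count equals the number of trivial $k\langle x\rangle$-summands of $\Res^{G}_{\langle x\rangle}M$, proving~(a).

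For (b) I would first record the identity $\chi_{\wh M}(x)=\dim_k M[\langle x\rangle]$. Writing $\Res^{G}_{\langle x\rangle}M\cong\bigoplus_{H\le\langle x\rangle}k[\langle x\rangle/H]^{\oplus a_H}$ as above and computing the Brauer construction at $Q:=\langle x\rangle$, one finds $k[\langle x\rangle/H][Q]=0$ whenever $H<Q$ (the unique $Q$-fixed vector of $k[\langle x\rangle/H]$ is the relative trace $\tr_H^Q$ of a coset, hence dies in the quotient), while $k[\langle x\rangle/Q][Q]=k$. Since the Brauer construction at $Q$ depends only on the $kQ$-module structure, $M[Q]\cong(\Res^{G}_{\langle x\rangle}M)[Q]$ as $k$-spaces, whence $\dim_k M[Q]=a_Q$, which is exactly the multiplicity from~(a). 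Thus $\chi_{\wh M}(x)\neq 0$ if and only if $M[\langle x\rangle]\neq 0$. I would then invoke Brou\'e's vanishing criterion for the Brauer construction of $\ell$-permutation modules: for the indecomposable trivial source module $M$ with vertex $P$, one has $M[Q]\neq 0$ if and only if $Q$ is $G$-conjugate to a subgroup of $P$. Applied to $Q=\langle x\rangle$, and using that this is equivalent to $x$ lying in some $G$-conjugate of $P$, i.e. in some vertex of $M$, this gives $M[\langle x\rangle]\neq 0$ if and only if $x$ belongs to some vertex of $M$, which combined with the previous identity yields~(b).

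I expect the main obstacle to be the middle step, namely establishing cleanly that $\dim_k M[\langle x\rangle]$ equals the multiplicity of the trivial summand in $\Res^{G}_{\langle x\rangle}M$ — equivalently, that the Brauer construction at $Q$ detects exactly the trivial $kQ$-summands and annihilates the contributions of the proper transitive permutation modules $k[Q/H]$ with $H<Q$. Once this is in place, everything else is either the structure theory of permutation modules over a cyclic $\ell$-group or a citation of Brou\'e's criterion.
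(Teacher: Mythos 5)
Your proof is correct, and it is essentially the standard argument: the paper itself gives no proof of this lemma, citing \cite[Lemma II.12.6]{LandrockBook} instead, and your route (restriction of $\wh{M}$ to the cyclic $\ell$-group $\langle x\rangle$, indecomposability of the transitive permutation lattices $\cO[\langle x\rangle/H]$ over an $\ell$-group, fixed-point count, then Brou\'e's criterion $M[Q]\neq 0 \Leftrightarrow Q\leq_G P$ for part~(b)) is the same one found in the cited source and implicit in Proposition~\ref{prop:omnts}(c). The one step you flagged as a potential obstacle --- that the Brauer construction at $Q=\langle x\rangle$ kills $k[Q/H]$ for $H<Q$ and returns $k$ for $H=Q$ --- is handled exactly as you sketch, since the orbit sum spanning $k[Q/H]^{Q}$ is the relative trace $\tr_H^Q$ of a coset, so no gap remains.
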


\noindent Up to isomorphism, there are only finitely many trivial source $kG$-modules and we will study them vertex by vertex. Thus, we denote by $\TS(G;Q)$ the set of isomorphism classes of indecomposable trivial source $kG$-modules with vertex $Q$. Since projective indecomposable $kG$-modules are trivial source modules with vertex $\{1\}$, with this notation, $\TS(G;1)$ is the set of isomorphism classes of PIMs of $kG$. Moreover, we set $\overline{N}_{G}(Q):=N_{G}(Q)/Q$.
\par

\begin{prop}[{Omnibus properties of $\ell$-permutation and trivial source $kG$-modules}]{\ }
	\begin{enumerate}[{\,\,\rm(a)}]\setlength{\itemsep}{2pt}
		\item The $\ell$-permutation modules are preserved under direct sums, tensor products, inflation, restriction and induction.
		\item If $Q\leq G$ is an $\ell$-subgroup and $M$ is an $\ell$-permutation $kG$-module, then $M[Q]$ is an $\ell$-permutation $k\overline{N}_{G}(Q)$-module. 
		\item The vertices of a trivial source $kG$-module $M$ are the maximal $\ell$-subgroups $Q$ of $G$ such that $M[Q]\neq\{0\}$.
		\item A trivial source $kG$-module $M$ has vertex $Q$ if and only if $M[Q]$ is a non-zero projective $k\overline{N}_{G}(Q)$-module.
		Moreover, if this is the case, then the $kN_{G}(Q)$-Green correspondent $f(M)$ of $M$ is $M[Q]$ (viewed as a $kN_{G}(Q)$-module). Thus, there are \smallskip bijections
		\begin{center}
			\begin{tabular}{ccccc}
				$\TS(G;Q)$      & $\longleftrightarrow$ &    $\TS(N_{G}(Q);Q)$   &  $\longleftrightarrow$ & $\TS(\overline{N}_{G}(Q);1)$  \\
				$M$      & $\mapsto$     &   $f(M)$      &  $\mapsto$     & $M[Q]$
			\end{tabular}.
		\end{center}
		These sets are also in bijection with the set of $\ell'\text{-conjugacy classes of }\overline{N}_{G}(Q)$. 
		\item Let $H\leq G$. Then the Scott module $\Sc(G,H)$ is a trivial source $kG$-module lying in~$\bB_{0}(G)$.  If $Q\in\Syl_{\ell}(H)$, then $Q$ is a vertex of $\Sc(G,H)$ and $\Sc(G,H)\cong\Sc(G,Q)$. 
		\item The trivial source $kG$-modules, together with their vertices, are preserved under source-algebra equivalences. 
	\end{enumerate}
\end{prop}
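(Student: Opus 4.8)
The plan is to reduce everything to a single computation of the Brauer construction on genuine permutation modules and then to deploy Green correspondence. First I would record the basic fact that if $X$ is a finite $G$-set and $kX$ the associated permutation $kG$-module, then for an $\ell$-subgroup $Q\leq G$ the Brauer quotient $(kX)[Q]$ is canonically isomorphic, as a $k\overline{N}_{G}(Q)$-module, to the permutation module on the fixed-point set $X^{Q}$; the relative traces $\tr_{R}^{Q}$ land in the span of the non-fixed orbit sums, which quotient away. With this in hand, parts (a) and (b) become formal. For (a), restriction, inflation and induction each send a $G$-invariant basis to an invariant basis (using the Mackey/double-coset description for induction), and the tensor product of two permutation modules is the permutation module on the product $G$-set; passing to direct summands and using that the five operations commute with direct sums gives the statement for $\ell$-permutation modules. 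For (b), since the Brauer construction is additive, $M[Q]$ is a direct summand of $(kX)[Q]$ whenever $M\mid kX$, and the latter is a permutation $k\overline{N}_{G}(Q)$-module by the computation above.

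The substantive content is in (c) and (d), which I would treat together as the Brauer-construction description of vertices and Green correspondents. The key input is that for a trivial source module $M$ with vertex $P$ one has $M\mid\Ind_{P}^{G}(k)$, and a Mackey/Brauer-quotient calculation shows that $M[Q]\neq\{0\}$ exactly when $Q$ is $G$-conjugate into $P$; hence the maximal $\ell$-subgroups $Q$ with $M[Q]\neq\{0\}$ are precisely the vertices, giving (c). For (d), when $Q$ is a vertex I would show $M[Q]$ is non-zero and projective over $k\overline{N}_{G}(Q)$ by restricting the above analysis to $N_{G}(Q)$ and using that the Brauer quotient annihilates everything induced from proper subgroups of $Q$; conversely, non-zero projectivity of $M[Q]$ forces $Q$ to be maximal among $\ell$-subgroups not killed, i.e.\ a vertex. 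The identification $f(M)=M[Q]$ then follows from Green correspondence: $M[Q]$ is an indecomposable trivial source $kN_{G}(Q)$-module with vertex $Q$ lying over $M$, and uniqueness in the Green correspondence pins it down. The three bijections are the composite of Green correspondence $\TS(G;Q)\leftrightarrow\TS(N_{G}(Q);Q)$ with the Brauer quotient $\TS(N_{G}(Q);Q)\leftrightarrow\TS(\overline{N}_{G}(Q);1)$, the latter because $Q$ acts trivially on $M[Q]$ and trivial source modules of $\overline{N}_{G}(Q)$ with vertex $1$ are exactly the PIMs, indexed by the $\ell'$-classes of $\overline{N}_{G}(Q)$.

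For (e) I would use the characterisation of $\Sc(G,H)$ as the unique indecomposable summand of the permutation module $\Ind_{H}^{G}(k)$ whose top (equivalently socle) contains the trivial module; being a summand of a permutation module makes it trivial source, and since the trivial module lies in $\bB_{0}(G)$ so does $\Sc(G,H)$. For the vertex, if $Q\in\Syl_{\ell}(H)$ then $k\mid\Ind_{Q}^{H}(k)$ (as $[H:Q]$ is invertible in $k$), so by transitivity of induction $\Ind_{H}^{G}(k)\mid\Ind_{Q}^{G}(k)$ and the trivial summands match, whence $\Sc(G,H)\cong\Sc(G,Q)$; that $Q$ is a vertex then follows because $\Sc(G,Q)\mid\Ind_{Q}^{G}(k)$ forces the vertex into $Q$, while $\Sc(G,Q)[Q]\neq\{0\}$ shows via (c) that $Q$ is not smaller. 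Finally, (f) I would deduce from the fact that a source-algebra equivalence is implemented by tensoring with a bimodule that is itself a trivial source module over the relevant direct product, so it carries indecomposable trivial source modules to indecomposable trivial source modules and preserves vertices and sources; alternatively one invokes that such equivalences preserve the fusion system and the local module data detected by the Brauer construction.

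The hard part will be (d): making the identification of the Green correspondent with the Brauer quotient precise requires checking that $M[Q]$ at a vertex $Q$ is not merely non-zero but projective as a $k\overline{N}_{G}(Q)$-module and indecomposable as the correspondent, which is where the interplay between relative trace maps and the local structure is most delicate. Part (f) is conceptually the subtlest, since it rests on the precise definition of source-algebra equivalence and on which invariants it transports; for this I would lean on the cited references for the assertion that trivial source modules together with their vertices are preserved.
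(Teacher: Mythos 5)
Your proposal is correct and is, in substance, exactly the argument the paper relies on: the paper gives no proof of its own, citing \cite{Bro85} for (a)--(e) and \cite[(38.3) Proposition]{TheBook} for (f), and your sketch reconstructs precisely Brou\'e's line --- the identification $(kX)[Q]\cong k[X^{Q}]$ for a $G$-set $X$, the Brauer-quotient detection of vertices, Green correspondence for (d), and Scott-module theory for (e) --- together with the standard argument that a source-algebra equivalence is induced by a bimodule which is itself an $\ell$-permutation module, giving (f). The two points you flag as delicate (that $M[Q]\neq\{0\}$ for every $Q$ contained in a vertex, which follows since $k\mid\Res^{G}_{Q}(M)$ and the trivial summand survives the Brauer quotient because $\tr_{R}^{Q}$ acts on it by $[Q:R]\equiv 0 \pmod{\ell}$, and that $\Sc(G,Q)[Q]\neq\{0\}$) are exactly the ones handled in \cite{Bro85}, so nothing in your outline would fail.
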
\label{prop:omnts}

\noindent Statements {\rm(a)} to {\rm(e)} are proved in~\cite{Bro85}. Statement (f) is given by \cite[(38.3) Proposition]{TheBook} since source-algebra equivalences preserve vertices and sources. 

%%The bijection between $\TS(G;Q)$ and $\TS(N_{G}(Q);Q)$ is given by the Green correspondence. More precisely, if $M$ is an indecomposable $kG$-modul with vertex $Q$ and trivial source, then $Q$ acts trivially on the Green correspondent $N$ of $M$ in $N_G(Q)$. Thus $N$ may be viewed as a $k[N_G(Q)/Q]$-module, and as such, $N$ is indecomposable and projective.
%%Conversely, if $N$ is an indecomposable and projective $k[N_{G}(Q)/Q]$-module,  then $N$ may be viewed as a $kN_G(Q)$-module via inflation. As such, $N$ is indecomposable with vertex~$Q$, so its Green correspondent in $G$ is an indecomposable $kG$-module with vertex~$Q$ and trivial source. 

\begin{notadefn}[{}{\cite[\S 2]{BT10}}]\label{def:TSCT}{\ }
	\begin{enumerate}[\,\,1.]\setlength{\itemsep}{2pt}
		\item  Set $\mathcal{P}_{G,\ell}:=\{(Q,E)\mid Q\mbox{ is an $\ell$-subgroup of }G,\, E\in \TS(\overline{N}_{G}(Q);1) \}$. The group $G$ acts by conjugation on  $\mathcal{P}_{G,\ell}$ and we  let $[\mathcal{P}_{G,\ell}]$ denote a set of representatives of the $G$-orbits on $\mathcal{P}_{G,\ell}$.
		Then, given $(Q,E)\in [\mathcal{P}_{G,\ell}]$, we let $M_{(Q,E)}\in \TS(G;Q)$ denote the unique trivial source $kG$-module (up to isomorphism) such \smallskip that $M_{(Q,E)}[Q]\cong E$ given by Proposition~\ref{prop:omnts}(d). 
		\item The \emph{trivial source ring}  $a(kG,\mbox{Triv})$ of $kG$ is the Grothendieck group of the category of {$\ell$-permutation} $kG$-modules, with relations corresponding to direct sum decompositions, i.e. $[M]+[N]=[M\oplus N]$, and endowed with the multiplication induced by the tensor product  over~$k$. The identity element is the class of the trivial $kG$-module~$k$. Moreover, $a(kG,\text{Triv})$ is a finitely generated free abelian group with \smallskip basis $\mathcal{B}:=\{[M_{(Q,E)}]\mid (Q,E)\in [\mathcal{P}_{G,\ell}]\}$\,.
		\item Set $\mathcal{Q}_{G,\ell}:=\{(Q,s)\mid Q\mbox{ is an $\ell$-subgroup of }G,\, s\in \overline{N}_{G}(Q)_{{\ell'}} \}$. Again,  $G$ acts on $\mathcal{Q}_{G,\ell}$ by conjugation and we let  $[\mathcal{Q}_{G,\ell}]$ be a set of representatives of the $G$-orbits \smallskip on $\mathcal{Q}_{G,\ell}$. We have 
		$|[\mathcal{Q}_{G,\ell}]|=|[\mathcal{P}_{G,\ell}]|$. 
		\item Given $(Q,s)\in \mathcal{Q}_{G,\ell}$, there is a ring homomorphism  
		\begin{center}
			\begin{tabular}{cccl}
				$\tau_{Q,s}^{G}$\,:            &   $a(kG,\mbox{Triv})$      & $\lra$ &    $K$     \\
				&   $[M]$      & $\mapsto$     &   $\varphi^{}_{M[Q]}(s)$   
			\end{tabular}
		\end{center}
		mapping the class of an $\ell$-permutation $kG$-module $M$ to the value at $s$ of the Brauer character $\varphi^{}_{M[Q]}$ of the Brauer quotient $M[Q]$. It is easy to check that $\tau_{Q,s}^{G}$ only depends on the $G$-orbit of $(Q,s)$, that is,  $\tau_{Q^{x},s^{x}}^{G}= \tau_{Q,s}^{G}$ for every $x\in G$.  Moreoever, this ring homomorphism extends to a $K$-algebra homomorphism
		$$\hat{\tau}_{Q,s}^{G}: K\otimes_{\IZ}a(kG,\mbox{Triv})\lra K\,,$$
		and  the set $\{ \hat{\tau}_{Q,s}^{G}\mid (Q,s)\in  [\mathcal{Q}_{G,\ell}] \}$ is the set of all distinct species (= $K$-algebra homomorphisms) from $K\otimes_{\IZ}a(kG,\mbox{Triv})$ to $K$. These species induce a $K$-algebra isomorphism
		$$\prod_{(Q,s)\in [\mathcal{Q}_{G,\ell}]} \hat{\tau}_{Q,s}^{G}\,:  K\otimes_{\IZ}a(kG,\mbox{Triv})\,\lra \,\prod_{(Q,s)\in [\mathcal{Q}_{G,\ell}]}K\,.$$
		The matrix of this isomorphism with respect to the basis $\mathcal{B}$, denoted by $\text{Triv}_{\ell}(G)$, is called the  \textbf{trivial source character table} (or \textbf{species table})  of the group $G$ at the prime $\ell$ and  is a square matrix of size $|[\mathcal{Q}_{G,\ell}]|\times|[\mathcal{Q}_{G,\ell}]|$.
	\end{enumerate}
\end{notadefn}

\begin{conv}\label{conv:tsctbl}%
	For the purpose of computations we see the trivial source character table as follows, following the approach of \cite[Section 4.10]{LP}.  First, we fix  a set of representatives $Q_1,\ldots, Q_r$ for the conjugacy classes of $\ell$-subgroups of $G$ where $Q_{1}:=\{1\}$ and $Q_{r}\in\Syl_{\ell}(G)$, and for each $1\leq v\leq r$ we set $N_{v}:=N_{G}(Q_{v})$, $\overline{N}_{v}:=N_{G}(Q_{v})/Q_{v}$.   Then, for each $1\leq i,v\leq r$, we define a matrix
	$$T_{i,v}:=\big[ \tau_{Q_{v},s}^{G}([M])\big]_{M\in \TS(G;Q_{i}), s\in [\overline{N}_{v}]_{\ell'} }\,.$$
	With this notation, the trivial source character table of  $G$ at  $\ell$ is the block matrix 
	$$\text{Triv}_{\ell}(G):=[T_{i,v}]_{1\leq i,v\leq r}\,.$$
	Moreover, following  \cite{LP} we label the rows of $\text{Triv}_{\ell}(G)$ with the ordinary characters $\chi^{}_{\widehat{M}}$ instead of the isomomorphism classes of  trivial source modules $M$ themselves.
\end{conv}
\medskip

\noindent Two non-isomorphic trivial source modules $M$ and $N$ with vertex $Q_{i}$  may afford the same ordinary character. Therefore two rows of $\Triv_{\ell}(G)$ may be labelled with the same ordinary character. However, this labelling brings additional information about the trivial source modules and they are distinguished by the values in $T_{i,v}$ for some $v>1$.\\
%\newpage

\enlargethispage{8mm}

\begin{rem}{\ }\label{rem:tsctbl}
	\begin{enumerate}[{\,\,(a)}] \setlength{\itemsep}{2pt}
		\item The block $T_{i,i}$ consists of the values of projective indecomposable characters of $\overline{N}_{i}$ at the $\ell'$-conjugacy classes of $\overline{N}_{i}$. In particular, $T^{}_{1,1}$ consists of the values of projective indecomposable characters of $G$ at the $\ell'$-conjugacy classes of $G$. 
		\item The trivial $kG$-module $k$ is a trivial source module with vertex $Q_{r}$ and  $k=M_{(Q_{r},k)}$.  For every $1\leq v\leq r$ and every $s\in[\overline{N}_{v}]_{\ell'}$ we have $\tau_{Q_{v},s}^{G}(k)=1$ since  $k[Q_{v}]=k$. 
		\item For $\ell$-subgroups $Q_{i},Q_{v}$ of $G$, it follows immediately from the definition and  Proposition~\ref{prop:omnts}(c) that $\tau^{G}_{Q_{v},s}([M_{(Q_{i},E)}])=0$ unless $Q_{v}\leq_{G} Q_{i}$. In other words $T_{i,v}=\mathbf{0}$ if $Q_{v}\not\leq_{G} Q_{i}$. 
		\item If $v=1$ and $1\leq i\leq r$, then $M[\{1\}]=M$ and so 
		$$\tau_{\{1\},s}^{G}([M])=\chi^{}_{\widehat{M}}(s)$$
		for every $M\in \TS(G;Q_{i})$ and every $s\in [G]_{\ell'}$. In particular, for every $M\in \TS(G;Q_{i})$ we have
		$$\tau_{\{1\},1}^{G}([M])=\dim_{k}M\,.$$
		This explains the terminology \emph{trivial source character table} and our labelling of the rows by the characters. 
		\item More generally, if $(Q,s)\in \mathcal{Q}_{G,\ell}$ with $s=1$ and $M$ is any $\ell$-permutation $kG$-module, then  $\tau^{G}_{Q,1}([M])=\dim_{k} M[Q]$.
		
		\item Species values of $G$ may be computed using species values of smaller groups through the following formula \cite[2.16. Remarks]{BT10}
		$$\tau_{Q,s}^{G} =  \tau_{\{1\},s}^{\langle Qs \rangle /Q}\circ \Br_Q^{\langle Qs \rangle}\circ \Res^G_{\langle Qs \rangle},$$
		where $\langle Qs \rangle$ denotes the inverse image in $N_G(Q)$ of the cyclic group $\langle s \rangle$ of $\overline{N}_G(Q)$ and $\Br_Q^{\langle Qs \rangle}: A(k{\langle Qs \rangle},\mbox{Triv})\lra A(kN_{\langle Qs\rangle}(Q)/Q,\Triv)$ denotes the ring homomorphism induced by the correspondence $M\mapsto M[Q]$ for trivial source $k{\langle Qs \rangle}$-modules $M$. (See  \cite[2.11. Proposition]{BT10}.)
	\end{enumerate}
\end{rem}

%-------
\vspace{2mm}

\subsection{Blocks with cyclic defect groups.}\label{subsec:CyclicBlocks}
Many of the trivial source modules in the cases considered in this article lie in  blocks with cyclic defect groups. Thus we review here how to read  these modules from the Brauer tree of the block. 
We refer the reader to \cite{HL20} and the references therein for more details.\\ 

Assume $\bB$ is an $\ell$-block of $kG$ with a non-trivial cyclic defect group $D\cong C_{\ell^n}$. 
If $e$ denotes the inertial index of $\bB$ and $m:=\frac{|D|-1}{e}$ the exceptional multiplicity of~$\bB$, then $e\mid \ell-1$, there are $e$ simple $\bB$-modules and $e+m$ ordinary irreducible characters. We may write
$$\Irr(\bB)=\Irr'(\bB)\sqcup\{\chi_{\lambda} \mid \lambda\in\Lambda\}\,,$$
where $|\Irr'(\bB)|=e$ and $\Lambda$ is an index set with  $|\Lambda|:=m$. If $m>1$, the characters $\{\chi_{\lambda} \mid \lambda\in\Lambda\}$ are the exceptional characters of $\bB$, which  all restrict in the same way to the $\ell$-regular conjugacy classes of~$G$, and $\Irr'(\bB)$ is the set of the non-exceptional characters of $\bB$. 
We set $\chi_{\Lambda}:=\sum_{\lambda\in\Lambda}\chi_{\lambda}$ and $\Irr^{\circ}(\bB):=\Irr'(\bB)\sqcup\{\chi_{\Lambda}\}$. \par
The Brauer tree  $\sigma(\bB)$ of $\bB$ is  the graph whose 
vertices are labelled by the ordinary characters in $\Irr^{\circ}(\bB)$ and whose edges are labelled by the simple $\bB$-modules.  %$S_1,\ldots,S_e$. 
If $m>1$  the vertex corresponding to $\chi_{\Lambda}$ is called the \emph{exceptional vertex} and is indicated with a black circle in our drawings of $\sigma(\bB)$.  
%Furthermore, we assume that $\sigma(\bB)$ is given with a planar embedding, determined by specifying, for each vertex of $\sigma(\bB)$, a cyclic ordering of the edges adjacent to this vertex. 
%We use the convention that in a drawing of $\sigma(\bB)$ in the plane, the successor of an edge is the counter-clockwise neighbour of this edge. 
Furthermore, $\sigma(\bB)$ bears a \emph{type function} associating a sign to each vertex in an alternating way and which is defined as follows: if $D_1\leq D$ denotes the unique subgroup of $D$ of order $\ell$ and $x$ is a generator of $D_{1}$, then a vertex $\chi\in\Irr^{\circ}(\bB)$ of $\sigma(\bB)$ is said to be positive  if $\chi(x)>0$ and we write $\chi>0$, whereas it is said to be negative if $\chi(x)<0$ and in this case we write $\chi<0$. See \cite[\S4.2]{HL20}. Clearly the trivial character is always positive.
\par
In order to understand trivial source modules lying in $\bB$, it is necessary to consider $\bB$ up to source-algebra equivalence. By the work of Linckelmann \cite{Linck96}, the source algebra of $\bB$ is characterised by three parameters: $\sigma(\bB)$, the type function, and an indecomposable capped endo-permutation $kD$-module $W(\bB)$. Letting $\bb$ be the Brauer correspondent of $\bB$ in $N_G(D_1)$ and $\bc$ be a block of $C_G(D_1)$ covered by~$\bb$, the $kD$-module $W(\bB)$ is defined to be a source of the unique simple $\bc$-module.  
%(or equivalently, a source of the simple $\bb$-modules). 
Moreover, $\bB$ contains precisely $e$ trivial source $kG$-modules for each possible vertex $Q\leq D$. These trivial source modules are explicitly classified by~\cite[Theorem~5.3]{HL20}, as a function of the three parameters mentioned above. 

\begin{rem}[Classification of trivial source modules in cyclic $\ell$-blocks]{\label{rem:tsmodulecyclicdef}}The description of the trivial source $\bB$-modules can be split in three main cases.
	\begin{enumerate}[1.]
		\item \textbf{Trivial vertex:} The trivial source $\bB$-modules with vertex $\{1\}$ (i.e. the PIMs of~$\bB$) can be read off  from $\sigma(\bB)$ as follows (see \cite[\S 4.18]{BensonBookI}). If $S$ is a simple $\bB$-module corresponding to the edge 
		$$
		\begin{tikzcd}
			\cdots\,\underset{\chi_b}{{\Circle}} \arrow[r, dash,"S"] & \underset{\chi_c}{{\Circle}} \,\cdots
		\end{tikzcd}
		$$
		of $\sigma(\bB)$ with $\chi_{b},\chi_{c}\in\Irr^{\circ}(\bB)$, then  the projective cover $P_{S}$ of $S$ has the form
		$$P_{S}=\boxed{\begin{smallmatrix} S\\ V_b\oplus\, V_c \\ S\end{smallmatrix}}$$
		where $S=\soc(P_{S})=\head(P_{S})$ and the heart of $P_{S}$ is  $\rad(P_{S})/\soc(P_{S})=V_b\oplus V_c$ for two uniserial (possibly zero) $\bB$-modules $V_b$ and $V_c$.
		%The composition factors of $V_{b}$ and $V_{c}$, including their order of occurence, are precisely described in \cite[\S 4.18]{BensonBookI}.  
		The projective indecomposable character corresponding to $P_{S}$ is \smallskip $\chi^{}_{\wh{P(S)}}=\chi_{b}+\chi_{c}$.
		\item \textbf{Full vertex:} When $\ell$ is odd and $W(\bB)=k$ (the latter condition will always be satisfied in the cases considered), the trivial source $\bB$-modules with vertex $D$ are precisely the \emph{positive hooks} of $\bB$. The hooks of $\bB$ are by  definition  the liftable uniserial modules 
		$$H_b:=\boxed{\begin{smallmatrix}S\\V_b\end{smallmatrix}}\qquad\text{ and }\qquad H_c:=\boxed{\begin{smallmatrix}S\\V_c\end{smallmatrix}}$$
		where $S$ runs through the set of simple $\bB$-modules. When $W(\bB)=k$  one of $H_{b}$ and $H_{c}$ is a trivial source module and the other has $\Omega(k)$ as a source. %(and $\Omega(k)$ is a source of the other). 
		If $H_{b}$ is a trivial source module, then $\chi^{}_{\widehat{H_{b}}}=\chi_{b}>0$ so $H_{b}$ is a positive hook. Otherwise, $H_{c}$ is a positive hook. %trivial source module, then $\chi^{}_{\widehat{H_{c}}}=\chi_{c}>0$. 
		See~\cite[Theorem~5.3(a) and (b)(1)]{HL20}. %The classification of trivial source modules with vertex $D \cong C_2$ In Remark~\ref{rem:cyclicDefectC2C4} we describe the classification of trivial source modules with full vertex when $\ell=2$ and $D\cong C_{2}$, as this is the only  
		When $\ell=2$, the only situation of interest for this article is the classification of trivial source modules with full vertex $D \cong C_2$. This case is treated in Remark~\ref{rem:cyclicDefectC2C4}. %we describe the only case of interest to this article, namely the case $D\cong C_{2}$ in Remark~\ref{rem:cyclicDefectC2C4} below.
		\item We refer the reader directly to \cite[Theorem~5.3(a) and (b)(2)-(7)]{HL20} 
		for the description of the trivial source $\bB$-modules with non-trivial, non-full vertices,  and their ordinary characters. 
		%These are parametrised by their length when $e=1$, and by a \emph{path}, a \emph{multiplicity} $\mu$, and a \emph{direction} $\varepsilon$ when $e>1$. 
	\end{enumerate}
\end{rem}
%See also \cite[\S 17]{AlperinBook} and \cite[Chapters 1 \& 2]{HissLux}.

\begin{rem}\label{rem:cyclicDefectC2C4}%
	When $\ell=2$ and $D\cong C_{2}$, then clearly $e=m=1$, 
	%and $W(\bB)=k$ since the trivial $kD$-module is, up to isomorphism, the unique indecomposable capped endo-permutation $kD$-module. 
	the Brauer tree is of the form
	$$\sigma(\bB)=  \xymatrix@R=0.0000pt@C=30pt{	
		{\Circle} \ar@{-}[r]^{S}  & {\Circle}    \\
		{^{\chi^{}_{1}}}&{^{\chi^{}_2}}
	}$$
	and, up to isomorphism, $\bB$ contains precisely two indecomposable modules, both of which are trivial source modules. Clearly $P_{S}$ is the unique PIM and  $S$ is the unique trivial source $\bB$-module with vertex $D$. We have $\chi^{}_{\wh{P(S)}}=\chi^{}_{1}+\chi^{}_{2}$ and, by Lemma~\ref{lem:tscharacters}, $\chi_{\widehat{S}}=\chi^{}_{i}$ where $i\in\{1,2\}$ is such that $\chi^{}_{i}>0$. 
\end{rem}

\enlargethispage{3mm}

\begin{lem}
	\label{lem:equalgreencorrs}
	Let $\bB$ be an $\ell$-block of $kG$ with a non-trivial cyclic defect group $D\cong C_{\ell^{n}}$. 
	For each $0\leq i\leq n$, let $D_{i}\cong C_{\ell^{i}}$ be the unique subgroup of $D$ of order $\ell^{i}$. 
	Let $1\leq v\leq n$ and let $M$ be a trivial source $\bB$-module with vertex $D_{v}$. 
	If $N_{G}(D_{1})=N_{G}(D_{i})$ for each $1\leq i\leq n$, then for every $1\leq i\leq v$ we have  
	$$M[D_{i}]=M[D_{v}]=f(M),$$
	that is, the Green correspondent $f(M)$ of $M$ in $kN_{G}(D_{1})$.
\end{lem}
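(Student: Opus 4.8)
The plan is to reduce everything to the Green correspondent and then to compute a single Brauer quotient explicitly. Write $N := N_{G}(D_{1}) = N_{G}(D_{2}) = \cdots = N_{G}(D_{n})$, so that every $D_{i}$ is normal in $N$ and $\overline{N}_{G}(D_{i}) = N/D_{i}$. Since $N_{G}(D_{v}) = N$, Proposition~\ref{prop:omnts}(d) identifies the Green correspondent of $M$ in $kN$ with its Brauer quotient at its vertex, namely $f(M) = M[D_{v}]$ viewed as a $kN$-module via inflation. As $M$ is indecomposable, $U := M[D_{v}]$ is a projective indecomposable $k(N/D_{v})$-module and $f(M) = \Inf_{N/D_{v}}^{N}(U)$. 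Fixing $i$ with $1 \le i \le v$, the asserted equality $M[D_{i}] = M[D_{v}] = f(M)$ means, after inflating all three modules to $kN$, that $M[D_{i}] = \Inf_{N/D_{v}}^{N/D_{i}}(U)$; this is what I would prove.

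First I would show that $M[D_{i}] = f(M)[D_{i}]$. Because $N_{G}(D_{i}) = N$, forming the Brauer quotient at $D_{i}$ commutes with restriction to $N$, that is, $M[D_{i}] = (\Res^{G}_{N}M)[D_{i}]$ as $k(N/D_{i})$-modules, since the underlying fixed points and relative traces involve only $D_{i} \le N$. By the Green correspondence, $\Res^{G}_{N}M = f(M)\oplus X$, where every indecomposable summand of $X$ has a vertex contained (up to $N$-conjugacy) in some $D_{v}\cap{}^{g}D_{v}$ with $g\in G\setminus N$. The crucial point is that each such intersection is small: since $D$ is cyclic, $D_{i}$ is the unique subgroup of order $\ell^{i}$ of $D_{v}$, so ${}^{g^{-1}}D_{i}\le D_{v}$ would force ${}^{g^{-1}}D_{i}=D_{i}$ and hence $g\in N_{G}(D_{i})=N$; thus for $g\notin N$ we have $D_{i}\not\le{}^{g}D_{v}$ and therefore $D_{v}\cap{}^{g}D_{v}\le D_{i-1}$. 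Consequently no vertex of a summand of $X$ contains $D_{i}$, and hence $X[D_{i}]=0$ by Proposition~\ref{prop:omnts}(c); therefore $M[D_{i}] = f(M)[D_{i}]$.

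It then remains to compute $f(M)[D_{i}] = \bigl(\Inf_{N/D_{v}}^{N}U\bigr)[D_{i}]$. Since $D_{i}\le D_{v}$ acts trivially on $\Inf_{N/D_{v}}^{N}U$, the fixed points are the whole module, while for $R<D_{i}$ the relative trace $\tr_{R}^{D_{i}}$ acts as multiplication by $[D_{i}:R]$, a positive power of $\ell$, hence by $0$ in characteristic $\ell$; so the Brauer quotient equals $U$ as a $k$-space, with the $N/D_{i}$-action factoring through $N/D_{v}$. This yields $f(M)[D_{i}] = \Inf_{N/D_{v}}^{N/D_{i}}(U) = \Inf_{N/D_{v}}^{N/D_{i}}(M[D_{v}])$, which is exactly the desired identity $M[D_{i}] = M[D_{v}] = f(M)$. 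I expect the main obstacle to be the middle step: correctly identifying the error term $X$ of the Green correspondence and proving that it is annihilated by $-[D_{i}]$. This is precisely where the hypothesis $N_{G}(D_{1}) = \cdots = N_{G}(D_{n})$ and the cyclicity of $D$ are both genuinely used, and one must take care that the identification of $f(M)$ with $M[D_{v}]$ (and of each $M[D_{i}]$ with its inflation to $kN$) is kept consistent throughout.
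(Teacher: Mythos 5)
Your proof is correct, and while it shares the paper's overall skeleton---playing the Green correspondence decomposition $\Res^{G}_{N}M = f(M)\oplus X$ off against the Brauer construction at $D_{i}$---it replaces the paper's key input by a more elementary and more general argument. The paper's proof consists of two citations: by \cite[Exercise (27.4)]{TheBook}, $\Res^{G}_{N}M = L_{1}\oplus L_{2}$ where $L_{1}=M[D_{i}]$ is the sum of the summands whose vertices contain $D_{i}$; and by \cite[Lemma 6.5.1]{BensonBookI}---a statement specific to blocks with cyclic defect groups---$X$ is a projective module plus modules lying in blocks of $N$ with defect groups not conjugate to $D$, after which the two decompositions are compared. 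You instead control $X$ using only the general Green correspondence: its indecomposable summands are projective relative to the family $\{{}^{g}D_{v}\cap N \mid g\in G\setminus N\}$, and since $D_{i}$ is the \emph{unique} subgroup of order $\ell^{i}$ of the cyclic group $D$, any containment $D_{i}\le {}^{g}D_{v}$ forces $g\in N_{G}(D_{i})=N$; hence $X[D_{i}]=0$ by Proposition~\ref{prop:omnts}(c), and you finish by computing $f(M)[D_{i}]$ explicitly for the inflated projective module (which the paper's use of \cite[Exercise (27.4)]{TheBook} handles implicitly). Two remarks: first, you stated the relative-projectivity family for the restriction as $\{D_{v}\cap{}^{g}D_{v}\mid g\in G\setminus N\}$, which is the family attached to \emph{induction}; for restriction it is $\{{}^{g}D_{v}\cap N\mid g\in G\setminus N\}$---but this slip is harmless, since your argument only uses the implication $D_{i}\le{}^{g}D_{v}\Rightarrow g\in N$, which disposes of both families equally. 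Second, your route genuinely buys something: it nowhere uses that $M$ lies in a block with cyclic defect group, only that the vertex is cyclic and that $N_{G}(D_{1})=N_{G}(D_{i})$ for all $i$, so your statement holds in greater generality; conversely, the paper's proof is shorter given the quoted results.
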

\begin{proof}
	Because $M$ is a trivial source module, by \cite[Exercise (27.4)]{TheBook}, we may write 
	$$\Res^{G}_{N_{G}(D_{i})}=L_{1}\oplus L_{2}$$
	where $L_{1}=M[D_{i}]$ is the direct sum of all direct summands with a vertex containing $D_{i}$ and every indecomposable direct summand of $L_{2}$ has vertex not containg $D_{i}$.
	Now, as $M$ belongs to a block with cyclic defect group, we also know that
	$$\Res^{G}_{N_{G}(D_{i})}=f(M)\oplus X$$
	where $X$ is the direct sum of a projective $kN_{G}(D_{1})$-module and $kN_{G}(D_{1})$-modules lying in blocks with defect groups not conjugate to $D$ under $N_{G}(D_{1})$. (See e.g. \cite[Lemma 6.5.1]{BensonBookI}.)
	Comparing both decompositions yields the claim. 
\end{proof}

%--------------------------------------------------------------------------------------------------------
%------------------------------------------ BACKGROUND    --------------------------------------
%------------------------------------------------------------------------------------------------------
%-------

%-------
\vspace{2mm}

\subsection{The special linear group $\SL_2(q)$ and its $\ell$-subgroups}
For the remainder of this article, we assume that  $G := \SL_2(q)$ is the special linear group of degree $2$ over the finite field $\IF_{q}$ where~$q$ is assumed to be a positive power of an odd prime $p$. We write $Z:=Z(G)=\{\pm I_{2}\}$ and set $\overline G :=G/Z= \PSL_2(q)$.
We let $n \geq 1$ denote the maximal power of~$\ell$ dividing $|G| = q(q-1)(q+1)$, that is, $|G|_{\ell}=\ell^{n}$.
\par
For details on the structure of $\SL_{2}(q)$ and its block theory, we refer the reader to \cite{BonBook}. For convenience we recall here the notation and important preliminaries from  \cite{BonBook}, which we will use throughout.  We let $U$ be the subgroup of $G$ consisting of the unipotent upper-triangular matrices. 
We let $T$ and $T'$ denote a split and a non-split torus of $G$, respectively. These are cyclic groups of order $q-1$ and $q+1$, respectively, and we have isomorphisms 
\[ \mathbf{d}: \mu_{q-1}=\IF_{q}^{\times} \longrightarrow T, a\mapsto \text{diag}(a,a^{-1})\,, \qquad \qquad \mathbf{d'}: \mu_{q+1} \longrightarrow T',\]
where  $\mathbf{d'}:\GL_{\IF_{q}}(\IF_{q^{2}})\lra\GL_{2}(\IF_{q})$ is an isomorphism defined and  fixed by the choice of an $\IF_{q}$-basis of $\IF_{q^{2}}$ and $T'$ is defined to be $\mathbf{d'}( \mu_{q+1} )$. We will often identify $T$ with $\mu_{q-1}$ and $T'$ with $\mu_{q+1}$ via these isomorphisms. 
%(see \cite[Sections 1.1.1 and 1.1.2]{BonBook}). 

We let $S_\ell$ and $S_{\ell}'$ denote Sylow $\ell$-subgroups of $T$ and $T'$ respectively. Then the tori decompose into direct products
$ T = S_{\ell} \times T_{\ell'}$, and $ T' = S_{\ell}' \times T_{\ell'}'$.  
If $q = 3$ then $T = Z$ so $C_G(T) = N_G(T) = G$. Otherwise, $N_G(T) =  \langle T, \sigma \rangle=:N$, where $\sigma := \left( \begin{smallmatrix} 0 & -1 \\ 1 & 0 \end{smallmatrix} \right)$
is an element of $G$ of order 4. 
Similarly, $N':=N_{G}(T')= \langle T', \sigma' \rangle$, where $\sigma' \in G$ is an element  of order 4 such that $\sigma'^2 = -I_2$.

\begin{lem}[{Some $\ell$-subgroups of $G$} {\cite[Theorem 1.4.3]{BonBook}}]{\ }
	\label{lem:omnibus}\label{lem:omnibusl=2}
	\begin{enumerate}[{\,\,\rm(a)}] \setlength{\itemsep}{2pt}
		\item[\rm(a)] If $\ell$ is odd, then the following assertions hold.	
		\begin{itemize}
			\item[\rm(i)]  If $\ell \mid q-1$ then $S_\ell\in\Syl_{\ell}(G)$ and for any non-trivial $\ell$-subgroup $Q$ of $T$, $C_{G}(Q) = T$ and $N_{G}(Q) = N$.
			\item[\rm(ii)]  If $\ell \mid q+1$ then	$S_\ell'\in\Syl_{\ell}(G)$ and for any non-trivial $\ell$-subgroup $Q$ of $T'$, $C_{G}(Q) = T'$ and $N_{G}(Q) = N'$.
		\end{itemize}
		\item[\rm(b)]  If  $\ell=2$ and $q\equiv 3\pmod{8}$, then the following assertions hold.
		\begin{itemize}
			\item[\rm(i)] $S_{2}=Z\cong C_{2}$, $S_{2}'\cong C_{4}$, and  $Q:=\langle S_{2}',\sigma'\rangle\cong \mathcal{Q}_{8}$ is a Sylow $2$-subgroup of $G$. Moreover, 
			$$N_{G}(Q)/Q\cong C_{3},\quad N_{G}(S_{2})=G,\quad N_{G}(S_{2}')=N'\,.$$
			\item[\rm(ii)]  A Sylow $2$-subgroup of $\overline{G}$ is $\overline{Q}:=Q/Z\cong C_{2}\times C_{2}$. Moreover,
			$$N_{\overline{G}}(\overline{Q})\cong \fA_{4},\quad  N_{\overline{G}}(S_{2}'/Z)= \langle T'/Z,\sigma'Z\rangle \cong \mathcal{D}_{q+1}.$$
		\end{itemize} 
		\item[\rm(c)]  If  $\ell=2$ and $q\equiv -3\pmod{8}$, then the following assertions hold.
		\begin{itemize}
			\item[\rm(i)]  $S_{2}\cong C_{4}$, $S_{2}'=Z\cong C_{2}$, and  $Q:=\langle S_{2},s\rangle\cong \mathcal{Q}_{8}$ is a Sylow $2$-subgroup of $G$. Moreover, 
			$$N_{G}(Q)/Q\cong C_{3},\quad N_{G}(S_{2})=N, \quad N_{G}(S_{2}')=G\,.$$
			\item[\rm(ii)] A Sylow $2$-subgroup of $\overline{G}$ is $\overline{Q}:=Q/Z\cong C_{2}\times C_{2}$. Moreover,
			$$N_{\overline{G}}(\overline{Q})\cong \fA_{4}, \quad  N_{\overline{G}}(S_{2}/Z)= \langle T/Z,sZ\rangle \cong \mathcal{D}_{q-1}.$$
		\end{itemize} 
	\end{enumerate}
\end{lem}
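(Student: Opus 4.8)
The plan is to derive all three parts from the explicit description of the maximal tori $T \cong C_{q-1}$ and $T' \cong C_{q+1}$, their normalizers $N = \langle T, \sigma\rangle$ and $N' = \langle T', \sigma'\rangle$, and the fact that the Weyl elements $\sigma,\sigma'$ act on the respective torus by inversion, all of which are recorded in \cite{BonBook}. The first step is purely numerical. Since $q = p^f$ with $p$ odd, the two factors $q-1$ and $q+1$ of $|G| = q(q-1)(q+1)$ are coprime to $q$ and have greatest common divisor $2$, so once $\ell$ is fixed the full $\ell$-part of $|G|$ is carried by exactly one of them. For $\ell$ odd dividing $q-1$ (resp. $q+1$) this gives $|G|_\ell = |S_\ell|$ (resp. $|S_\ell'|$) at once, hence the Sylow statements in (a); for $\ell=2$ I would read $v_2(q-1)$ and $v_2(q+1)$ off $q \equiv \pm 3 \pmod 8$, obtaining $|G|_2 = 8$ together with the orders $2$ and $4$ of $S_2, S_2'$ claimed in (b)(i) and (c)(i).

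For the centralizer and normalizer equalities I would use that a non-trivial $\ell$-element of a maximal torus which is not an involution is regular semisimple. Concretely, in (a)(i) any non-trivial $\ell$-subgroup $Q \leq T$ (with $\ell$ odd) contains some $\text{diag}(a, a^{-1})$ with $a^2 \neq 1$, whose two eigenvalues are distinct, so its centralizer is the full diagonal torus: $C_G(Q) = T$. Then $N_G(Q) \leq N_G(C_G(Q)) = N_G(T) = N$, while $N$ normalizes $Q$ because $\sigma$ inverts $T$ and hence stabilizes every subgroup of $T$; thus $N_G(Q) = N$. The same argument with $T',\sigma'$ gives (a)(ii), and its $\ell=2$ analogues give $C_G(S_2') = T'$, $N_G(S_2') = N'$ in (b)(i) and $C_G(S_2) = T$, $N_G(S_2) = N$ in (c)(i), the relevant generators of $S_2', S_2$ having order $4$ and so being regular. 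The equalities $N_G(S_2) = G$ in (b) and $N_G(S_2') = G$ in (c) are immediate, since there $S_2 = Z$ (resp. $S_2' = Z$) is central.

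The heart of the argument is the quaternion Sylow subgroup and its $C_3$-normalizer quotient. I would first identify $Q$ as $\mathcal{Q}_8$ from a presentation: $S_2'$ (resp. $S_2$) is cyclic of order $4$, the element $\sigma'$ (resp. $\sigma$) has order $4$ with $\sigma'^2 = -I_2$ equal to the unique involution of that $C_4$, and $\sigma'$ inverts it; these are exactly the defining relations of $\mathcal{Q}_8$, and $|Q| = 8 = |G|_2$ forces $Q \in \Syl_2(G)$. For $N_G(Q)/Q \cong C_3$ I would compute $C_G(Q) = Z$ (an element of $T'$ commuting with the inverting $\sigma'$ is fixed under inversion, hence has order dividing $2$ and lies in $Z$), so that $N_G(Q)/Z$ embeds into $\Aut(\mathcal{Q}_8) \cong S_4$ with the image of $Q/Z$ being the normal Klein-four subgroup; identifying $N_G(Q)/Z \cong \fA_4$ then yields $N_G(Q)/Q \cong C_3$, equivalently $N_G(Q) \cong \SL_2(3)$. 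This last identification is the step I expect to be the \emph{main obstacle}, and it is precisely the content I would draw from \cite[Theorem 1.4.3]{BonBook}; carrying it out by hand amounts to ruling out that the image in $S_4$ is only the Klein-four, i.e. exhibiting the order-$3$ element cyclically permuting the three cyclic subgroups of order $4$ in $\mathcal{Q}_8$.

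Finally, all assertions over $\overline{G} = G/Z$ follow by pushing the above through the quotient map, using $Z = Z(\mathcal{Q}_8) \leq Q$. Then $\overline{Q} = Q/Z \cong \mathcal{Q}_8/Z(\mathcal{Q}_8) \cong C_2 \times C_2$ and $|\overline{G}|_2 = 4$ make $\overline{Q}$ a Sylow $2$-subgroup; $N_{\overline{G}}(\overline{Q})$ is the image of $N_G(Q) \cong \SL_2(3)$, namely $\SL_2(3)/Z \cong \PSL_2(3) \cong \fA_4$; and $N_{\overline{G}}(S_2'/Z)$ (resp. $N_{\overline{G}}(S_2/Z)$) is the image of $N'$ (resp. $N$), that is $\langle T'/Z, \sigma' Z\rangle$ (resp. $\langle T/Z, sZ\rangle$), which is dihedral of order $q+1$ (resp. $q-1$) because $T'/Z$ is cyclic of order $(q+1)/2$ and is inverted by $\sigma'$. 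Here I would check that no normalizing coset is lost or gained under the quotient, i.e. that the full preimage of $N_{\overline{G}}(\overline{Q})$ is exactly $N_G(Q)$ and likewise for the tori; this holds because $Z \leq Q$ and $Z \leq S_2', S_2$, so taking preimages of $\overline{Q}$, $S_2'/Z$, $S_2/Z$ recovers $Q$, $S_2'$, $S_2$ and conjugation commutes with the quotient.
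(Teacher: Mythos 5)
Your proposal is correct, but it takes a genuinely different route from the paper: the paper's entire proof is a citation, namely that part (a) is \cite[Theorem 1.4.3 (a),(b)]{BonBook} (with the remark that Bonnaf\'e's argument, stated there only for $Q=S_\ell$ or $S'_\ell$, goes through verbatim for any non-trivial $\ell$-subgroup of the torus), that (b) and (c) are \cite[Theorem 1.4.3 (c),(d) and their proofs]{BonBook}, and that the $\overline{G}$-statements ``follow immediately''. You instead reconstruct the arguments by hand, and in doing so you actually substantiate the two points the paper waves at: your regular-semisimple-element argument ($Q\leq T$ non-trivial with $\ell$ odd contains some $\mathbf{d}(a)$ with $a^2\neq 1$, whence $C_G(Q)=T$ and $N_G(Q)\leq N_G(C_G(Q))=N$, with $N\leq N_G(Q)$ because $\sigma$ inverts the cyclic group $T$) is exactly the justification for the paper's ``holds more generally'' claim, and your preimage bookkeeping (using $Z\leq Q, S_2, S_2'$ so that normalizers pass up and down the quotient without loss) is the content behind ``follow immediately''. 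You are also honest about the one genuinely non-formal step, $N_G(Q)/Q\cong C_3$, which you reduce via $C_G(Q)=Z$ and the embedding $N_G(Q)/Z\hookrightarrow\Aut(\mathcal{Q}_8)\cong S_4$ to exhibiting the order-$3$ element, deferring that to the cited theorem -- which is no weaker than the paper, since the paper defers all of (b),(c) to Bonnaf\'e's proofs. One small refinement you could add to make your $S_4$-counting airtight: since $Q\in\Syl_2(G)$ is in particular a Sylow $2$-subgroup of $N_G(Q)$, the quotient $N_G(Q)/Q$ has odd order, so the image of $N_G(Q)/Z$ in $S_4$ has order $4m$ with $m$ odd and, containing the inner $V_4$, must be $V_4$ or $\fA_4$; this rules out the intermediate possibilities (e.g.\ a dihedral image of order $8$, or all of $S_4$) that your phrase ``ruling out that the image is only the Klein-four'' tacitly skips. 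In sum, your approach buys a self-contained verification at the cost of length, while the paper's buys brevity by outsourcing everything to \cite{BonBook}.
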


\begin{proof}
	Part (a) is proved in \cite[Theorem 1.4.3 (a),(b)]{BonBook} for $Q = S_{\ell}$ or $S_{\ell}'$, but the proof holds more generally for any non-trivial $\ell$-subgroup $Q$ of $T$ when $\ell \mid q-1$, and for any non-trivial  $\ell$-subgroup $Q$ of $T'$ when $\ell \mid q+1$. 
	The claims about $G=\SL_{2}(q)$ in (b) and (c) are given by \cite[Theorem 1.4.3 (c), (d) and their proofs]{BonBook}. The claims about $\overline{G}=G/Z$ follow immediately. 
\end{proof}

\begin{lem}\label{lem:B_0}\label{NG(D_1)}
	Assume $\ell$ is odd and let $\bB$ be an $\ell$-block of $kG$ for $G=\SL_{2}(q)$  ($q$ odd)  with a non-trivial cyclic defect group $D\cong C_{\ell^{n}}$. Then the indecomposable capped endo-permutation $kD$-module $W(\bB)$ parametrising the source algebra of $\bB$ (in the sense of \S\ref{subsec:CyclicBlocks}) is the trivial $kD$-module. 
\end{lem}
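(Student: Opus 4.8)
The plan is to unwind the definition of $W(\bB)$ from \S\ref{subsec:CyclicBlocks} and to exploit the fact that, for $\SL_{2}(q)$, the centraliser $C_{G}(D_{1})$ of the order-$\ell$ subgroup $D_{1}\leq D$ is an abelian torus. Recall that $W(\bB)$ is defined to be a source of the unique simple module $S$ of a block $\bc$ of $C_{G}(D_{1})$ covered by the Brauer correspondent $\bb$ of $\bB$ in $N_{G}(D_{1})$. Since $n$ is the full $\ell$-adic valuation of $|G|$, the condition $D\cong C_{\ell^{n}}$ says that $D$ is a Sylow $\ell$-subgroup of $G$. I assume, as holds in every application of this lemma, that $\ell\neq p$; then $\ell\mid q^{2}-1$ and, being odd, $\ell$ divides exactly one of $q-1$ and $q+1$. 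I would treat $\ell\mid q-1$ in detail, the case $\ell\mid q+1$ being identical after replacing $T,S_{\ell},N$ by $T',S_{\ell}',N'$.

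First I would pin down $C_{G}(D_{1})$. By Lemma~\ref{lem:omnibus}(a)(i), $S_{\ell}\in\Syl_{\ell}(G)$, so up to conjugacy $D=S_{\ell}\leq T$; in particular $D_{1}$ is a non-trivial $\ell$-subgroup of $T$, whence $C_{G}(D_{1})=T$ by the same lemma. Thus $\bc$ is a block of the abelian (indeed cyclic) group $T$ of order $q-1$. Since $T$ is abelian and $k$ is a splitting field, every simple $kT$-module is one-dimensional, each block of $kT$ has a unique simple module, and the defect group of any such block is the Sylow $\ell$-subgroup $S_{\ell}=D$ of $T$. Hence $\bc$ is nilpotent with defect group $D$, and its unique simple module $S$ is one-dimensional with vertex $D$.

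It then remains to compute the source. By definition, $W(\bB)$ is an indecomposable summand of $\Res^{T}_{D}S$. As $\dim_{k}S=1$, this restriction is a one-dimensional $kD$-module; since $D$ is an $\ell$-group and $\operatorname{char}k=\ell$, the group $k^{\times}$ has no non-trivial $\ell$-element, so every one-dimensional $kD$-module is trivial. Therefore $\Res^{T}_{D}S=k$ and $W(\bB)=k$, as claimed.

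The argument is mostly bookkeeping once $C_{G}(D_{1})$ has been identified, so I do not expect a serious obstacle. The one point needing care is the consistency check that the defect group $D$ of $\bB$ coincides with the Sylow $\ell$-subgroup of the abelian group $C_{G}(D_{1})=T$ in which the source is formed; this is exactly what the hypothesis $D\cong C_{\ell^{n}}$ (full defect) secures, together with the fact that for $\ell\mid q-1$ the entire $\ell$-part of $|G|=q(q-1)(q+1)$ sits inside $T$.
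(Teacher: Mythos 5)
Your proof is correct and takes essentially the same route as the paper's: both rest on Lemma~\ref{lem:omnibus} to identify $C_{G}(D_{1})$ as the torus $T$ (resp.\ $T'$) containing $D$, and then conclude that $D$ acts trivially on the unique simple $\bc$-module, so that its source $W(\bB)$ is trivial. The only cosmetic difference is that the paper phrases the final step via Clifford theory (the restriction of a simple $kC_{G}(D_{1})$-module to the central $\ell$-subgroup $D$ is semisimple, hence trivial) rather than via one-dimensionality of simple modules of the abelian group $T$, which lets it avoid your case split between $\ell \mid q-1$ and $\ell \mid q+1$ and the (harmless) side assumption $\ell \neq p$.
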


\begin{proof}
	Let $D_{1}$ be the unique subgroup of order $\ell$ of $D$. Then $C_G(D)=C_G(D_1)$ by Lemma~\ref{lem:omnibus}. Since $D\unlhd C_G(D)$ it follows from Clifford theory that $D$ acts trivially on the simple $kC_G(D_{1})$-modules, because their restriction to $D$ is semisimple. Thus the simple $kC_G(D_{1})$-modules are trivial source modules. Now, by definition (see~\S\ref{subsec:CyclicBlocks}) $W(\bB)$ is a source of such a simple module, hence $W(\bB)=k$. 
\end{proof}

%----------
\vspace{2mm}
\subsection{The ordinary character table of $\SL_{2}(q)$}
For convenience, we reproduce the character table of $G=\SL_{2}(q)$ here, using the notation of \cite[Table 5.4]{BonBook} up to small changes.
\par
We set  $\Gamma :=  [(\mu_{q-1} \setminus \{\pm 1\})/\equiv]$, $\Gamma' :=  [(\mu_{q+1} \setminus \{\pm 1\})/\equiv]$ and  write $\Gamma_{\ell'}$, $\Gamma'_{\ell'}$ for the $\ell'$-elements in $\Gamma$ and $\Gamma'$ respectively.  We fix a non-square element $z_0 \in \mathbb{F}_q^\times$ (this is possible as $q$ is odd), and set $ u = \left( \begin{smallmatrix} 1	& 1 \\ 0 & 1 \end{smallmatrix} \right)$ and $u_- = \left(\begin{smallmatrix} 1	& z_0 \\ 0 & 1 \end{smallmatrix} \right)$. Then a set of representatives of the conjugacy classes of $G$ is given by 
\[ \{I_2, -I_2\} \cup \{u_+, u_-, -u_+, -u_-\}\cup \{ \mathbf{d}(a) \mid a \in \Gamma \} \cup \{\mathbf{d}'(\xi) \mid \xi \in \Gamma' \}. \]
Next, we recall that all ordinary irreducible characters of $G$ arise as constituents of characters induced from $T$ by Harish-Chandra induction or induced from $T'$ by Deligne-Lusztig induction, which we denote by 
\begin{align*}
	R : \IZ \Irr(T) \longrightarrow \IZ \Irr(G)\qquad \mbox{ and }\qquad R' : \IZ \Irr(T') \longrightarrow \IZ \Irr(G), 
\end{align*}
respectively. Then, we denote the trivial character of $G$ by $1_G$ and the Steinberg character by~$\St$. We let $\alpha_0$ (respectively $\theta_0$) denote the unique character of $T$ (respectively $T'$) of order 2. Then 
\[R(\alpha_0) = R_+(\alpha_0) + R_-(\alpha_0) \qquad \text{ and }\qquad R'(\theta_0) = R'_{+}(\theta_0) + R'_{-}(\theta_0),\]
where $R_{\pm}(\alpha_0), R'_{\pm}(\theta_0)\in\Irr(G)$. The remaining $q-3$ non-trivial characters $\alpha \in \Irr(T)$ satisfy $R(\alpha) = R(\alpha^{-1}) \in \Irr(G)$, giving us $\frac{q-3}{2}$ more irreducible characters in $\Irr(G)$. Similarly, the remaining $q-1$ non-trivial characters $\theta \in \Irr(T')$ satisfy $R'(\theta) = R'(\theta^{-1}) \in \Irr(G)$, giving us the final $\frac{q-1}{2}$ irreducible characters of $G$. In summary, we have 
\begin{align*} 
	\Irr(G) = & \hspace{1ex}  \{ 1_G, \St, R_\pm(\alpha_0), R'_\pm(\theta_0) \}  \hspace{2ex}  \cup   \hspace{2ex}  \{ R(\alpha) \mid \alpha \in [T^\wedge/\equiv], \alpha^2 \neq 1\} \\
	& \hspace{6ex}  \cup \hspace{2ex}  \{ R'(\theta) \mid \theta \in [T'^\wedge / \equiv], \theta^2 \neq 1\}.
\end{align*}
With this notation the character table of $G=\SL_{2}(q)$ is then as given in Table~\ref{tab:CTBLsl2}, where  we let $q_0 := q$ if $q \equiv 1\pmod{4}$, and $q_0 := -q$ if $q \equiv 3\pmod{4}$.

\renewcommand*{\arraystretch}{1.4}
\begin{longtable}{|c||c|c|c|c|}
	\caption{Character table of $\SL_2(q)$.}\label{tab:CTBLsl2}	
	\\  \hline 
	\begin{tabular}{c}
		\textbf{Conjugacy class } \\
		\textbf{representative $g$}
	\end{tabular}
	&
	\begin{tabular}{c}
		\textbf{$\varepsilon I_2$ } \\
		\textbf{	$ \varepsilon \in \{ \pm 1\}$}
	\end{tabular}
	& 
	\begin{tabular}{c}
		\textbf{$\textbf{d}(a)$} \\
		\textbf{	$ a \in \Gamma$}
	\end{tabular}
	&
	\begin{tabular}{c}
		\textbf{$\textbf{d}'(\xi)$} \\
		\textbf{	$ \xi  \in \Gamma'$}
	\end{tabular}
	&
	\begin{tabular}{c}
		\textbf{$ \varepsilon u_{\tau}$} \\
		\textbf{	$ \varepsilon, \tau \in \{ \pm 1\}$}
	\end{tabular}
	\\ \hline \hline 			
	
	No. of classes 
	& 2
	& $\frac{(q-1) - 2}{2}$
	& $\frac{(q+1) - 2}{2}$
	& 4
	\\ \hline 
	
	Order of $g$
	& $o(\varepsilon)$
	& $o(a)$
	& $o(\xi)$
	& $p.o(\varepsilon)$
	\\ \hline 
	
	Class size
	& 1
	& $q(q+1)$
	& $q(q-1)$
	& $\frac{q^2 -1}{2}$
	\\ \hline 
	
	$C_G(g)$
	& $G$
	& $T$
	& $T'$
	& $Z \times U$
	\\ \hline \hline 
	
	$1_G$
	& $1$
	& $1$
	& $1$
	& $1$
	\\ \hline

	$\St$
	& $q$
	& $1$
	& $-1$
	& $0$
	\\ \hline
	
	$R(\alpha)$, $\alpha^2 \neq 1$
	& $(q+1) \alpha(\varepsilon)$
	& $\alpha(a) + \alpha(a)^{-1}$
	& $0$
	& $\alpha(\varepsilon)$
	\\ \hline
	
	$R'(\theta)$, $\theta^2 \neq 1$
	& $(q-1)  \theta(\varepsilon)$
	& $0$
	& $-\theta(\xi) - \theta(\xi)^{-1}$
	& $-\theta(\varepsilon)$
	\\ \hline

	$R_{\pm} (\alpha_0)$ 
	& $\frac{(q+1)\alpha_0(\varepsilon)}{2} $
	& $\alpha_0(a)$
	& $0$
	& $\alpha_0(\varepsilon) \frac{1 \pm \tau \sqrt {q_0}}{2}$
	\\ \hline

	$R_{\pm}' (\theta_0)$ 
	& $\frac{(q-1)\theta_0(\varepsilon)}{2}$
	& $0$
	& $-\theta_0(\xi)$
	& $\theta_0(\varepsilon) \frac{-1 \pm \tau \sqrt {q_0}}{2}$
	\\ \hline
\end{longtable}

\noindent\textbf{Convention.} We  identify the $\IC$-characters of $\overline{G} = \PSL_{2}(q)$ with the $\IC$-characters of $G = \SL_{2}(q)$ with the centre $Z$ in their kernel. As a consequence, when $\ell=2$ we  label the $\IC$-characters and the $\ell$-blocks of $\PSL_{2}(q)$ using the corresponding labelling in $\SL_{2}(q)$.

\vspace{2mm}
\subsection{Characters and conjugacy classes of $N$ and $N'$}
\label{subsec:NandN'}

We use the notation for the ordinary characters and conjugacy classes of $N$ and $N'$ given in \cite[Sections 6.2.1 and 6.2.2]{BonBook} and let 
\[\Irr(N) = \{1_N, \varepsilon, \chi^\pm_{\alpha_0}\} \cup \{ \chi_{\alpha} \mid \alpha \in [T^\wedge / \equiv], \alpha^2 \neq 1\} ,\]
\[\Irr(N') = \{1_{N'}, \varepsilon', \chi'^\pm_{\alpha_0}\} \cup \{ \chi'_{\theta} \mid \theta \in [T'^\wedge / \equiv], \theta^2 \neq 1\} .\]
Let $\sigma_+ := \sigma$ and let $\sigma_- := \sigma \mathbf{d}(z_1)$ for some non-square element $z_1 \in \IF_q^\times$. Let $\sigma'_+ := \sigma'$ and $\sigma'_- := \sigma \mathbf{d'}(\xi_0)$ for some non-square element $\xi_0 \in \mu_{q+1}$. Then a set of representatives of the conjugacy classes of $N$ is given by 
\[ \{I_2, -I_2\} \cup \{\sigma_+, \sigma_-\}\cup \{ \mathbf{d}(a) \mid a \in \Gamma \},  \] 
and a set of representatives of the conjugacy classes of $N'$ is given by
\[ \{I_2, -I_2\} \cup \{\sigma'_+, \sigma'_-\}\cup \{ \mathbf{d'}(\xi) \mid \xi \in \Gamma' \}.  \] 
We refer the reader to \cite[Tables 6.2 and 6.3]{BonBook} for the character tables of $N$ and $N'$.

	%%%--------------------------------------------------------------------------------------------------------
%%%-----------------------                       Section                    ------------------------------
%%%-------------------------------------------------------------------------------------------------------

\vspace{6mm}
\section{$\SL_2(q)$ with  $2\neq\ell\mid (q-1)$}
\label{sec:lmidq-1}

In this section we assume that $G = \SL_2(q)$ with $q$ odd and $2 \neq \ell \mid q-1$.  By Lemma~\ref{lem:omnibusl=2}(a)(i) the Sylow $\ell$-subgroups of $G$ are cyclic so all blocks have cyclic defect groups.  Trivial source modules in blocks of defect zero are just PIMs of $kG$, and the trivial source modules  lying in blocks with a non-trivial cyclic defect group are easily dealt with via Remark~\ref{rem:tsmodulecyclicdef} and  \cite[Theorem~5.3]{HL20}.  

\begin{nota}\label{rem:lmidq-1}
	In order to describe $\Triv_{\ell}(G)$ according to Convention~\ref{conv:tsctbl} we adopt the following notation. We fix $Q_{n+1}:=S_{\ell}\cong C_{\ell^{n}}$ and for each $1\leq i\leq n$ we let $Q_{i}$ denote the unique cyclic subgroup of $Q_{n+1}$ of order $\ell^{i-1}$. The chain of subgroups
	 \[\{1\} = Q_1 \leq \dots  \leq Q_{n+1} \in \Syl_{\ell}(G)\]
is then our fixed set of representatives for the conjugacy classes of $\ell$-subgroups of $G$.  We fix the following set of representatives for  the $\ell'$-conjugacy classes of $G$:
$$[G]_{\ell'}:=\{\pm I_{2}\}\cup \{\mathbf{d}(a)\mid a\in\Gamma_{\ell'}\}\cup  \{\mathbf{d}'(\xi)\mid \xi\in\Gamma'_{\ell'}\}\cup\{\varepsilon u_{\tau}\mid \varepsilon,\tau\in\{\pm1\}\}\,.$$
For any $2\leq v\leq n+1$, $1\leq i\leq n+1$, the columns of $T_{i,v}$ are labelled by a set of representatives for the $\ell'$-conjugacy classes of $\overline N_{v} = N_G(Q_{v})/Q_{v}=N/Q_{v}$ as  $N_G(Q_v) = N_G(T) = N$ for each $2\leq v \leq n+1$ by Lemma~\ref{lem:omnibus}(a)(i).  However, since $Q_{v}$ is an $\ell$-group we will simply label the columns of $T_{i,v}$ by the following fixed set of representatives for  the $\ell'$-conjugacy classes of $N$:
$$[N]_{\ell'}:=\{\pm I_{2}\}\cup  \{\mathbf{d}(a)\mid a\in\Gamma_{\ell'}\}\cup \{\sigma_{\tau}\mid\tau\in\{\pm1\}\}\,.$$	
Moreover, in order to describe the exceptional characters occurring  as constituents of the trivial source characters, for each $0\leq  i \leq n$ we fix
 \[
\pi_{q,i} :=\frac{(q-1)_\ell\cdot \ell^{-i} - 1}{2}\,,
 \]
we let $\pi_q := \pi_{q,0}$, and note that 
$\pi_{q,n}=0$. These numbers naturally come from the classification of the trivial source modules in cyclic blocks in \cite{HL20}. 
\end{nota}

\vspace{2mm}
\subsection{The $\ell$-blocks and trivial source characters of $G$}

%%We define
%%\[ \Xi(i)   := \sum\limits_{\substack{j=1 \\ \eta_j \in [S_{\ell}^\wedge / \equiv] \setminus \{1\}}}^{\pi_q(i)} R(\eta_j), 
%%\qquad \qquad 
%%\Xi_{\alpha_0}(i)   := \sum\limits_{\substack{j=1 \\ \eta_j \in [S_{\ell}^\wedge / \equiv] \setminus \{1\}}}^{\pi_q(i)} R(\alpha_0\eta_j),  
%%\]
%%and for any $\alpha \in \Irr(T_{\ell'})$ such that $\alpha^2 \neq 1$, we let 
%%\[\Xi_{\alpha}(i)   := \sum\limits_{\substack{j=1 \\ \eta_j \in S_{\ell}^\wedge \setminus \{1\}}}^{2\pi_q(i)} R(\alpha\eta_j). \]
%%{\color{blue}This notation makes sense as the characters within each sum all take the same values on $\ell'$-elements, so for the trivial source character table it is only necessary to know the number of characters which appear in the sum.
%%}
%% When $i = 0$ we drop the reference to $i$, so $\pi_q := \pi_q(0)$, $\Xi := \Xi(0)$, $\Xi_{\alpha_0} :=\Xi_{\alpha_0}(i)$ and $\Xi_{\alpha}:=\Xi_{\alpha}(i)$.  Note that as $i$ increases $\pi_q(i)$ decreases and when $i = n$ we have $\pi_q(n) = 0$ and the sums are empty. 

\begin{lem} {\label{lem:blockslmidq-1}}
	When $2 \neq \ell \mid q-1$ the $\ell$-blocks of $G$, their defect groups and their Brauer trees with type function are as given in Table~\ref{tab:blockslmidq-1}.
\end{lem}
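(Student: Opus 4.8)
The plan is to read off the raw $\ell$-block data of $G=\SL_{2}(q)$ from \cite{BonBook} and repackage it as Brauer trees carrying a type function, taking the combinatorial shape from character degrees and the signs from the character values in Table~\ref{tab:CTBLsl2}. First I would split the blocks by defect. Since $2\neq\ell\mid q-1$ we have $\ell\nmid q(q+1)$, so $|G|_{\ell}=\ell^{n}=|S_{\ell}|$, and the defect groups occur as Sylow $\ell$-subgroups of centralisers of semisimple $\ell'$-elements of the dual group $\PGL_{2}(q)$; as these centralisers are either the whole group or a maximal torus, every block has defect group either trivial or the full cyclic Sylow subgroup $S_{\ell}\cong C_{\ell^{n}}$. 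Reading the degrees from Table~\ref{tab:CTBLsl2}, the characters $R'(\theta)$ (with $\theta^{2}\neq1$) and $R'_{\pm}(\theta_{0})$ have degrees $q-1$ and $(q-1)/2$, each divisible by $\ell^{n}$, and so are exactly the defect-zero characters, each alone in a block with trivial defect group. The remaining characters $1_{G}$, $\St$, the $R(\alpha)$ with $\alpha^{2}\neq1$, and $R_{\pm}(\alpha_{0})$ (the constituents of the image of $R$) lie in blocks of full defect $C_{\ell^{n}}$, grouped by the $\ell'$-part of their labelling character of $T$ up to the inversion action of $N/T\cong C_{2}$.

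Next I would compute the invariants $e$ and $m$ of each full-defect block. By Lemma~\ref{lem:omnibus}(a)(i) we have $C_{G}(S_{\ell})=T$ and $N_{G}(S_{\ell})=N$, so the inertial quotient embeds into $N/T\cong C_{2}$ and $e\in\{1,2\}$. One gets $e=2$ for exactly the two blocks whose $\ell'$-label is fixed under inversion: the principal block $\bB_{0}$ (label $1$), containing $1_{G}$, $\St$ and the $\frac{\ell^{n}-1}{2}$ characters $R(\beta)$ with $\beta$ of $\ell$-power order; and the block with label $\alpha_{0}$, containing $R_{\pm}(\alpha_{0})$ and the $\frac{\ell^{n}-1}{2}$ characters $R(\alpha_{0}\beta)$. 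One gets $e=1$ for the $\frac{(q-1)_{\ell'}-2}{2}$ blocks labelled by the inversion-orbits $\{\alpha,\alpha^{-1}\}$ of non-real $\ell'$-characters, each containing the $\ell^{n}$ characters $R(\alpha\beta)$. In every case $m=(\ell^{n}-1)/e$, and since $e\leq2$ the tree is forced to be a line $\circ-\bullet-\circ$ with exceptional vertex in the middle when $e=2$, and a single edge $\circ-\bullet$ when $e=1$. The placement of the non-exceptional characters at the outer vertices is pinned down by the degree relations (e.g.\ in $\bB_{0}$ the two simple modules have dimensions $1$ and $q$, summing to the exceptional degree $q+1$), cross-checked against \cite{BonBook}.

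Finally I would fix the type function by evaluating the vertex characters at $x:=\mathbf{d}(a)$, a generator of the unique subgroup $D_{1}\cong C_{\ell}$ of $D$. Since an $\ell'$-order character of $T$ is trivial on the $\ell$-element $a$, Table~\ref{tab:CTBLsl2} gives $1_{G}(x)=\St(x)=R_{\pm}(\alpha_{0})(x)=1>0$, so these outer vertices are positive, while $R(\gamma)(x)=\gamma(a)+\gamma(a)^{-1}$ for every $\gamma\in T^{\wedge}$. Writing each exceptional character as $R(\gamma_{0}\beta)$ with $\gamma_{0}$ the $\ell'$-label (so $\gamma_{0}(a)=1$) and using the column-orthogonality identity $\sum_{\beta\in S_{\ell}^{\wedge}}\beta(a)=0$, the exceptional vertex evaluates to $-1$ when $e=2$ and to $-2$ when $e=1$, hence is negative in all cases; this yields the alternating signs recorded in the table. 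The main obstacle lies in reading off the Brauer trees correctly and, in the $e=1$ blocks, in singling out the non-exceptional character $R(\alpha)$ among the $R(\alpha\beta)$. This is bookkeeping rather than deep input, but it must be done in exact agreement with the conventions of \cite{BonBook} and \cite{HL20}; everything else reduces to the degree count and the elementary character-value evaluations above.
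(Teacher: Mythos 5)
Your proposal is correct, and its mathematical core coincides with the paper's proof: the paper likewise takes all block-theoretic data (block distribution, defect groups, shapes of the Brauer trees) from \cite[Chapters 8 and 9]{BonBook} and then determines the type function exactly as you do, by noting that $1_G$ is positive and that $\ell'$-characters of $T$ take the value $1$ on $\ell$-elements, so that $R_\pm(\alpha_0)$ and $R(\alpha)$ evaluate positively at a generator $x$ of $D_1$, forcing the signs by alternation. Where you differ is that you re-derive what the paper merely cites: you recover the dichotomy of defect groups from centralisers of semisimple $\ell'$-elements in the dual group, identify the defect-zero blocks by the divisibility $\ell^n \mid (q-1)$ of the degrees of $R'(\theta)$ and $R'_\pm(\theta_0)$, compute $e\in\{1,2\}$ and $m=(\ell^n-1)/e$ from $C_G(S_\ell)=T$ and $N_G(S_\ell)=N$, and pin the exceptional vertex to the middle by the degree relation $1+q=q+1$; you also verify the negativity of the exceptional vertex directly via $\sum_{\beta\in S_\ell^\wedge}\beta(a)=0$ rather than invoking alternation alone. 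This buys a more self-contained and internally cross-checked argument at the cost of length; the paper's citation-based proof is shorter but opaque without \cite{BonBook} in hand. One small inaccuracy to repair: the centraliser in $\PGL_2(q)$ of the isolated involution is \emph{not} a maximal torus but a disconnected group of order $2(q-1)$ (the torus extended by the Weyl element); since $\ell$ is odd its Sylow $\ell$-subgroup is still $C_{\ell^n}$, so your conclusion for $A_{\alpha_0}$ stands, but the stated classification of centralisers should be corrected accordingly.
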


\begin{longtable}{c||c|c|c}
	\caption{The $\ell$-blocks of $\SL_2(q)$ when $2 \neq \ell \mid q-1$.}	
	\label{tab:blockslmidq-1}
	\\  
	Block
	&	\begin{tabular}{c}
		Number of Blocks \\
		(Type)
	\end{tabular}
	& 	\begin{tabular}{c}
		Defect \\
		Groups
	\end{tabular}
	& 
	\begin{tabular}{c}
		Brauer Tree with Type  \\
		Function/ $\Irr(\bB)$
	\end{tabular}
	\\ \hline \hline 			
	
	$\mathbf{B}_0(G)$ 
	& 	\begin{tabular}{c}
		1 \\
		\small{(Principal)} 
	\end{tabular} 
	& 	$C_{\ell^n}$ 
	&	\begin{tabular}{c}
		$$ \xymatrix@R=0.0000pt@C=30pt{
			{_+} & {_-}& {_+}\\
			{\Circle} \ar@{-}[r] 
			& {\CIRCLE} \ar@{-}[r] 
			&{\Circle} \\
			{^{1_G}}&{^{\Xi}}&{^{\St}}
		}$$\\
	{\footnotesize $\Xi:=\sum\limits_{\eta \in [S_{\ell}^\wedge / \equiv] \setminus \{1\}}R(\eta)$}\\
	\end{tabular}
	\\ \hline 
	
	\begin{tabular}{c}
		$A_{\alpha_0}$ \\
		\footnotesize{($\alpha_0 \in T_{\ell'}^\wedge \setminus \{1\}$}, % \\
		\footnotesize{$\alpha_0^2 = 1$)} 
	\end{tabular} 
	& 	\begin{tabular}{c}
		1 \\
		\small{(Quasi-isolated)} 
	\end{tabular}
	& 	$C_{\ell^n}$ 	
	& 	\begin{tabular}{c}
		$$  \xymatrix@R=0.0000pt@C=30pt{	
			{_+} & {_-}& {_+}\\
			{\Circle}  \ar@{-}[r]
			& {\CIRCLE}  \ar@{-}[r]  
			&{\Circle} \\
			{^{R_+(\alpha_0)}}&{^{\Xi_{\alpha_0}}}&{^{R_-(\alpha_0)}}
		}$$\\
		{\footnotesize $\Xi_{\alpha_0}   := \sum\limits_{\eta \in [S_{\ell}^\wedge / \equiv] \setminus \{1\}} R(\alpha_0\eta)$ }\\
	\end{tabular}
	\\ \hline  
	
	\begin{tabular}{c}
		$A_\alpha$ \\
		%	\\
		\footnotesize{($\alpha \in [T_{\ell'}^\wedge / \equiv]$}, % \\
		\footnotesize{$\alpha^2 \neq 1$)}
	\end{tabular}
	& 	\begin{tabular}{c}
		$\frac{(q-1)_{\ell'} - 2}{2}$\\
		%	\\
		\small{(Nilpotent)} 
	\end{tabular}	
	& 	$C_{\ell^n}$ 		
	& 	\begin{tabular}{c}
		$  \xymatrix@R=0.0000pt@C=30pt{	
			{_+} & {_-}\\
			{\Circle} \ar@{-}[r] 
			& {\CIRCLE}    \\
			{^{R(\alpha)}}&{^{\Xi_\alpha}}
		}$ \\
		{\footnotesize $\Xi_{\alpha}  := \sum\limits_{\eta \in S_{\ell}^\wedge \setminus \{1\}} R(\alpha\eta)$}\\
	\end{tabular}
	\\ \hline 
	
	\begin{tabular}{c}
		$A'_{\theta_0,\pm}$ \\
		\footnotesize{($\theta_0 \in T_{\ell'}'^\wedge \setminus \{1\}$}, % \\
		\footnotesize{$\theta_0^2 = 1$)}
	\end{tabular}
	& 	\begin{tabular}{c}
		2 \\
		\small{(Defect zero)}
	\end{tabular}
	& 	$\{1\}$ 
	& 	\begin{tabular}{c}
		$\Irr(A'_{\theta_0,+}) = \{R_{+}'(\theta_0)\}$ \\
		$\Irr(A'_{\theta_0,-}) = \{R_{-}'(\theta_0)\}$
	\end{tabular}
	\\ 	\hline

	\begin{tabular}{c}
		$A'_\theta$  \\
		%	\\
		\footnotesize{($\theta \in [T_{\ell'}'^\wedge / \equiv]$}, % \\
		\footnotesize{$\theta^2 \neq 1$)} 
	\end{tabular}
	& 	\begin{tabular}{c}
		$\frac{(q+1)_{\ell'} - 2}{2}$ \\
		%	\\
		\small{(Defect zero)}
	\end{tabular} 
	& 	$\{1\}$ 
	& 	$\Irr(A'_\theta) = \{R'(\theta)\}$
	\\ \hline 
\end{longtable}
%\caption{Here $\Xi:= \sum\limits_{\eta \in [S_{\ell}^\wedge / \equiv] \setminus \{1\}} R(\eta)$, $\Xi_{\alpha_0}$ and $\Xi_{\alpha}$ denote the sum of all exceptional characters in the corresponding block.}

\begin{proof}
	Apart from the type functions of the Brauer trees, the information in Table \ref{tab:blockslmidq-1} can be found in \cite[Chapters 8 and 9]{BonBook}. The type function of $\sigma(\bB_0(G))$ is immediate as the trivial character is positive. Since the $\ell'$-characters of $T$ take the value $1$ on $\ell$-elements, it follows from the character table of $G$ (Table~\ref{tab:CTBLsl2}) that $R_+(\alpha_0)$ and $R_{\alpha}$ are positive. This determines the type functions for $\sigma(A_{\alpha_0})$ and $\sigma(A_{\alpha})$ for each $\alpha \in [T_{\ell'}^\wedge / \equiv]$, $\alpha^2 \neq 1$. 
\end{proof}

\begin{lem}
	\label{lem:tsmodslmidq-1}
	When $2 \neq \ell \mid q-1$ the ordinary characters $\chi_{\widehat{M}}$ of the trivial source $kG$-modules~$M$  are as given in Table~\ref{tab:tsmodslmidq-1}, where for each $1\leq i\leq n$, 
\[\Xi_i = \sum\limits_{j=1}^{\pi_{q,i}} R(\eta_j)\quad\text{ and }\quad  \Xi_{\alpha_0,i} = \sum\limits_{j=1}^{\pi_{q,i}} R(\alpha_0\eta_j)\]
are  sums of  $\pi_{q,i}$ pairwise distinct exceptional characters  in $\bB_{0}(G)$, resp. $A_{\alpha_{0}}$, and  for any $\alpha \in \Irr(T_{\ell'})$ with $\alpha^2 \neq 1$, 
\[\Xi_{\alpha,i} = \sum\limits_{j=1}^{2\pi_{q,i}} R(\alpha\eta_j)\]
is a sum of $2\pi_{q,i}$ pairwise distinct exceptional characters  in $A_{\alpha}$. 
\end{lem}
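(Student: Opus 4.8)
The plan is to work block by block, using the complete list of $\ell$-blocks, their cyclic defect groups, Brauer trees and type functions recorded in Lemma~\ref{lem:blockslmidq-1}, together with the classification of trivial source modules in cyclic blocks from \cite[Theorem~5.3]{HL20} as summarised in Remark~\ref{rem:tsmodulecyclicdef}. The defect-zero blocks $A'_{\theta_0,\pm}$ and $A'_\theta$ are immediate: each contains a single indecomposable module, which is simultaneously simple, projective and trivial source (with vertex $\{1\}$), so its character is the corresponding entry $R'_\pm(\theta_0)$ resp.\ $R'(\theta)$ of Table~\ref{tab:CTBLsl2}. It then remains to treat the three families with full cyclic defect group $D=S_\ell\cong C_{\ell^n}$, namely $\bB_0(G)$ and $A_{\alpha_0}$ (both with inertial index $e=2$) and the nilpotent blocks $A_\alpha$ (with $e=1$). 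For all of these I will invoke two structural facts that make \cite[Theorem~5.3]{HL20} directly applicable: $W(\bB)=k$ by Lemma~\ref{lem:B_0}, and $N_G(Q)=N$ for every non-trivial $\ell$-subgroup $Q\leq T$ by Lemma~\ref{lem:omnibus}(a)(i). The latter gives $N_G(D_1)=N_G(D_i)=N$ for all $i$, so that Lemma~\ref{lem:equalgreencorrs} applies and the Green correspondence at every intermediate vertex is controlled by the single group $N$.

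With $W(\bB)=k$ in force I read off the characters vertex by vertex as in Remark~\ref{rem:tsmodulecyclicdef}. For vertex $Q_1=\{1\}$ the trivial source modules are the PIMs, and an edge $S$ joining vertices $\chi_b,\chi_c$ of the Brauer tree contributes $\chi_{\widehat{P_S}}=\chi_b+\chi_c$; fed by the trees of Table~\ref{tab:blockslmidq-1} this yields $1_G+\Xi$ and $\Xi+\St$ for $\bB_0(G)$, the analogous sums $R_+(\alpha_0)+\Xi_{\alpha_0}$, $\Xi_{\alpha_0}+R_-(\alpha_0)$ for $A_{\alpha_0}$, and $R(\alpha)+\Xi_\alpha$ for each $A_\alpha$, where $\Xi,\Xi_{\alpha_0},\Xi_\alpha$ are the full exceptional characters of Lemma~\ref{lem:blockslmidq-1}. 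For the full vertex $Q_{n+1}=D$ the odd-$\ell$, $W(\bB)=k$ case of \cite[Theorem~5.3]{HL20} says the trivial source modules are exactly the positive hooks; using the type functions of Lemma~\ref{lem:blockslmidq-1} the positive vertices are $1_G,\St$ in $\bB_0(G)$, both $R_\pm(\alpha_0)$ in $A_{\alpha_0}$ and $R(\alpha)$ in $A_\alpha$, and these are precisely the characters $\chi_{\widehat M}$ (consistently with $\Xi_n=0$). The remaining intermediate vertices $Q_2,\dots,Q_n$ are handled by \cite[Theorem~5.3(b)(2)--(7)]{HL20}, which describes each such trivial source module as a module whose character is a non-exceptional ``end'' character plus a partial sum of exceptional characters of $\bB$.

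The technical heart, and the step I expect to be the main obstacle, is the bookkeeping of the exceptional part at the intermediate vertices: proving that the partial sums appearing are exactly $\Xi_i,\Xi_{\alpha_0,i}$ of length $\pi_{q,i}$ (for the blocks with $e=2$) and $\Xi_{\alpha,i}$ of length $2\pi_{q,i}$ (for the nilpotent blocks with $e=1$). Here I would use that a trivial source module $M$ with vertex $Q_{i+1}$ of order $\ell^{i}$ has Brauer quotient $M[Q_{i+1}]$ a projective module in a cyclic block of $\overline N_{i+1}=N/Q_{i+1}$ whose defect group $D/Q_{i+1}$ has order $\ell^{n-i}$ and inertial index $e$, hence exceptional multiplicity $\tfrac{\ell^{n-i}-1}{e}$. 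Since $|D|=(q-1)_\ell=\ell^n$, this equals $\tfrac{\ell^{n-i}-1}{2}=\pi_{q,i}$ when $e=2$ and $\ell^{n-i}-1=2\pi_{q,i}$ when $e=1$, matching the claimed lengths; the full character $\chi_{\widehat M}$ is then produced by \cite[Theorem~5.3(b)]{HL20} by unfolding the Green correspondence, i.e.\ summing the exceptional characters of $\bB$ lying above the exceptional character of that smaller block. Finally I would use the non-negativity and integrality on $\ell$-elements from Lemma~\ref{lem:tscharacters}(a) as a consistency check, and confirm that distinct trivial source modules of the same vertex are assigned distinct (though possibly Galois-conjugate) partial sums, so that the rows of Table~\ref{tab:tsmodslmidq-1} are correctly labelled.
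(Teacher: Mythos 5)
Your proposal follows essentially the same route as the paper's proof: the defect-zero blocks are immediate; the PIM characters are read off the Brauer trees of Table~\ref{tab:blockslmidq-1} via Remark~\ref{rem:tsmodulecyclicdef}(1); the full-vertex modules are the positive hooks, legitimised by $W(\bB)=k$ (Lemma~\ref{lem:B_0}) together with the type functions; and the intermediate vertices are delegated to \cite[Theorem~5.3(b)]{HL20}. The one organisational difference is that for the nilpotent blocks $A_{\alpha}$ the paper cites \cite[Theorem~7.1(a)]{KL21}, whereas you handle all three families uniformly through \cite[Theorem~5.3(b)(2)--(7)]{HL20}; that is an acceptable substitute, since the classification there covers all blocks with cyclic defect groups once $W(\bB)=k$ is known.

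The step you call the technical heart, however, is supported by an argument that is not valid as stated, even though its conclusion is correct in the case at hand. Your Brauer-quotient computation genuinely controls $\chi_{\widehat{f(M)}}$, not $\chi_{\widehat{M}}$: since $Q_{i+1}\unlhd N$ acts trivially on $M[Q_{i+1}]$, the Green correspondent $f(M)$ is inflated from $N/Q_{i+1}$, and so the number of exceptional constituents of $\chi_{\widehat{f(M)}}$ is indeed the exceptional multiplicity $(\ell^{n-i}-1)/e$ of the corresponding block of $N/Q_{i+1}$. But the transfer you propose --- that $\chi_{\widehat{M}}$ is an end character plus ``the exceptional characters of $\bB$ lying above those of the smaller block'', hence has exactly that many exceptional constituents --- is not a general principle: how many exceptional characters of $\bB$ enter $\chi_{\widehat{M}}$ depends on the position of the exceptional vertex in $\sigma(\bB)$ and is precisely what \cite[Theorem~A.1(d)]{HL20} computes. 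The companion case $2\neq\ell\mid q+1$ of this very paper is a counterexample to your mechanism: there the local data are identical ($e=2$ and defect $\ell^{n-i}$ after passing to $N'/Q_{i+1}$, hence local exceptional multiplicity $(\ell^{n-i}-1)/2$), yet Table~\ref{tab:tsmodslmidq+1} records $\pi'_{q,i}=(\ell^{n}-\ell^{n-i})/2$ exceptional constituents in $\chi_{\widehat{M}}$ --- a count forced, for instance, by Green's theorem that $[G:Q]_{\ell}$ divides the dimension of an indecomposable module with vertex $Q$ --- because the exceptional vertex of $\sigma(\bB_{0}(G))$ sits at the end of that tree rather than in the middle. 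In the present case $\ell\mid q-1$ your count agrees with the truth exactly because the exceptional vertex is the middle vertex of all three trees, but that agreement is a conclusion of \cite[Theorem~5.3(b)(2) and Theorem~A.1(d)]{HL20} (which is what the paper cites), not of your multiplicity bookkeeping. Since you do ultimately defer to \cite[Theorem~5.3(b)]{HL20} for the characters themselves, the lemma is proved; just replace the ``unfolding the Green correspondence'' gloss by an explicit invocation of \cite[Theorem~A.1(d)]{HL20}.
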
 

\begin{rem}
	For the purpose of our computations, it is not necessary to know precisely which exceptional characters occur as constituents of the trivial source characters, because they all take the same values at the $\ell'$-elements. %Note that as $i$ increases $\pi_{q,i}$ decreases and when $i = n$ we have $\pi_{q,n} = 0$ and the sums are empty.
\end{rem}

\begin{longtable}{|c|c|c|}%|c}
	\caption{Trivial source characters of $\SL_2(q)$ when $2 \neq \ell \mid q-1$.}	
	\label{tab:tsmodslmidq-1}
	\\  \hline 
	\begin{tabular}{c}
		Vertices of $M$
	\end{tabular}
	& \begin{tabular}{c}
		Character $\chi_{\widehat{M}}$
	\end{tabular}
& \begin{tabular}{c}
	Block containing $M$
\end{tabular}
	\\ \hline \hline 
	
	\multirow{5}{*}{$\{1\}$}
	&$1_G + \Xi$,  
	$\St + \Xi$
	&
	$\mathbf{B}_0(G)$
	\\
	
	&
	$R_\pm(\alpha_0) + \Xi_{\alpha_0}$
	& 
	$A_{\alpha_0}$
	\\
	
	&
	$R'_\pm(\theta_0)$
	&
	$A'_{\theta_0,\pm}$
	\\
	
	&
	$R(\alpha) + \Xi_{\alpha}$
	&
	$A_{\alpha}$, ($\alpha \in [T_{\ell'}^\wedge / \equiv]$, $\alpha^2 \neq 1$)
	\\
	
	&
	$R'(\theta)$
	&
	$A'_{\theta}$, ($\theta \in [T_{\ell'}'^\wedge / \equiv]$, $\theta^2 \neq 1$)
	\\
	\hline \hline

	\multirow{3}{*}{
	\begin{tabular}{c}
		$C_{\ell^i}$ \\
		($1 \leq i < n$)
	\end{tabular}}
	&
	$1_G + \Xi_i$, $\St + \Xi_i$
	&
	$\mathbf{B}_0(G)$
	\\

	&
	$R_\pm(\alpha_0) + \Xi_{\alpha_0, i}$
	&
	$A_{\alpha_0}$
	\\
	
	&
	$R(\alpha) + \Xi_{\alpha, i}$
	&
	$A_{\alpha}$, ($\alpha \in [T_{\ell'}^\wedge / \equiv]$, $\alpha^2 \neq 1$)
	\\
	\hline \hline
	
\multirow{3}{*}{
		$C_{\ell^n}$}
&
$1_G$, $\St$
&
$\mathbf{B}_0(G)$
\\

&
$R_\pm(\alpha_0)$
&
$A_{\alpha_0}$
\\

&
$R(\alpha)$
&
$A_{\alpha}$, ($\alpha \in [T_{\ell'}^\wedge / \equiv]$, $\alpha^2 \neq 1$)
\\
\hline 	
\end{longtable}

%
%
%\begin{longtable}{|c|l|}%|c}
%	\caption{Trivial source characters of $\SL_2(q)$ when $2 \neq \ell \mid q-1$.}	
%	\label{tab:tsmodslmidq-1}
%	\\  \hline 
%	\begin{tabular}{c}
%		Vertices of $M$
%	\end{tabular}
%	& \begin{tabular}{c}
%		Character $\chi_{\widehat{M}}$
%	\end{tabular}
%	\\ \hline \hline 
%	
%	$\{1\}$
%	
%	& 
%	\begin{tabular}{l}
%		$\left.\begin{array}{l}
%			1_G + \Xi \\
%			\St + \Xi
%			\end{array}  \right\} \mbox{in } \mathbf{B}_0(G)$ \\
%%		$1_G + \Xi$\\ 
%%		$\St + \Xi$\\ 
%		$R_\pm(\alpha_0) + \Xi_{\alpha_0}$\\ 
%		$R'_\pm(\theta_0)$  \\
%		$R(\alpha) + \Xi_{\alpha}$ ($\alpha \in [T_{\ell'}^\wedge / \equiv]$, $\alpha^2 \neq 1$)\\
%		$R'(\theta)$ ($\theta \in [T_{\ell'}'^\wedge / \equiv]$, $\theta^2 \neq 1$)\\
%	\end{tabular}
%	\\ \hline \hline 
%	
%	
%	%%%%%%%%%%%%%%%%%%%%%%%%%%%%%%%
%	
%	\begin{tabular}{c}
%		$C_{\ell^i}$ \\
%		($1 \leq i < n$)
%	\end{tabular} 
%	& 	\begin{tabular}{l}
%		$1_G + \Xi(i)$\\ %[HL Th 5.3 (b)(2)(i)]
%		$\St + \Xi(i)$\\ %[HL Th 5.3 (b)(2)(i)]
%		$R_\pm(\alpha_0) + \Xi_{\alpha_0}(i)$ \\ %[HL Th 5.3 (b)(2)(i)]
%		$R(\alpha) + \Xi_{\alpha}(i)$ ($\alpha \in [T_{\ell'}^\wedge / \equiv]$, $\alpha^2 \neq 1$) %[HL Th 5.3 (a)(i)]
%	\end{tabular}
%	\\ 	\hline \hline 
%	%%%%%%%%%%%%%%%%%%%%%%
%	
%	$C_{\ell^n}$
%	& 
%	\begin{tabular}{l}
%		$1_G$\\ $\St$\\ $R_\pm(\alpha_0)$\\
%		$R(\alpha)$ ($\alpha \in [T_{\ell'}^\wedge / \equiv]$, $\alpha^2 \neq 1$)
%	\end{tabular}
%	\\  \hline 	
%	
%\end{longtable}

\begin{proof}
The ordinary characters of the PIMs lying in blocks of defect zero are immediate from Table~\ref{tab:blockslmidq-1}, and the characters of the PIMs lying in blocks with a non-trivial cyclic defect group can also be read off from Table~\ref{tab:blockslmidq-1} using Remark~\ref{rem:tsmodulecyclicdef}(a).
\par
The trivial source $kG$-modules with a non-trivial vertex all belong to $\ell$-blocks with a non-trivial cyclic defect group. By Lemma~\ref{NG(D_1)} the module $W(\textbf{B})$ is trivial for all $\ell$-blocks $\mathbf{B}$ of~$G$, so we obtain the trivial source characters as follows.  The characters $\chi_{\widehat{M}}$ of trivial source $kG$-modules $M$ with full vertex $C_{\ell^{n}}$ are therefore directly identified using the Brauer trees from Table~\ref{tab:blockslmidq-1} and Remark~\ref{rem:tsmodulecyclicdef}(b). %They are
%\[
%\begin{cases}
%   1_{G}\text{ and }\St   & \text{in  }\bB_{0}(G), \\
%   R_+(\alpha_0) \text{ and }R_-(\alpha_0)  & \text{in  }A_{\alpha_0},\\
%   R(\alpha)  &    \text{in  }A_{\alpha}\,\, (\alpha \in [T_{\ell'}^\wedge / \equiv], \alpha^2 \neq 1). 
%\end{cases}
%\]
The characters $\chi_{\widehat{M}}$ of trivial source modules $M\in\TS(G;C_{\ell^{i}})$ with $0< i<n$ are obtained from the classification of the trivial source modules in blocks with cyclic defect groups in \cite{HL20}. More precisely, if $M$ belongs to $\bB_{0}(G)$ or $A_{\alpha_{0}}$ then  \cite[Theorem~5.3(b)(2) and Theorem~A.1(d)]{HL20} yield the following characters $\chi_{\widehat{M}}$, 
\[
\begin{cases}
   1_{G}+\Xi_i\text{ and }\St +~\Xi_i   & \text{in  }\bB_{0}(G), \\
   R_+(\alpha_0) \text{ and }R_-(\alpha_0)  & \text{in  }A_{\alpha_0},\\
\end{cases}
\]
and if $M$ belongs to a nilpotent block $A_{\alpha}$  $(\alpha \in [T_{\ell'}^\wedge / \equiv], \alpha^2 \neq 1)$, then $\chi_{\widehat{M}}=R(\alpha) + \Xi_{\alpha, i}$ by \cite[Theorem~7.1(a)]{KL21}. 
\end{proof}

\vspace{2mm}
\subsection{The $\ell$-blocks and trivial source characters of $N$}
\label{subsec:N}
%%With the notation for the ordinary characters of $N$ as given in Section \ref{subsec:NandN'}, we let
%%%Recall the notation for the ordinary characters of $N$ given in Section \ref{subsec:NandN'}. Further, let
%%\[ \Xi_N(i)   := \sum\limits_{\substack{j=1 \\ \eta_j \in [S_{\ell}^\wedge / \equiv] \setminus \{1\}}}^{\pi_q(i)} \chi_{\eta_j}, 
%%\qquad \qquad 
%%\Xi_{N, \alpha_0}(i)   := \sum\limits_{\substack{j=1 \\ \eta_j \in [S_{\ell}^\wedge / \equiv] \setminus \{1\}}}^{\pi_q(i)} \chi_{\alpha_0\eta_j},  
%%\]
%%and for any $\alpha \in \Irr(T_{\ell'})$ such that $\alpha^2 \neq 1$, let 
%%\[\Xi_{N,\alpha}(i)   := \sum\limits_{\substack{j=1 \\ \eta_j \in S_{\ell}^\wedge \setminus \{1\}}}^{2\pi_q(i)} \chi_{\alpha\eta_j}. \]

\begin{lem}
	\label{lem:tsmodsN}
	When $2 \neq \ell \mid q-1$ the Brauer correspondents in $N$ of the $\ell$-blocks of $G$ with non-trivial defect groups are as given in Table~\ref{tab:blocksN}, where the labelling of the non-principal blocks of $N$ comes from \cite[Section 7.1.2]{BonBook}. 
	% with their defect groups and their Brauer trees with type function. 
	The ordinary characters $\chi_{\widehat{f(M)}}$ of the $kN$-Green correspondents $f(M)$ of  the trivial source $kG$-modules $M$ with a non-trivial vertex  are as given  in Table~\ref{tab:tsmodsN}, 
	where for each $1\leq i\leq n$, 
	\[\Xi^{N}_i = \sum\limits_{j=1}^{\pi_{q,i}} \chi_{\eta_j},\quad\text{ and }\quad  \Xi^{N}_{\alpha_0, i} = \sum\limits_{j=1}^{\pi_{q,i}} \chi_{\alpha_0\eta_j}\]
are  sums of  $\pi_{q,i}$ pairwise distinct exceptional characters  in $\Irr(\bB_{0}(N))$, resp. $\Irr(\mathcal{O}Nb_{\alpha_0})$, and  for any $\alpha \in \Irr(T_{\ell'})$ with $\alpha^2 \neq 1$, 
\[\Xi^{N}_{\alpha, i} = \sum\limits_{j=1}^{2\pi_{q,i}} \chi_{\alpha\eta_j}\]
is a sum of $2\pi_{q,i}$ pairwise distinct exceptional characters  in $\Irr(\mathcal{O}N( b_{\alpha} + b_{\alpha^{-1}}))$. 	
\end{lem}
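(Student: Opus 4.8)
The plan is to establish the two assertions in turn, mirroring the proof of Lemma~\ref{lem:tsmodslmidq-1} but carried out inside $N$. The conceptual point is that, by Lemma~\ref{lem:omnibus}(a)(i), $N = N_G(Q)$ for every non-trivial $\ell$-subgroup $Q \leq T$, so the Green correspondence relative to $N$ governs all trivial source $kG$-modules with non-trivial vertex; by Proposition~\ref{prop:omnts}(d) the Green correspondent $f(M) = M[Q]$ is again a trivial source module, it lies in the Brauer correspondent block $\bb := \bB^N$ of the block $\bB$ containing $M$, and $\bb$ has the same cyclic defect group $D = S_\ell$. Hence once the blocks $\bb$ and their Brauer trees are determined, the characters $\chi_{\widehat{f(M)}}$ can be read off exactly as on the $G$-side, and the Green correspondence bijection of Proposition~\ref{prop:omnts}(d) guarantees that the trivial source $kN$-modules so classified are precisely the $f(M)$.

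First I would determine Table~\ref{tab:blocksN}. Since $T \unlhd N$ with $[N:T] = 2$ and $\ell$ is odd, the blocks of $kN$ with non-trivial defect are governed by Clifford theory relative to $T$: the blocks of $kT$ are labelled by the $\ell'$-parts of the linear characters $\alpha \in T^\wedge$ (via $T = S_\ell \times T_{\ell'}$), and $\sigma$ acts on these labels by inversion $\alpha \mapsto \alpha^{-1}$. The $\sigma$-fixed labels $1$ and $\alpha_0$ (with $\alpha_0^2 = 1$) therefore yield blocks of inertial index $e = 2$, namely $\bB_0(N)$ and $\mathcal{O}Nb_{\alpha_0}$, while each free orbit $\{\alpha, \alpha^{-1}\}$ with $\alpha^2 \neq 1$ fuses into a single nilpotent block $\mathcal{O}N(b_\alpha + b_{\alpha^{-1}})$ with $e = 1$. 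Using the character table of $N$ (\cite[Tables~6.2 and 6.3]{BonBook}) I would sort $\Irr(N)$ into these blocks according to the restriction to $S_\ell$, placing $1_N$, $\varepsilon$ and the $\chi_\eta$ ($\eta \in [S_\ell^\wedge/\equiv]\setminus\{1\}$) in $\bB_0(N)$, the $\chi^\pm_{\alpha_0}$ and the $\chi_{\alpha_0\eta}$ in $\mathcal{O}Nb_{\alpha_0}$, and $\chi_\alpha$ with the $\chi_{\alpha\eta}$ in $\mathcal{O}N(b_\alpha + b_{\alpha^{-1}})$; the exceptional multiplicities $m = (|D|-1)/e$ then match those on the $G$-side. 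That $\bb = \bB^N$ is indeed the Brauer correspondent of the corresponding $G$-block follows from Brauer's first main theorem together with the coincidence of defect groups and inertial indices. Finally, the Brauer correspondent of a block with cyclic defect group has a star-shaped Brauer tree with the exceptional vertex at its centre, which gives the claimed trees, and the type function is forced by alternation once one checks that each non-exceptional character $1_N, \varepsilon, \chi^\pm_{\alpha_0}, \chi_\alpha$ takes a strictly positive value on a generator $x$ of $D_1$ (the underlying $T$-characters being trivial on $S_\ell$); thus the outer vertices are positive and the central exceptional vertex is negative.

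For the second assertion I would first record that $W(\bb) = k$ for each such block of $N$: the argument of Lemma~\ref{NG(D_1)} applies verbatim with $G$ replaced by $N$, since $C_N(D_1) = T$, the defect group $D$ is normal in $T$, and the simple $kT$-modules are one-dimensional and hence trivial source. With the Brauer trees and the condition $W(\bb) = k$ in hand, I would then invoke the classification of trivial source modules in cyclic blocks exactly as in Lemma~\ref{lem:tsmodslmidq-1}: for the full vertex $D = C_{\ell^n}$ the trivial source $kN$-modules are the positive hooks (Remark~\ref{rem:tsmodulecyclicdef}(b)); for the intermediate vertices $C_{\ell^i}$ ($1 \leq i < n$) they are given by \cite[Theorem~5.3(b)(2)]{HL20} in $\bB_0(N)$ and $\mathcal{O}Nb_{\alpha_0}$ and by \cite[Theorem~7.1(a)]{KL21} in the nilpotent blocks $\mathcal{O}N(b_\alpha + b_{\alpha^{-1}})$. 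Reading off the characters produces the entries of Table~\ref{tab:tsmodsN}, with the exceptional sums $\Xi^N_i$, $\Xi^N_{\alpha_0,i}$, $\Xi^N_{\alpha,i}$ of the stated lengths $\pi_{q,i}$, $\pi_{q,i}$ and $2\pi_{q,i}$; the rows correspond to those of Table~\ref{tab:tsmodslmidq-1} because Green correspondence preserves both the vertex and the Brauer correspondent block.

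I expect the main obstacle to be the first assertion, specifically the correct block distribution of $\Irr(N)$ and the accompanying type functions. The delicate bookkeeping is that the $\sigma$-fixed character $\alpha_0$ splits off two linear constituents $\chi^\pm_{\alpha_0}$, so that $\mathcal{O}Nb_{\alpha_0}$ genuinely carries inertial index $e = 2$, whereas an orbit $\{\alpha, \alpha^{-1}\}$ with $\alpha^2 \neq 1$ collapses into a single nilpotent block with $e = 1$; keeping this consistent with the character table of $N$ and with the exceptional multiplicities is where the care lies. Once the Brauer trees of Table~\ref{tab:blocksN} are secured, the second assertion is a routine transcription of the cyclic-block classification already used for $G$.
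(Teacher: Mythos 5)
Your overall architecture matches the paper's proof: block distribution of $\Irr(N)$ read from the character table of $N$ (the paper uses central characters modulo $\ell$ where you use Clifford theory relative to $T\unlhd N$ -- both fine), star-shaped Brauer trees because the defect groups are normal in $N$, the type function fixed by positivity of the $\ell$-rational characters on $\ell$-elements, $W(\bb)=k$, and then the classification of \cite[Theorem~5.3]{HL20} and \cite[Theorem~7.1(a)]{KL21} to list the trivial source $kN$-modules. Up to that point your argument is sound.

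There is, however, a genuine gap in your final step. You justify the row-by-row correspondence of Table~\ref{tab:tsmodsN} only by ``Green correspondence preserves both the vertex and the Brauer correspondent block.'' This is insufficient for the blocks of inertial index $e=2$: in $\bB_0(N)$ and in $\mathcal{O}Nb_{\alpha_0}$ there are \emph{two} trivial source modules with each given vertex $C_{\ell^i}$, and knowing that $f(M)$ has the same vertex and lies in the correspondent block does not say which of the two it is. For the principal block one can still repair this cheaply (the module whose character contains $1_G$ must have $1_N$ as a constituent of its correspondent, as the paper does for $N'$ in Lemma~\ref{lem:tsmodsN'}), but for $A_{\alpha_0}$ the two candidate characters $\chi^{+}_{\alpha_0}+\Xi^{N}_{\alpha_0,i}$ and $\chi^{-}_{\alpha_0}+\Xi^{N}_{\alpha_0,i}$ are distinct, and the asserted sign-matching $R_{\pm}(\alpha_0)\mapsto\chi^{\pm}_{\alpha_0}$ is exactly what feeds the entries $\pm\tau\sqrt{-1^{\kappa}}$ in Table~\ref{tab:lmidq-1T_ii}; nothing in your proposal pins it down. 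The paper closes this by observing that the Morita equivalences between $\bB$ and $\bb$ of \cite[Section~9.3]{BonBook} respect the labelling of characters, and are in fact source-algebra equivalences (since $W(\bB)=k$ and the type functions agree), hence preserve trivial source modules and force the labelled correspondence. A related, smaller defect: your claim that $\bb=\bB^{N}$ ``follows from Brauer's first main theorem together with the coincidence of defect groups and inertial indices'' cannot identify the correspondence within the family of nilpotent blocks $A_{\alpha}$, since all of them share the same defect group and inertial index; one must track the labels $\alpha$, e.g.\ via central characters or the blocks of $T=C_G(D_1)$ covered, as is done in \cite[Section~7.1.2]{BonBook}.
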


\begin{longtable}{c||c|c|c}%|c}%|c}%|c}
	\caption{Brauer correspondents in $N$ of the $\ell$-blocks of $G$, when $2 \neq \ell \mid q-1$.}	
	\label{tab:blocksN}
	\\  
	\begin{tabular}{c} 
		Block of $G$ \\
		$\textbf{B}$
	\end{tabular}
	&	\begin{tabular}{c}
		Brauer correspondent \\
		$\textbf{b}$
	\end{tabular}
	& 	\begin{tabular}{c}
		Defect \\
		Groups
	\end{tabular}
	& 
	\begin{tabular}{c}
		Brauer Tree $\sigma(\textbf{b})$ \\
		with Type Function
	\end{tabular}
	\\ \hline \hline 			
	
	$\mathbf{B}_0(G)$ 
	& 	$\mathbf{B}_0(N)$  
	& 	$C_{\ell^n}$ 
	&	\begin{tabular}{c}
		$$ \xymatrix@R=0.0000pt@C=30pt{
			{_+} & {_-}& {_+}\\
			{\Circle} \ar@{-}[r]
			& {\CIRCLE} \ar@{-}[r] 
			&{\Circle} \\
			{^{1_N}}&{^{\Xi^N}}&{^{\varepsilon}}
		}$$\\
		{\footnotesize $\Xi^{N} := \sum\limits_{\eta \in [S_{\ell}^\wedge / \equiv] \setminus \{1\}} \chi_{\eta}$}\\
	\end{tabular}
	\\ \hline

	\begin{tabular}{c}
		$A_{\alpha_0}$\\
		\footnotesize{($\alpha_0 \in T_{\ell'}^\wedge \setminus \{1\}$}, % \\
		\footnotesize{$\alpha_0^2 = 1$)} 
	\end{tabular} 
	&
	\begin{tabular}{c}
		$k\otimes_{\cO}\mathcal{O}Nb_{\alpha_0}$
	\end{tabular} 
	& 	$C_{\ell^n}$ 	
	& 	\begin{tabular}{c}
		$$  \xymatrix@R=0.0000pt@C=30pt{	
			{_+} & {_-}& {_+}\\
			{\Circle}  \ar@{-}[r]
			& {\CIRCLE}  \ar@{-}[r] 
			&{\Circle} \\
			{^{\chi^+_{\alpha_0}}}&{^{\Xi^{N}_{\alpha_0}}}&{^{\chi^-_{\alpha_0}}}
		}$$\\
	{\footnotesize	$\Xi^{N}_{\alpha_0}  := \sum\limits_{\eta \in [S_{\ell}^\wedge / \equiv] \setminus \{1\}} \chi_{\alpha_0\eta}$  }\\
	\end{tabular}
	\\ \hline  
	
	\begin{tabular}{c}
		$A_{\alpha}$ \\
		\footnotesize{($\alpha \in [T_{\ell'}^\wedge / \equiv]$}, %\\
		\footnotesize{$\alpha^2 \neq 1$)}
	\end{tabular}
	&
	\begin{tabular}{c}
		$k\otimes_{\cO}\mathcal{O}N( b_{\alpha} + b_{\alpha^{-1}})$
	\end{tabular}
	& 	$C_{\ell^n}$ 		
	& 	\begin{tabular}{c}
		$  \xymatrix@R=0.0000pt@C=30pt{	
			{_+} & {_-}\\
			{\Circle} \ar@{-}[r] 
			& {\CIRCLE}    \\
				{^{\chi_\alpha}}&{^{\Xi^{N}_{\alpha}}}
		}$\\
	{\footnotesize	$\Xi^{N}_{\alpha}  := \sum\limits_{ \eta \in S_{\ell}^\wedge \setminus \{1\}} \chi_{\alpha\eta}$ }\\  
	\end{tabular}
	\\ \hline 
	
\end{longtable}

\begin{longtable}{|c|l|l|}
	\caption{The trivial source characters of the $kN$-Green correspondents when $2 \neq \ell \mid q-1$.}	
	\label{tab:tsmodsN}
	\\  \hline 
	\begin{tabular}{c}
		Vertices of $M$
	\end{tabular}
	& \begin{tabular}{c}
		Character $\chi_{\widehat{M}}$
	\end{tabular}
	& \begin{tabular}{c}
		Character $\chi_{\widehat{f(M)}}$ of the  \\Green Correspondent 
	\end{tabular}
	\\ \hline \hline

	%%%%%%%%%%%%%%%%%%%%%%%%%%%%%%%
	
	\begin{tabular}{c}
		$C_{\ell^i}$ \\
		($1 \leq i < n$)
	\end{tabular} 
	& 	\begin{tabular}{l}
		$1_G + \Xi_i$\\ %[HL Th 5.3 (b)(2)(i)]
		$\St + \Xi_i$\\ %[HL Th 5.3 (b)(2)(i)]
		$R_\pm(\alpha_0) + \Xi_{\alpha_0, i}$ \\ %[HL Th 5.3 (b)(2)(i)]
		$R(\alpha) + \Xi_{\alpha, i}$ ($\alpha \in [T_{\ell'}^\wedge / \equiv]$, $\alpha^2 \neq 1$) %[HL Th 5.3 (a)(i)]
	\end{tabular}
	& \begin{tabular}{l}
		$1_N + \Xi^N_i$\\ %[HL Th 5.3 (b)(2)(i)]
		$\varepsilon + \Xi^N_i$\\ %[HL Th 5.3 (b)(2)(i)]
		$\chi^\pm_{\alpha_0} + \Xi^{N}_{\alpha_0, i}$ \\ %[HL Th 5.3 (b)(2)(i)]
		$\chi_\alpha + \Xi^{N}_{\alpha, i}$%[HL Th 5.3 (a)(i)]
	\end{tabular}
	
	\\ 	\hline 	 \hline 
	%%%%%%%%%%%%%%%%%%%%%%
	
	$C_{\ell^n}$
	& 
	\begin{tabular}{l}
		$1_G$\\
		$\St$\\
		$R_\pm(\alpha_0)$ \\
		$R(\alpha)$ ($\alpha \in [T_{\ell'}^\wedge / \equiv]$, $\alpha^2 \neq 1$)
	\end{tabular}
	&
	\begin{tabular}{l}
		$1_N$ \\
		$\varepsilon$ \\
		$\chi^\pm_{\alpha_0}$ \\
		$\chi_{\alpha}$
	\end{tabular}
	\\  \hline 
	
\end{longtable}

\begin{proof}	
	
	The partitioning of the ordinary characters of $N$ into $\ell$-blocks can be determined by examining the values of the central characters of $N$ modulo $\ell$ using the character table \cite[Table 6.2]{BonBook}: 
	\begin{itemize}
		\item $\Irr(\mathbf{B}_0(N))$ contains $1_N$, $\varepsilon$ and $\chi_{\eta}$ for all $\eta \in \Irr(S_{\ell}) \setminus \{1\}$, 
		\item $\Irr(\mathcal{O}Nb_{\alpha_0})$ contains $\chi^\pm_{\alpha_0}$ and $\chi_{\alpha_0 \eta}$ for all $\eta \in \Irr(S_{\ell}) \setminus \{1\}$, and
		\item for each $\alpha \in \Irr(T_{\ell'})$ with $\alpha^2 \neq 1$, there exists a block containing $\chi_{\alpha \eta}$ for all $\eta \in \Irr(S_{\ell})$.
	\end{itemize}
	%The labelling of the non-principal blocks of $N$ comes from \cite[Section 7.1.2]{BonBook}. 
	
	All blocks of $N$ have maximal normal defect groups so the Brauer trees are star-shaped with the exceptional vertex in the middle (see e.g. \cite[Proposition 6.5.4]{BensonBookI}). Since $1_{N}$, $\varepsilon$, $\chi_{\alpha_0}$ and $\chi_{\alpha}$ for $\alpha \in \Irr(T_{\ell'})$ are all $\ell$-rational, these characters are non-exceptional. This fully determines the shape of the Brauer trees given in  Table~\ref{tab:blocksN}. 
	The type function of $\bB_{0}(N)$ is immediate as the trivial character is positive. It is also easy to see from the character table of $N$ that $\chi^\pm_{\alpha_0}$ and $\chi_{\alpha}$ (for each $\alpha \in \Irr(T_{\ell'})$ with $\alpha^2 \neq 1$) are positive because $\alpha_0$ and $\alpha$ are $\ell'$-characters so they take the value $1$ on all $\ell$-elements. This determines the type function for all remaining blocks. 
	\par
	The characters of the trivial source $kN$-modules in  the third column of Table~\ref{tab:tsmodsN}  are obtained via Remark~\ref{rem:tsmodulecyclicdef},  \cite[Theorem~5.3(b)(2) and Theorem~A.1(d)]{HL20}  and \cite[Theorem~7.1(a)]{KL21} exactly as in the proof of Lemma~\ref{lem:tsmodslmidq-1}. 
	\par
	The correspondence between the characters  $\chi_{\widehat{M}}$ of the trivial source $kG$-modules with a non-trivial vertex and the characters $\chi_{\widehat{f(M)}}$ of their $kN$-Green correspondents is clear. Indeed, firstly the Green correspondence and Brauer correspondence commute. Secondly,  in all cases we have a Morita equivalence between $\bB$ and $\bb$ that  respects the labelling of characters by the discussion in \cite[Section 9.3]{BonBook}, and  thirdly the Morita equivalence is a source-algebra equivalence as $W(\bB)=k$ and the type function also respects the labelling, so trivial source modules are preserved. 
%%	
%%	
%%	 $kN$-Green correspondents of the trivial source $kG$-modules with a non-trivial vertex
%%	The $kN$-Green correspondents of the trivial source $kG$-modules with a non-trivial vertex lying  in the nilpotent blocks of $G$ are immediate as the Green correspondence and Brauer correspondence commute, and such blocks have a unique trivial source module of each possible vertex. Since  the Green correspondent of the trivial module is the trivial module, the Green correspondents of the trivial source  $\bB_{0}(G)$-modules with a non-trivial vertex and $N$ are also clear. Finally, it follows from the discussion in \cite[Section 9.3]{BonBook} that the Morita equivalence between $A_{\alpha_0}$ and $\mathcal{O}Nb_{\alpha_0}$ respects the labelling of characters, and so the Green correspondents of the trivial source $kG$-modules  in $A_{\alpha_0}$ are also clear.
\end{proof}

\vspace{2mm}

\subsection{The trivial source character table of $G$}
\label{sec:tschartablelmidq-1}

We fix one last piece of notation for the tables below and set  
\[\kappa := \left\{ \begin{array}{ll} 0 & \mbox{ if } q \equiv 1 \pmod{4} \\ 1 & \mbox{ if } q \equiv 3 \pmod{4} .\end{array} \right. \]

\vspace{2mm}

\begin{thm}\label{thm:l|q-1}
	Suppose that $2 \neq \ell \mid q-1$. With notation as in Notation \ref{rem:lmidq-1}, %Let $\{1\} = Q_0\leq \dots  \leq Q_n \in \Syl_{\ell}(G)$ be the chain of representatives for the conjugacy classes of $\ell$-subgroups of $G$ fixed in Remark \ref{rem:lmidq-1}, with $ N_G(Q_i)=N$ for each $1 \leq i \leq n$. 
	the trivial source character table $\Triv_{\ell}(G)= [T_{i,v}]_{1\leq i,v\leq n+1}$ is given as follows:
	\begin{enumerate}[{\,\,\rm(a)}] \setlength{\itemsep}{2pt}
		\item $T_{i,v} = \mathbf{0}$ if $v > i$;
		\item the matrices $T_{i,1}$ are as given in Table \ref{tab:lmidq-1T_i1} for each $1 \leq i \leq n+1$;
	        \item the matrices $T_{i,i}$ are as given in Table \ref{tab:lmidq-1T_ii} for each $2 \leq i \leq n+1$; and 
		\item $T_{i,v} = T_{i,i}$ for all $2 \leq v < i \leq n+1$.
	\end{enumerate} 
%   In Table~\ref{tab:lmidq-1T_i1} and Table~\ref{tab:lmidq-1T_ii}  we %write   $T_{\ell'} = \langle a \rangle$, $T'_{\ell'} = \langle b \rangle$,  $\Irr(T_{\ell'}) = \langle \alpha \rangle$ and $\Irr(T'_{\ell'}) = \langle \theta \rangle$. We fix  a primitive $(q-1)_{\ell'}$-root of unity $\zeta$ such that $\alpha(a) = \zeta$, and a primitive $(q+1)_{\ell'}$-root of unity $\psi$ such that $\theta(b) = \psi$ and we 
%   set  \[\kappa := \left\{ \begin{array}{ll} 0 & \mbox{ if } q \equiv 1 \pmod{4} \\ 1 & \mbox{ if } q \equiv 3 \pmod{4} .\end{array} \right. \]
\end{thm}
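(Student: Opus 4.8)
The plan is to treat the four assertions separately, recognising each table entry as a species value $\tau^{G}_{Q_v,s}([M])$ and reading it off from the character data already assembled in Lemmas~\ref{lem:tsmodslmidq-1} and~\ref{lem:tsmodsN}. Part~(a) is immediate from Remark~\ref{rem:tsctbl}(c): one has $\tau^{G}_{Q_v,s}([M_{(Q_i,E)}])=0$ unless $Q_v\leq_G Q_i$. Since $Q_1\leq\cdots\leq Q_{n+1}$ is a chain of cyclic $\ell$-groups with $|Q_j|=\ell^{j-1}$, for $v>i$ we have $|Q_v|>|Q_i|$, so $Q_v$ cannot be conjugate into $Q_i$, whence $T_{i,v}=\mathbf{0}$.

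For part~(b) I would use Remark~\ref{rem:tsctbl}(d): when $v=1$ the Brauer quotient is $M[\{1\}]=M$, so the $(M,s)$-entry of $T_{i,1}$ equals $\tau^{G}_{\{1\},s}([M])=\chi_{\widehat{M}}(s)$ for $s\in[G]_{\ell'}$. All the characters $\chi_{\widehat{M}}$ are listed in Lemma~\ref{lem:tsmodslmidq-1}, so the matrices $T_{i,1}$ are obtained by evaluating those characters at the representatives of $[G]_{\ell'}$ using the character table of $G$ (Table~\ref{tab:CTBLsl2}). The only point needing care is the exceptional sums $\Xi_i$, $\Xi_{\alpha_0,i}$, $\Xi_{\alpha,i}$; since every $\eta\in S_\ell^\wedge$ is trivial on $\ell'$-elements, all exceptional characters of a given cyclic block take the same value on each $\ell'$-class, so each such sum contributes its number of summands times this common value, read off from Table~\ref{tab:CTBLsl2}.

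For part~(c) I would combine Remark~\ref{rem:tsctbl}(a) with Proposition~\ref{prop:omnts}(d). A trivial source module $M$ with vertex $Q_i$ satisfies $\tau^{G}_{Q_i,s}([M])=\varphi_{M[Q_i]}(s)$, and $M[Q_i]=f(M)$ is a projective $k\overline{N}_i$-module, whose Brauer character at an $\ell$-regular element agrees with the ordinary character of its trivial source lift. As $Q_i\unlhd N$ is a normal $\ell$-subgroup, the $\ell'$-classes of $\overline{N}_i=N/Q_i$ correspond bijectively to those of $N$ (and are labelled by $[N]_{\ell'}$ as fixed in Notation~\ref{rem:lmidq-1}), and $\chi_{\widehat{f(M)}}$ regarded as a character of $N$ is the inflation of the character of $M[Q_i]$. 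Hence $T_{i,i}$ is obtained by evaluating the Green-correspondent characters $\chi_{\widehat{f(M)}}$ from Lemma~\ref{lem:tsmodsN} at $[N]_{\ell'}$, using the character table of $N$ and treating the exceptional sums exactly as in part~(b).

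Part~(d) carries the genuine structural content and is the step I expect to be the main obstacle to phrase cleanly. The claim is that the Brauer quotient $M[Q_v]$ is unchanged as $v$ decreases, provided $Q_v$ stays inside the vertex $Q_i$ and non-trivial; this is precisely Lemma~\ref{lem:equalgreencorrs}. Since $N_G(Q_j)=N$ for all $2\leq j\leq n+1$ by Lemma~\ref{lem:omnibus}(a)(i), its normaliser hypothesis holds, so for $M\in\TS(G;Q_i)$ (which lies in a block with cyclic defect group $C_{\ell^n}$) the lemma gives $M[Q_v]=M[Q_i]=f(M)$ for all $2\leq v\leq i$, where $f(M)$ is the common $kN$-Green correspondent, a module on which $Q_i$, and hence $Q_v$, acts trivially. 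Therefore, for each $s\in[N]_{\ell'}$, we have $\tau^{G}_{Q_v,s}([M])=\varphi_{f(M)}(s)=\tau^{G}_{Q_i,s}([M])$. The subtlety is the compatibility of the column labellings for different $v$, namely identifying $[\overline{N}_v]_{\ell'}$ with $[N]_{\ell'}$ through the quotient maps; this normalisation is exactly the one fixed in Notation~\ref{rem:lmidq-1}, and once it is in place the equality $T_{i,v}=T_{i,i}$ follows entry-by-entry from the equality of Brauer quotients.
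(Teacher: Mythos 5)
Your proposal is correct and follows essentially the same route as the paper's proof: part (a) via the vanishing criterion of Remark~\ref{rem:tsctbl}, part (b) by evaluating the characters of Lemma~\ref{lem:tsmodslmidq-1} at $[G]_{\ell'}$ via Table~\ref{tab:CTBLsl2}, part (c) by evaluating the Green-correspondent characters of Lemma~\ref{lem:tsmodsN} at $[N]_{\ell'}$ using Proposition~\ref{prop:omnts}(d), and part (d) via Lemma~\ref{lem:equalgreencorrs} together with $N_G(Q_v)=N$ from Lemma~\ref{lem:omnibus}(a)(i). Your two explicit refinements---that all exceptional characters agree on $\ell'$-classes and that the column labellings $[\overline{N}_v]_{\ell'}$ are identified with $[N]_{\ell'}$---are precisely the normalisations the paper builds into Notation~\ref{rem:lmidq-1} and the remark following Lemma~\ref{lem:tsmodslmidq-1}.
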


\noindent

\begin{landscape}
	%\thispagestyle{plain}
	%%%%%%%%%%%%%%%%%%
	% T_{1,k}
	%%%%%%%%%%%%%%%%%%
	
	\renewcommand*{\arraystretch}{1.5}
	\begin{longtable}{c|c||c|c|c|c|}
		\caption{$T_{i,1}$ for $1 \leq i \leq n+1$.}	
		\label{tab:lmidq-1T_i1}
		\\  \cline{2-6}
		
		&&
		\begin{tabular}{c}
			\textbf{$\varepsilon I_2$ } \vspace{-.8ex}  \\
			\textbf{	$ \varepsilon \in \{ \pm 1\}$}
		\end{tabular}
		& 
		\begin{tabular}{c}
			$\mathbf{d}(a)$ \vspace{-.8ex}  \\
			\footnotesize{$ a \in \Gamma_{\ell'}$}
		\end{tabular}
		&
		\begin{tabular}{c}
			$\mathbf{d}'(\xi)$ \vspace{-.8ex}  \\
			\footnotesize{$ \xi \in \Gamma'_{\ell'}$}
		\end{tabular}
		&
		\begin{tabular}{c}
			\textbf{$ \varepsilon u_{\tau}$} \vspace{-.8ex}  \\
			\textbf{	$ \varepsilon, \tau \in \{ \pm 1\}$}
		\end{tabular}
		\\ \hline \hline

		&$1_G + \Xi$
		& $1 + (q+1)\pi_q$
		& $1 + 2\pi_q$
		& $1$
		& $1 + \pi_q$
		\\ \cline{2-6}

		&$\St +  \Xi$
		& $q + (q+1)\pi_q$
		& $1 + 2\pi_q$
		& $-1$
		& $\pi_q$
		\\ \cline{2-6}
		
		&\begin{tabular}{c}
			$R_\pm(\alpha_0) + \Xi_{\alpha_0}$
		\end{tabular}
		& $\varepsilon^{\kappa}\left(\frac{q+1}{2}\right)(1 + 2\pi_q)$
		& $\alpha_0(a)(1 + 2\pi_q)$ 
		& $0$
		& $\varepsilon^{\kappa} \left(\frac{1 \pm \tau \sqrt{q_0}}{2} + \pi_q \right) $
		\\ \cline{2-6}
		
		$T_{1,1}$
		&\begin{tabular}{c}
			$R(\alpha) + \Xi_{\alpha}$\vspace{-1ex} 	\\
			\tiny{$\left(\alpha \in [T_{\ell'}^\wedge / \equiv], \alpha^2 \neq 1\right)$}
		\end{tabular}
		& $\alpha(\varepsilon) (q+1)(1 + 2\pi_q)$
		& $(\alpha(a) + \alpha(a^{-1}))(1 + 2\pi_q)$
		& $0$
		& $\alpha(\varepsilon)(1 + 2\pi_q)$
		\\ \cline{2-6}
		
		&$R'_\pm(\theta_0)$
		& $\varepsilon^{\kappa+1} \left( \frac{q-1}{2}\right) $
		& $0$
		& $-\theta_0(\xi)$
		& $\varepsilon^{\kappa + 1} \left(\frac{-1 \pm \tau \sqrt{q_0}}{2} \right) $
		\\ \cline{2-6}

		&\begin{tabular}{c}
			$R'(\theta)$ \vspace{-.8ex} \\
			\tiny{$\left(\theta \in [T_{\ell'}'^\wedge / \equiv], \theta^2 \neq 1\right)$}
		\end{tabular}
		& $\theta(\varepsilon) (q-1)$
		& $0$
		& $-\theta(\xi) - \theta(\xi^{-1})$
		& $-\theta(\varepsilon)$
		\\ 
		\hline
		\hline

		\multirow{4}{*}[-2ex]{
			\begin{tabular}{c}
				$T_{ i,1}$\\
				$(1 \leq i < n)$
			\end{tabular}}
		&	$1_G + \Xi_{i-1}$
		& $1 + (q+1)\pi_{q,i-1}$
		& $1 + 2\pi_{q,i-1}$
		& $1$
		& $1 + \pi_{q,i-1}$
		\\ \cline{2-6}

		&		
		$\St +  \Xi_{i-1}$
		& $q + (q+1)\pi_{q,i-1}$
		& $1 + 2\pi_{q,i-1}$
		& $-1$
		& $\pi_{q,i-1}$
		\\ \cline{2-6}

		&\begin{tabular}{c}
			$R_\pm(\alpha_0) + \Xi_{\alpha_0, i-1}$
		\end{tabular}
		& $\varepsilon^{\kappa}\left(\frac{q+1}{2}\right)(1 + 2\pi_{q,i-1})$
		& $\alpha_0(a)(1 + 2\pi_{q,i-1})$
		& $0$
		& $\varepsilon^{\kappa} \left(\frac{1 \pm \tau \sqrt{q_0}}{2} + \pi_{q,i-1} \right) $
		\\ \cline{2-6}

		&\begin{tabular}{c}
			$R(\alpha) + \Xi_{\alpha,i-1}$	\vspace{-1ex} 		\\
			\tiny{$\left(\alpha \in [T_{\ell'}^\wedge / \equiv], \alpha^2 \neq 1\right)$}
		\end{tabular}
		& $\alpha(\varepsilon)(q+1)(1 + 2\pi_{q,i-1})$
		& $(\alpha(a) + \alpha(a^{-1}))(1 + 2\pi_{q,i-1})$
		& $0$
		& $\alpha(\varepsilon) (1 + 2\pi_{q,i-1})$
		\\ 
		\hline  \hline 
		
	\multirow{4}{*}[-2ex]{
			$T_{n+1,1}$}
		&$1_G$
		& $1$
		& $1$
		& $1$
		& $1$
		\\ \cline{2-6}

		&$\St$
		& $q$
		& $1$
		& $-1$
		& $0$
		\\ \cline{2-6}

		&\begin{tabular}{c}
			$R_\pm(\alpha_0)$ 
		\end{tabular}
		& $\varepsilon^{\kappa}\left(\frac{q+1}{2}\right)$
		& $\alpha_0(a)$
		& $0$
		& $\varepsilon^{\kappa} \left(\frac{1 \pm \tau \sqrt{q_0}}{2}\right)$
		\\ \cline{2-6}

		&\begin{tabular}{c}
			$R(\alpha)$ \vspace{-1ex} \\
			\tiny{$\left(\alpha \in [T_{\ell'}^\wedge / \equiv], \alpha^2 \neq 1\right)$}
		\end{tabular}
		& $\alpha(\varepsilon)(q+1)$
		& $\alpha(a) + \alpha(a^{-1})$
		& $0$
		& $\alpha(\varepsilon)$
		\\ \hline

	\end{longtable}
\end{landscape}

%%%%%%%%%%%%%%%%%%
% T_{i+1,j+1}
%%%%%%%%%%%%%%%%%%

\renewcommand*{\arraystretch}{1.5}
\begin{longtable}{|c||c|c|c|c|}
	\caption{$T_{i,i}$ for $2 \leq  i \leq n+1$.} 
	\label{tab:lmidq-1T_ii}
	\\  \cline{1-5}
	
	&%&
	\begin{tabular}{c}
		\textbf{$\varepsilon I_2$ } \\
		\textbf{	$ \varepsilon \in \{ \pm 1\}$}
	\end{tabular}
	& 
	\begin{tabular}{c}
		$\mathbf{d}(a)$ \\
		\footnotesize{$a \in \Gamma_{\ell'}$}
	\end{tabular}
	&
	\begin{tabular}{c}
		\textbf{$\sigma_{\tau}$} \\
		\textbf{	$\tau \in \{ \pm 1\}$}
	\end{tabular}
	\\ \hline \hline 			
	
	%& 
	$1_G + \Xi_{i-1}$
	& $1 + 2\pi_{q, i-1}$
	& $1 + 2\pi_{q, i-1}$
	& $1$
	\\ \hline

	%	$T_{i+1,i+1}$&		
	$\St +  \Xi_{i-1}$
	& $1 + 2\pi_{q, i-1}$
	& $1 + 2\pi_{q, i-1}$
	& $-1$
	\\ \hline

	%($1 \leq i < n$)
	%&
	\begin{tabular}{c}
		$R_\pm(\alpha_0) + \Xi_{\alpha_0, i-1}$
	\end{tabular}
	& $\varepsilon^{\kappa}\left(1 + 2\pi_{q, i-1}\right)$
	&  $\alpha_0(a)(1 + 2\pi_{q, i-1})$
	&  $\pm \tau \sqrt{-1^{\kappa}}$
	\\ \hline

	%&
	\begin{tabular}{c}
		$R(\alpha) + \Xi_{\alpha, i-1}$		\vspace{-1ex} 	\\
		\tiny{$\left(\alpha \in [T_{\ell'}^\wedge / \equiv], \alpha^2 \neq 1\right)$}
	\end{tabular}
	& $2\alpha(\varepsilon) (1 + 2\pi_{q, i-1})$
	& $(\alpha(a) + \alpha(a^{-1}))(1 + 2\pi_{q, i-1})$
	& $0$
	\\ \hline 
	
	\hline

\end{longtable}

\begin{proof}
By Convention~\ref{conv:tsctbl} the labels for the rows of $\Triv_{\ell}(G)$ are the ordinary characters of the trivial source $kG$-modules determined in Lemma~\ref{lem:tsmodslmidq-1}. 
\begin{enumerate}[(a)]
   \item It follows from Remark~\ref{rem:tsctbl}(d) and Notation~\ref{rem:lmidq-1} that $T_{i,v} = \mathbf{0}$ whenever $v > i$. 
   \item By Remark~\ref{rem:tsctbl}(d), the values in  $T_{i,1}$  for $1 \leq i \leq n+1$ (Table \ref{tab:lmidq-1T_i1})  are calculated by evaluating the character of each trivial source module given in Table \ref{tab:tsmodslmidq-1} at the relevant representative of an $\ell'$-conjugacy class of $G$ using the character table of $G$ (Table \ref{tab:CTBLsl2}). 
\item By Convention~\ref{conv:tsctbl},  the values in $T_{i,i}$ for $2 \leq i \leq n+1$ (Table \ref{tab:lmidq-1T_ii}) are given by the values of the species~$\tau_{Q_{i},s}^{G}$, with $s$ running through  $[\overline{N}_{i}]_{\ell'}$ (identified here with $[N]_{\ell'}$), evaluated at the  trivial source modules $[M]\in\TS(G;Q_{i})$.  By definition of the species and Proposition~\ref{prop:omnts}(d) these are calculated by evaluating the ordinary character of the $kN$-Green correspondent given in Table~\ref{tab:tsmodsN} of the trivial source $kG$-module labelling the relevant row, at the representatives of the $\ell'$-conjugacy classes of $N$ using the character table of $N$ given in  \cite[Table 6.2]{BonBook}.
    \item For  each $2 \leq v \leq i \leq n+1$, by Convention~\ref{conv:tsctbl}, the matrix $T_{i,v}$ consists of the values of the species~$\tau_{Q_{v},s}^{G}$, with $s$ running through  $[\overline{N}_{v}]_{\ell'}$  (identified here with $[N]_{\ell'}$), evaluated at the trivial source modules $[M]\in\TS(G;Q_{i})$. However, by definition of the species, $\tau_{Q_{v},s}^{G}([M])=\chi_{\widehat{M[Q_v]}}(s)$ and  Lemma~\ref{lem:equalgreencorrs} shows that $M[Q_v]$ is the $kN$-Green correspondent of $M$. Hence $T_{i,v} = T_{i,i}$ for all $2 \leq v < i \leq n+1$. 
\end{enumerate}
\end{proof}

\normalsize{}

%%%--------------------------------------------------------------------------------------------------------
%%%-----------------------                       Section                    ------------------------------
%%%-------------------------------------------------------------------------------------------------------

\vspace{6mm}
\section{$\SL_2(q)$ with  $2\neq\ell\mid (q+1)$}
\label{sec:lmidq+1} 

In this section we assume that $G = \SL_2(q)$ with $q$ odd and $2 \neq \ell \mid q+1$.   By Lemma~\ref{lem:omnibusl=2}(a)(i) the Sylow $\ell$-subgroups of $G$ are also cyclic and we proceed as in Section~\ref{sec:lmidq-1}. 

\begin{nota}\label{rem:lmidq+1}
	We now adopt notation analogous to Notation~\ref{rem:lmidq-1}, in order to describe $\Triv_{\ell}(G)$ according to Convention~\ref{conv:tsctbl}. Here, we fix $Q_{n+1}:=S'_{\ell}\cong C_{\ell^{n}}$. Then, as before, for each $1\leq i\leq n$ we let $Q_{i}$ denote the unique cyclic subgroup of $Q_{n+1}$ of order $\ell^{i-1}$ and  
	\[\{1\} = Q_1 \leq \dots  \leq Q_{n+1} \in \Syl_{\ell}(G)\]
	is our fixed set of representatives for the conjugacy classes of $\ell$-subgroups of $G$.  We keep the same set of representatives for  the $\ell'$-conjugacy classes of $G$:
	$$[G]_{\ell'}:=\{\pm I_{2}\}\cup \{\mathbf{d}(a)\mid a\in\Gamma_{\ell'}\}\cup  \{\mathbf{d}'(\xi)\mid \xi\in\Gamma'_{\ell'}\}\cup\{\varepsilon u_{\tau}\mid \varepsilon,\tau\in\{\pm1\}\}\,,$$
	and we fix the following set of representatives for the $\ell'$-conjugacy classes of $N'$: 
	$$[N']_{\ell'}:=\{\pm I_{2}\}\cup  \{\mathbf{d}'(\xi)\mid \xi\in\Gamma'_{\ell'}\}\cup\{\sigma_{\tau}\mid \tau\in\{\pm1\}\}\,.$$
	By the same arguments as in Notation \ref{rem:lmidq-1}, for any $2\leq v\leq n+1$ and any $1\leq i\leq n+1$ we can label the columns of $T_{i,v}$ by this fixed set of representatives for  the $\ell'$-conjugacy classes of $N'$. Finally, for each $0\leq  i \leq n$ we fix
	\[
	\pi'_{q,i} :=\frac{(q+1)_\ell \cdot(1- \ell^{-i})}{2}\,,
	\]
	\[
\pi''_{q,i} := \frac{(q+1)_{\ell} - 1}{2} - \pi'_{q, i} = \frac{(q+1)_\ell \cdot\ell^{-i} - 1}{2}\,,
\]
	and let $\pi'_q := \pi'_{q,n}$. \\
Again, these numbers naturally come from the classification of the trivial source modules in blocks with cyclic defect goups in \cite{HL20}.	
\end{nota}

\vspace{2mm}
\subsection{The $\ell$-blocks and trivial source characters of $G$}

%%%%%%%%%%%%%%%%%%%%%%%%%%%%%%%%%%%%%%%%%%%%%%%%%%%%%%%%% 
% The blocks of $\SL_2(q)$ when $2 \neq \ell \mid q+1$
%%%%%%%%%%%%%%%%%%%%%%%%%%%%%%%%%%%%%%%%%%%%%%%%%%%%%%%%%
\begin{lem} {\label{lem:blockslmidq+1}}
	When $2 \neq \ell \mid q+1$ the $\ell$-blocks of $G$, their defect groups and their Brauer trees with type function are as given in Table~\ref{tab:blockslmidq+1}.
\end{lem}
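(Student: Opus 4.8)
The plan is to follow the proof of Lemma~\ref{lem:blockslmidq-1} almost verbatim, interchanging the roles of the split torus $T$ and the non-split torus $T'$. First I would invoke \cite[Chapters~8 and~9]{BonBook} to read off from Table~\ref{tab:blockslmidq+1} everything except the type functions: the distribution of $\Irr(G)$ into $\ell$-blocks, their defect groups, and the shapes of the Brauer trees together with the location of their exceptional vertices. The principal block $\bB_0(G)$ together with the blocks $A'_{\theta_0}$ and $A'_\theta$ now carry the full cyclic defect group $C_{\ell^n}=S'_\ell\leq T'$, whereas the blocks $A_{\alpha_0,\pm}$ and $A_\alpha$ built from the split torus have defect zero since $\ell\nmid q-1$; the latter carry no nontrivial Brauer tree and hence need no type function.

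It then remains to pin down the type functions, and for this I would evaluate the non-exceptional characters at a generator $x=\mathbf{d}'(\xi_0)$ of the unique subgroup $D_1\cong C_\ell$ of $D=S'_\ell$, where $\xi_0\in\mu_{q+1}$ has order $\ell$; as $\ell$ is odd we have $\xi_0\neq\pm1$, so $x$ lies in the class $\mathbf{d}'(\xi)$ of Table~\ref{tab:CTBLsl2}. Using that every $\ell'$-character of $T'$ takes the value $1$ at the $\ell$-element $\xi_0$, Table~\ref{tab:CTBLsl2} yields
\[
1_G(x)=1>0,\qquad \St(x)=-1<0,\qquad R'_\pm(\theta_0)(x)=-\theta_0(\xi_0)=-1<0,
\]
and $R'(\theta)(x)=-\theta(\xi_0)-\theta(\xi_0)^{-1}=-2<0$ for every nontrivial $\ell'$-character $\theta$ of $T'$ with $\theta^2\neq1$. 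Thus $1_G$ is positive while $\St$, $R'_\pm(\theta_0)$ and all the $R'(\theta)$ are negative. Since the type function alternates along each Brauer tree, these signs propagate to the full type functions once the shapes are known: in $\bB_0(G)$ the tree is $1_G-\St-\Xi$ with $\St$ central and the exceptional vertex $\Xi$ a leaf, so $1_G>0$ and $\St<0$ force $\Xi>0$; in $A'_{\theta_0}$ the exceptional vertex lies between the two negative endpoints $R'_\pm(\theta_0)$ and is therefore positive; and in each $A'_\theta$ the tree is the single edge joining the negative $R'(\theta)$ to its (hence positive) exceptional vertex.

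The one point needing care, where this case genuinely differs from $\ell\mid q-1$, is the placement of the exceptional vertex in $\bB_0(G)$: here $\St$ is \emph{negative}, so $1_G$ and $\St$ cannot both be ends of a three-vertex path carrying the exceptional vertex in the middle. This forces $\St$ to be the central vertex, which I would confirm against the degrees $\deg 1_G=1$, $\deg\St=q$ and the exceptional degree $q-1$ from Table~\ref{tab:CTBLsl2}, satisfying $\deg\St=\deg 1_G+(q-1)$ and so identifying $\St$ as the unique vertex incident to both edges. Once the tree shape is fixed in this way, the alternating type function is automatic and matches Table~\ref{tab:blockslmidq+1}.
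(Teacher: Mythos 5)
Your proposal is correct and follows essentially the same route as the paper: the block distribution, defect groups and Brauer tree shapes (including the position of the exceptional vertex) are quoted from \cite[Chapters~8 and~9]{BonBook}, and the type functions are determined exactly as in the paper's proof by evaluating the non-exceptional characters at an $\ell$-element of $T'$, using that $\ell'$-characters of $T'$ take the value $1$ there, so that $1_G>0$ while $\St$, $R'_\pm(\theta_0)$ and $R'(\theta)$ are negative. Your closing degree-count argument identifying $\St$ as the middle vertex of $\sigma(\bB_0(G))$ is a correct but redundant consistency check, since that placement is already part of the data imported from \cite{BonBook}.
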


\begin{longtable}{c||c|c|c}%|c|c}%|c}
	\caption{The $\ell$-blocks of $\SL_2(q)$ when $2 \neq \ell \mid q+1$.}	
	\label{tab:blockslmidq+1}
	\\  
	Block
	&	\begin{tabular}{c}
		Number of Blocks \\
		(Type)
	\end{tabular}
	& 	\begin{tabular}{c}
		Defect \\
		Groups
	\end{tabular}
	& 
	\begin{tabular}{c}
		Brauer Tree with Type\\
		Function /  $\Irr(\bB)$
	\end{tabular}
	\\ \hline \hline 
	
	$\mathbf{B}_0(G)$ 
	& 	\begin{tabular}{c}
		1 \\
		%	\\
		\small{(Principal)}
	\end{tabular}
	& 	$C_{\ell^n}$ 
	&	\begin{tabular}{c}
		$$ \xymatrix@R=0.0000pt@C=30pt{	
			{_+} & {_-}& {_+}\\
			{\Circle} \ar@{-}[r]  
			& {\Circle} \ar@{-}[r] 
			&{\CIRCLE} \\
			{^{1_G}}&{^{\St}}&{^{\Xi'}}
		}$$ \\
	{\footnotesize $\Xi':= \sum\limits_{\eta \in [S_{\ell}'^\wedge / \equiv] \setminus \{1\}} R'(\eta)$}\\
	\end{tabular}
	\\ \hline

	\begin{tabular}{c}
		$A'_{\theta_0}$ \\
		\small{($\theta_0 \in T_{\ell'}'^\wedge \setminus \{1\}, \theta_0^2 = 1$)}
	\end{tabular} 
	& 	\begin{tabular}{c}
		1 \\
		\small{(Quasi-isolated)}
	\end{tabular}
	& 	$C_{\ell^n}$ 		
	& 	\begin{tabular}{c}
		$$  \xymatrix@R=0.0000pt@C=30pt{	
			{_-} & {_+}& {_-}\\
			{\Circle}  \ar@{-}[r]
			& {\CIRCLE}  \ar@{-}[r]
			&{\Circle} \\
			{^{R_+'(\theta_0)}}&{^{\Xi'_{\theta_0}}}&{^{R_-'(\theta_0)}}
		}$$ \\
	{\footnotesize $\Xi'_{\theta_0}:= \sum\limits_{\eta \in [S_{\ell}'^\wedge / \equiv] \setminus \{1\}} R'(\theta_0\eta)$}\\
	\end{tabular}
	\\ \hline

	\begin{tabular}{c}
		$A'_\theta$  \\
		%	\\
		\small{($\theta \in [T_{\ell'}'^\wedge / \equiv], \theta^2 \neq 1$) }
	\end{tabular}
	& \begin{tabular}{c}
		$\frac{(q+1)_{\ell'} - 2}{2}$ \\
		%		\\
		\small{(Nilpotent)}
	\end{tabular}
	& $C_{\ell^n}$ 		
	& 	\begin{tabular}{c}
		$$  \xymatrix@R=0.0000pt@C=50pt{	
			{_-} & {_+}\\
			{\Circle}  \ar@{-}[r]
			& {\CIRCLE}    \\
			{^{R'(\theta)}}&{^{\Xi'_{\theta}}}
		} $$ \\
	{\footnotesize $\Xi'_{\theta}:= \sum\limits_{\eta \in S_{\ell}'^\wedge \setminus \{1\}} R'(\theta \eta)$}\\
	\end{tabular}
	\\ \hline 
	
	\begin{tabular}{c}
		$A_{\alpha_0,\pm}$ \\
		%	\\
		\small{	($\alpha_0 \in T_{\ell'}^\wedge \setminus \{1\}, \alpha_0^2 = 1$) }
	\end{tabular} 
	& \begin{tabular}{c}
		2 \\
		\small{(Defect zero)}
	\end{tabular}
	& 	$\{1\}$ 
	& 	\begin{tabular}{c}
		$\Irr(A_{\alpha_0,+}) = \{R_{+}(\alpha_0)\}$ \\
		$\Irr(A_{\alpha_0, -}) = \{R_{-}(\alpha_0)\}$ 
	\end{tabular}
	\\ \hline

	\begin{tabular}{c}
		$A_\alpha$ \\
		\small{($\alpha \in [T_{\ell'}^\wedge / \equiv], \alpha^2 \neq 1$)} 
	\end{tabular}
	& 	\begin{tabular}{c}
		$\frac{(q-1)_{\ell'} - 2}{2}$ \\
		%	\\
		\small{(Defect zero)}
	\end{tabular}
	& $\{1\}$ 
	& $\Irr(A_\alpha) = \{R(\alpha)\}$	
	\\ \hline 	
\end{longtable}

\begin{proof}
	As in Lemma~\ref{lem:blockslmidq-1}, the information in Table \ref{tab:blockslmidq+1} can be found in \cite[Chapters~8 and~9]{BonBook}, except for the type functions of the Brauer trees.  The type function of $\sigma(\bB_{0}(G))$ is immediate as the trivial character is positive. Here, $R_\pm'(\theta_0)$ and $R'(\theta)$ are negative for each $\theta \in T_{\ell'}'^\wedge \setminus \{1\}$, $\theta^2 \neq 1$, because $\theta_0$ and $\theta$ are $\ell'$-characters and take the value $1$ on $\ell$-elements. This determines the type functions of the remaining blocks. 
	%	
	%	Let $u$ be a generator of $D_1$. Since $u \in T'$ is an $\ell$-element and $\theta_0$ and $\theta$ are linear $\ell'$-characters of $T'$ for each $\theta \in T_{\ell'}'^\wedge \setminus \{1\}$, $\theta^2 \neq 1$, we know that $\theta_0(u) = \theta(u) = 1$ and so $R_\pm'(\theta_0)(u) = R'(\theta)(u) = -1$. Therefore $R_\pm'(\theta_0)$ and $R'(\theta)$ are negative for each $\theta \in T_{\ell'}'^\wedge \setminus \{1\}$, $\theta^2 \neq 1$. 
\end{proof}

\begin{lem}
	\label{lem:verticeschainandtsmodslmidq+1}
	When $2 \neq \ell \mid q+1$  the ordinary characters $\chi_{\widehat{M}}$ of the trivial source $kG$-modules~$M$  are as given in Table~\ref{tab:tsmodslmidq+1}, where for each $1 \leq i < n$, 
		\[ \Xi'_{i}   := \sum\limits_{j=1 }^{\pi'_{q,i}} R'(\eta_j)\quad\text{ and }\quad 
		\Xi'_{\theta_0, i}   := \sum\limits_{j=1}^{\pi'_{q,i}} R'(\theta_0\eta_j)  
	\]
	are sums of  $\pi'_{q,i}$ pairwise distinct exceptional characters  in $\Irr(\bB_{0}(G))$, resp. $\Irr(A'_{\theta_{0}})$, and  for any $\theta \in \Irr(T'_{\ell'})$ with $\theta^2 \neq 1$,  
	\[\Xi'_{\theta, i}   := \sum\limits_{j=1}^{2\pi'_{q,i} } R'(\theta\eta_j) \]		
		is a sum of $2\pi'_{q,i}$ pairwise distinct exceptional characters  in $\Irr(A'_{\theta})$.
\end{lem}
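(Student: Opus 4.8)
The plan is to follow, essentially verbatim, the strategy used in the proof of Lemma~\ref{lem:tsmodslmidq-1}, treating the trivial source modules vertex by vertex, but now with the roles of the split and non-split tori (and hence of Harish--Chandra and Deligne--Lusztig induction) interchanged. The starting observation is that every trivial source $kG$-module with a non-trivial vertex lies in one of the full-defect blocks $\bB_0(G)$, $A'_{\theta_0}$ or $A'_\theta$, each of which has cyclic defect group $C_{\ell^n}$; the remaining blocks $A_{\alpha_0,\pm}$ and $A_\alpha$ have defect zero and contribute only their unique PIM. Moreover, by Lemma~\ref{NG(D_1)} we have $W(\bB)=k$ for every block $\bB$ of $G$ with cyclic defect group, so the classification recorded in Remark~\ref{rem:tsmodulecyclicdef} applies directly, reading all data off the Brauer trees of Table~\ref{tab:blockslmidq+1}.

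First I would dispose of the trivial vertex. The PIMs in the defect-zero blocks $A_{\alpha_0,\pm}$ and $A_\alpha$ afford the characters $R_\pm(\alpha_0)$ and $R(\alpha)$, read off immediately from Table~\ref{tab:blockslmidq+1}; those in $\bB_0(G)$, $A'_{\theta_0}$ and $A'_\theta$ are obtained from the Brauer trees via $\chi_{\widehat{P(S)}}=\chi_b+\chi_c$ as in Remark~\ref{rem:tsmodulecyclicdef}(a). For the full vertex $C_{\ell^n}$ the trivial source modules are precisely the positive hooks, identified through Remark~\ref{rem:tsmodulecyclicdef}(b) together with the type functions established in Lemma~\ref{lem:blockslmidq+1}. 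Finally, for the intermediate (non-trivial, non-full) vertices $C_{\ell^i}$ I would invoke \cite[Theorem~5.3(b)(2) and Theorem~A.1(d)]{HL20} in $\bB_0(G)$ and $A'_{\theta_0}$, and \cite[Theorem~7.1(a)]{KL21} in the nilpotent blocks $A'_\theta$, exactly as in Lemma~\ref{lem:tsmodslmidq-1}. Together these produce the characters $1_G+\Xi'_i$, $\St+\Xi'_i$, $R'_\pm(\theta_0)+\Xi'_{\theta_0,i}$ and $R'(\theta)+\Xi'_{\theta,i}$ of Table~\ref{tab:tsmodslmidq+1}, with the asserted numbers $\pi'_{q,i}$ and $2\pi'_{q,i}$ of distinct exceptional constituents.

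The one point requiring genuine care, and the main (if modest) obstacle, is the orientation of the Brauer trees. In contrast to the case $2\neq\ell\mid q-1$, here the exceptional vertex is \emph{positive} while the non-exceptional leaf characters $\St$, $R'_\pm(\theta_0)$ and $R'(\theta)$ are \emph{negative} (as verified in Lemma~\ref{lem:blockslmidq+1} from the fact that $\theta_0$ and $\theta$ are $\ell'$-characters taking value $1$ on $\ell$-elements). Consequently the exceptional contribution \emph{grows} with the vertex instead of shrinking to zero, and I would have to check that the parametrization of \cite{HL20} indeed yields exactly $\pi'_{q,i}=\frac{(q+1)_\ell\,(1-\ell^{-i})}{2}$ exceptional constituents at the vertex $C_{\ell^i}$ -- so that, in particular, at the full vertex $C_{\ell^n}$ one recovers $\pi'_{q,n}=\frac{\ell^n-1}{2}=m$ in $\bB_0(G)$, i.e.\ the complete exceptional sum $\Xi'$. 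Once this matching of the exceptional multiplicities is confirmed, every remaining step is a direct transcription of the argument for $2\neq\ell\mid q-1$.
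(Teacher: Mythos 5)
Your overall skeleton matches the paper's proof: dispose of the PIMs via Table~\ref{tab:blockslmidq+1} and Remark~\ref{rem:tsmodulecyclicdef}(a), get the full-vertex modules as positive hooks via Remark~\ref{rem:tsmodulecyclicdef}(b) using $W(\bB)=k$, and handle intermediate vertices through the classification in \cite{HL20} and \cite{KL21}. But there is a genuine error at the intermediate vertices, and it is exactly the point you flagged yourself and then failed to carry through. You assert that the argument produces the characters $1_G+\Xi'_i$, $\St+\Xi'_i$, $R'_\pm(\theta_0)+\Xi'_{\theta_0,i}$ and $R'(\theta)+\Xi'_{\theta,i}$ ``of Table~\ref{tab:tsmodslmidq+1}'' --- but that is the $q-1$ pattern, not what Table~\ref{tab:tsmodslmidq+1} contains. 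Because the type functions are reversed here (exceptional vertex positive, the leaves $\St$, $R'_\pm(\theta_0)$, $R'(\theta)$ negative), the correct characters at vertex $C_{\ell^i}$ are $1_G+\St+\Xi'_i$ and $\St+\Xi'_i$ in $\bB_0(G)$, the \emph{purely exceptional} sum $\Xi'_{\theta_0,i}$ (occurring for two non-isomorphic modules) in $A'_{\theta_0}$, and $\Xi'_{\theta,i}$ alone in $A'_\theta$; no non-exceptional constituent appears outside the principal block. Taken literally, your proposal proves a different, incorrect table.

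The citation pattern reflects the same problem: invoking \cite[Theorem~5.3(b)(2)]{HL20} ``exactly as in Lemma~\ref{lem:tsmodslmidq-1}'' is not correct here. In $\sigma(\bB_0(G))$ the exceptional vertex is now a \emph{leaf} rather than the middle vertex, so the paper needs both Theorem~5.3(b)(2) \emph{and} (b)(3) of \cite{HL20} for the two modules in $\bB_0(G)$; and since the exceptional middle vertex of $\sigma(A'_{\theta_0})$ is positive (opposite sign to $\sigma(A_{\alpha_0})$ in the $q-1$ case), the relevant subcase there is Theorem~5.3(b)(7), which is what yields the purely exceptional characters $\Xi'_{\theta_0,i}$. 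Your closing observation that $\pi'_{q,i}$ grows with $i$ and that $\pi'_{q,n}=\frac{\ell^n-1}{2}=m$ recovers the full exceptional sum is correct and is the right sanity check --- but the check should have alerted you that the non-exceptional leaves must then \emph{drop out} of the intermediate-vertex characters in $A'_{\theta_0}$ and $A'_\theta$, rather than persist as in your list. To repair the proof, replace the appeal to (b)(2) by the case analysis just described and restate the resulting characters to agree with Table~\ref{tab:tsmodslmidq+1}.
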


\begin{longtable}{|c|c|c|}%|c}
	\caption{Trivial source characters of $\SL_2(q)$ when $2 \neq \ell \mid q+1$.}	
	\label{tab:tsmodslmidq+1}
	\\  \hline 
	\begin{tabular}{c}
		Vertices of $M$
	\end{tabular}
	& \begin{tabular}{c}
		Character $\chi_{\widehat{M}}$
	\end{tabular}
	& \begin{tabular}{c}
		Block containing $M$
	\end{tabular}
	\\ \hline \hline 
	
	% TRIVIAL VERTEX
	\multirow{5}{*}{$\{1\}$}
	&	$1_G + \St$, $\St + \Xi'$ 
	&
	$\mathbf{B}_0(G)$
	\\
	
	&$R'_\pm(\theta_0) + \Xi'_{\theta_0}$
	&$A'_{\theta_0}$
	\\
	
	&$R_\pm(\alpha_0)$
	&$A_{\alpha_0,\pm}$
	\\
	
	&$R'(\theta) + \Xi'_{\theta}$
	&$A'_{\theta}$, ($\theta \in [T_{\ell'}'^\wedge / \equiv]$, $\theta^2 \neq 1$)
	\\

	&$R(\alpha)$
	&$A_{\alpha}$, ($\alpha \in [T_{\ell'}^\wedge / \equiv]$, $\alpha^2 \neq 1$)
	\\
	
	\hline \hline 

	% VERTEX Q_i
	\multirow{3}{*}{
		\begin{tabular}{c}
			$C_{\ell^i}$ \\
			($1 \leq i < n$)
	\end{tabular}}
	&$1_G + \St + \Xi'_i$, 	%[HL Th 5.3 (b)(2)(i)]
	 $\St + \Xi'_i$			%[HL Th 5.3 (b)(3)(i)]
	&$\mathbf{B}_0(G)$
	\\

	&$\Xi'_{\theta_0, i}$, %[HL Th 5.3 (7)(i)]
	 $\Xi'_{\theta_0, i}$  %[HL Th 5.3 (7)(i)]
	&$A'_{\theta_0}$
	\\
	
	&$\Xi'_{\theta, i}$ %[HL Th 5.3 (a) (ii) + KL Th 7.1 (a)(ii)]
	&$A'_{\theta}$, ($\theta \in [T_{\ell'}'^\wedge / \equiv]$, $\theta^2 \neq 1$) 
	\\
	\hline \hline
	
	\multirow{3}{*}{
		$C_{\ell^n}$}
	&$1_G$, $\Xi'$
	&$\mathbf{B}_0(G)$
	\\

	&$\Xi'_{\theta_0}$, $\Xi'_{\theta_0}$
	&$A'_{\theta_0}$
	\\
	
	&$\Xi'_{\theta}$
	&$A'_{\theta}$, ($\theta \in [T_{\ell'}'^\wedge / \equiv]$, $\theta^2 \neq 1$)
	\\
	\hline 	
\end{longtable}

%\begin{longtable}{|c|l|}%|c}
%	\caption{Trivial source characters of $\SL_2(q)$ when $2 \neq \ell \mid q+1$.}	
%	\label{tab:tsmodslmidq+1}
%	\\  \hline 
%	\begin{tabular}{c}
%		Vertices of $M$
%	\end{tabular}
%	& \begin{tabular}{c}
%		Character $\chi_{\widehat{M}}$
%	\end{tabular}
%	\\ \hline \hline 
%	
%	$\{1\}$
%	
%	
%	
%
%	
%	& 
%	\begin{tabular}{l}
%		$1_G + \St$ \\
%		$\St + \Xi'$ \\
%		$R'_\pm(\theta_0) + \Xi'_{\theta_0}$ \\
%		$R'(\theta) + \Xi'_{\theta}$ ($\theta \in [T_{\ell'}'^\wedge / \equiv]$, $\theta^2 \neq 1$)\\
%		$R_\pm(\alpha_0)$\\
%		$R(\alpha)$ ($\alpha \in [T_{\ell'}^\wedge / \equiv]$, $\alpha^2 \neq 1$)\\
%	\end{tabular}
%	\\ \hline 	 \hline 
%	
%	
%	%%%%%%%%%%%%%%%%%%%%%%%%%%%%%%%
%	
%	\begin{tabular}{c}
%		$C_{\ell^i}$ \\
%		($1 \leq i < n$)
%	\end{tabular} 
%	& 	\begin{tabular}{l}
%		$1_G + \St + \Xi'(i)$ \\ %[HL Th 5.3 (b)(2)(i)] \\
%		$\St + \Xi'(i)$ \\ %[HL Th 5.3 (b)(3)(i)]
%		$\Xi'_{\theta_0}(i)$ \\ %[HL Th 5.3 (7)(i)]
%		$\Xi'_{\theta_0}(i)$ \\ %[HL Th 5.3 (7)(i)]
%		$\Xi'_{\theta}(i)$ ($\theta \in [{T'_{\ell'}}^\wedge / \equiv]$, $\theta^2 \neq 1$) %[HL Th 5.3 (a) (ii) + KL Th 7.1 (a)(ii)]	
%	\end{tabular}
%	\\ 	\hline 	 \hline 
%	%%%%%%%%%%%%%%%%%%%%%%
%	
%	$C_{\ell^n}$
%	& 
%	\begin{tabular}{l}
%		$1_G$ \\
%		$\Xi'$ \\
%		$\Xi'_{\theta_0}$ \\
%		$\Xi'_{\theta_0}$ \\
%		$\Xi'_{\theta}$ ($\theta \in [T_{\ell'}'^\wedge / \equiv]$, $\theta^2 \neq 1$)
%	\end{tabular}
%	\\  \hline 	
%	
%\end{longtable}

\begin{proof}
	The arguments are as in Lemma \ref{lem:tsmodslmidq-1}. We can read the ordinary characters of all PIMs from Table \ref{tab:blockslmidq+1} (with the help of Remark~\ref{rem:tsmodulecyclicdef}(a) for the PIMs in $\ell$-blocks not of defect zero). Since $W(\mathbf{B})$ is trivial for all cyclic $\ell$-blocks of $G$, the characters of trivial source modules with full vertex can be obtained from Table~\ref{tab:blockslmidq+1} and Remark~\ref{rem:tsmodulecyclicdef}(b). 
	
	Once again the trivial source modules $M\in\TS(G;C_{\ell^{i}})$ with $1< i<n$ require the most work. Such modules in $\bB_{0}(G)$ are given by \cite[Theorem~5.3(b)(2) and (3)]{HL20}, and those in $A'_{\theta_0}$ are given by \cite[Theorem~5.3(b)(7)]{HL20}. Their characters can then be identified using \cite[Theorem~A.1(d)]{HL20}. Finally, if $M\in\TS(G;C_{\ell^{i}})$ belongs to a nilpotent block $A'_{\theta}$  ${(\theta \in [T_{\ell'}'^\wedge / \equiv]}, {\theta^2 \neq 1})$, then $\chi_{\widehat{M}}=R(\alpha) + \Xi_{\alpha, i}$ by \cite[Theorem~7.1(a)]{KL21}. 
\end{proof}

\vspace{2mm}
%\newpage 
\subsection{The $\ell$-blocks and trivial source characters of $N'$}
\label{subsec:N'}
%
%Using the notation for the for the ordinary characters of $N'$ given in Section \ref{subsec:NandN'}, we fix the following. Let
%\[ \Xi'_{N'}(i)   := \sum\limits_{\substack{j=1 \\ \eta_j \in [{S'_{\ell}}^\wedge / \equiv] \setminus \{1\}}}^{\pi'_q(i)} \chi'_{\eta_j}, 
%\quad\text{ and }\quad 
%\Xi'_{N', \theta_0}(i)   := \sum\limits_{\substack{j=1 \\ \eta_j \in [{S'_{\ell}}^\wedge / \equiv] \setminus \{1\}}}^{\pi'_q(i)} \chi'_{\theta_0\eta_j},  
%\]
%and for any $\theta \in \Irr(T'_{\ell'})$ such that $\theta^2 \neq 1$, we let 
%\[\Xi'_{N',\theta}(i)   := \sum\limits_{\substack{j=1 \\ \eta_j \in {S'_{\ell}}^\wedge \setminus \{1\}}}^{2\pi'_q(i)} \chi'_{\theta\eta_j}. \]
%

%In the following lemma and in Theorem \ref{thm:l|q+1}, for each $0\leq  i \leq n$ we fix
%\[
%\pi''_{q,i} := \frac{(q+1)_{\ell} - 1}{2} - \pi'_{q, i} = \frac{(q+1)_\ell \cdot\ell^{-i} - 1}{2}\,.
%\]
%and let $\pi''_q := \pi''_{q,0}$. 

\begin{lem}
	\label{lem:tsmodsN'}
	When $2 \neq \ell \mid q+1$ the Brauer correspondents in $N'$ of the $\ell$-blocks of $G$ with non-trivial defect groups are as given in Table \ref{tab:blocksN'}, where the labelling of the non-principal blocks of $N'$ comes from \cite[Section 7.1.2]{BonBook}. 	The ordinary characters $\chi_{\widehat{f(M)}}$ of the $kN'$-Green correspondents $f(M)$ of the trivial source $kG$-modules $M$ with a non-trivial vertex are as given in Table~\ref{tab:tsmodsN'}, where for each $1 \leq i < n$,
	\[ \Xi^{N'}_i   := \sum\limits_{j=1}^{\pi''_{q,i}} \chi'_{\eta}, 
	\quad\text{ and }\quad 
	\Xi^{N'}_{\theta_0, i}  := \sum\limits_{j=1}^{\pi''_{q,i}} \chi'_{\theta_0\eta},  
	\]
	are sums of $\pi''_{q,i}$ pairwise distinct exceptional characters in $\Irr(\bB_0(N'))$, resp. $\Irr(\mathcal{O}N'b'_{\theta_0})$, and for any $\theta \in \Irr(T'_{\ell'})$ with $\theta^2 \neq 1$, 
	\[\Xi^{N'}_{\theta, i}   := \sum\limits_{j=1}^{2\pi''_{q,i}} \chi'_{\theta\eta} \]
	is a sum of $2 \pi''_{q,i}$ pairwise distinct exceptional characters of $\Irr(\mathcal{O}N'(b'_{\theta} + b'_{\theta^{-1}}))$. 
\end{lem}

\begin{longtable}{c||c|c|c}
	\caption{Brauer correspondents in $N'$ of the $\ell$-blocks of $G$, when $2 \neq \ell \mid q+1$.}	
	\label{tab:blocksN'}
	\\  
	\begin{tabular}{c} 
		Block of $G$ \\
		$\textbf{B}$
	\end{tabular}
	&	\begin{tabular}{c}
		Brauer correspondent \\
		$\textbf{b}$
	\end{tabular}
	& 	\begin{tabular}{c}
		Defect \\
		Groups
	\end{tabular}
	& 
	\begin{tabular}{c}
		Brauer Tree $\sigma(\textbf{b})$ \\
		with Type Function
	\end{tabular}
	\\ \hline \hline 			
	
	$\mathbf{B}_0(G)$ 
	& 	$\mathbf{B}_0(N')$  
	& 	$C_{\ell^n}$ 
	&	\begin{tabular}{c}
		$$ \xymatrix@R=0.0000pt@C=30pt{
			{_+} & {_-}& {_+}\\
			{\Circle} \ar@{-}[r]
			& {\CIRCLE} \ar@{-}[r] 
			&{\Circle} \\
			{^{1_{N'}}}&{^{\Xi^{N'}}}&{^{\varepsilon'}}
		}$$\\
		{\footnotesize $\Xi^{N'}:= \sum\limits_{\eta \in [S_{\ell}'^\wedge / \equiv] \setminus \{1\}} \chi'_{\eta}$}\\
	\end{tabular}
	\\ \hline

	\begin{tabular}{c}
		$A'_{\theta_0}$\\
		\small{($\theta_0 \in {T'_{\ell'}}^\wedge \setminus \{1\}$}, \small{$\theta_0^2 = 1$)} 
	\end{tabular} 
	&
	\begin{tabular}{c}
		$k\otimes_{\cO}\mathcal{O}N'b'_{\theta_0}$
	\end{tabular} 
	& 	$C_{\ell^n}$ 	
	& 	\begin{tabular}{c}
		$$  \xymatrix@R=0.0000pt@C=30pt{	
			{_+} & {_-}& {_+}\\
			{\Circle}  \ar@{-}[r]
			& {\CIRCLE}  \ar@{-}[r] 
			&{\Circle} \\
			{^{{\chi'^{+}_{\theta_0}}}}&{^{\Xi^{N'}_{\theta_0}}}&{^{{\chi'^-_{\theta_0}}}}
		}$$\\
	{\footnotesize $\Xi^{N'}_{\theta_0}:= \sum\limits_{\eta \in [S_{\ell}'^\wedge / \equiv] \setminus \{1\}} \chi'_{\theta_0 \eta}$}\\
	\end{tabular}
	\\ \hline  
	
	\begin{tabular}{c}
		$A'_{\theta}$ \\
		\small{($\theta \in [{T'_{\ell'}}^\wedge / \equiv]$}, \small{$\theta^2 \neq 1$)}
	\end{tabular}
	&
	\begin{tabular}{c}
		$k\otimes_{\cO}\mathcal{O}N'( b'_{\theta} + b'_{\theta^{-1}})$
	\end{tabular}
	& 	$C_{\ell^n}$ 		
	& 	\begin{tabular}{c}
		$  \xymatrix@R=0.0000pt@C=30pt{	
			{_+} & {_-}\\
			{\Circle} \ar@{-}[r] 
			& {\CIRCLE}    \\
			{^{\chi'_\theta}}&{^{\Xi^{N'}_{\theta}}}
		}$\\
	{\footnotesize $\Xi^{N'}_{\theta}:= \sum\limits_{\eta \in S_{\ell}'^\wedge \setminus \{1\}} \chi'_{\theta \eta}$}\\
	\end{tabular}
	\\ \hline 
	
\end{longtable}

\newpage 
\begin{longtable}{|c|l|l|}
	\caption{The trivial source characters of the $kN'$-Green correspondents when $2 \neq \ell \mid q+1$.}	
	\label{tab:tsmodsN'}
	\\  \hline 
	\begin{tabular}{c}
		Vertices of $M$
	\end{tabular}
	& \begin{tabular}{c}
		Character $\chi_{\widehat{M}}$
	\end{tabular}
	& \begin{tabular}{c}
		Character $\chi_{\widehat{f(M)}}$ of the \\
		Green correspondent 
	\end{tabular}
	\\ \hline \hline

	%%%%%%%%%%%%%%%%%%%%%%%%%%%%%%%
	
	\begin{tabular}{c}
		$C_{\ell^i}$ \\
		($1 \leq i < n$)
	\end{tabular} 
	& 	\begin{tabular}{l}
		$1_G + \St + \Xi'_i$ \\ %[HL Th 5.3 (b)(2)(i)] \\
		$\St + \Xi'_i$ \\ %[HL Th 5.3 (b)(3)(i)]
		$\Xi'_{\theta_0, i}$ \\ %[HL Th 5.3 (7)(i)]
		$\Xi'_{\theta_0, i}$ \\ %[HL Th 5.3 (7)(i)]
		$\Xi'_{\theta, i}$ ($\theta \in [{T'_{\ell'}}^\wedge / \equiv]$, $\theta^2 \neq 1$) %[HL Th 5.3 (a) (ii) + KL Th 7.1 (a)(ii)]	
	\end{tabular}
	& \begin{tabular}{l}
		$1_{N'} + \Xi^{N'}_i$\\ %[HL Th 5.3 (b)(2)(i)]
		$\varepsilon' + \Xi^{N'}_i$\\ %[HL Th 5.3 (b)(2)(i)]
		$\chi'^{+}_{\theta_0} + \Xi^{N'}_{\theta_0, i}$ \\ %[HL Th 5.3 (b)(2)(i)]
		$\chi'^{-}_{\theta_0} + \Xi^{N'}_{\theta_0, i}$ \\ %[HL Th 5.3 (b)(2)(i)]
		$\chi'_\theta + \Xi^{N'}_{\theta, i}$ %[HL Th 5.3 (a)(i)]
	\end{tabular}
	
	\\ 	\hline \hline 
	%%%%%%%%%%%%%%%%%%%%%%
	
	$C_{\ell^n}$
	& 
	\begin{tabular}{l}
		$1_G$ \\
		$\Xi'$ \\
		$\Xi'_{\theta_0}$ \\
		$\Xi'_{\theta_0}$ \\
		$\Xi'_{\theta}$ ($\theta \in [T_{\ell'}'^\wedge / \equiv]$, $\theta^2 \neq 1$)
	\end{tabular}
	&
	\begin{tabular}{l}
		$1_{N'}$ \\
		$\varepsilon'$ \\
		$\chi'^{+}_{\theta_0}$ \\
		$\chi'^{-}_{\theta_0}$ \\
		$\chi'_\theta$
	\end{tabular}
	\\  \hline 	
	
\end{longtable}

\begin{proof}
	The distribution of the characters of $N'$ into blocks can be determined by examining the values of the central characters modulo $\ell$ using the character table of $N'$ \cite[Table 6.3]{BonBook}: 
	\begin{itemize}
		\item the principal block contains $1_{N'}$, $\varepsilon'$ and $\chi'_{\eta}$ for each $\eta \in S_{\ell}^{'\wedge} \setminus \{1\}$,
		\item the second block contains $\chi'^\pm_{\theta_0}$ and $\chi'_{\theta_0 \eta}$ for each $\eta \in S_{\ell}^{'\wedge} \setminus \{1\}$, and 
		\item for each $\theta \in T_{\ell'}^{\wedge}$ with $\theta^2 \neq 2$, there is a block containing the characters $\chi'_{\theta \eta}$ for all $\eta \in S_{\ell}^{'\wedge}$.		
	\end{itemize} 
%The labelling of the non-principal blocks of $N'$ comes from \cite[Section 7.1.2]{BonBook}. 

	As for $N$, every block of $N'$ has maximal normal defect groups so all Brauer trees are star-shaped with exceptional character in the centre. As $1_{N'}$, $\varepsilon'$ and $\chi'^\pm_{\theta_0}$ are rational valued, they are in particular $\ell$-rational characters and so they are not exceptional. Furthermore, $\chi'_{\theta}$ is $\ell$-rational (and therefore non-exceptional) only if $\theta \in \Irr(T')$ is an $\ell'$-character. This determines the shape of the Brauer trees given in the table. Since the trivial character is positive, the type function of the principal block is immediate. Since $\theta_0$ and $\theta \in \Irr(T'_{\ell'})$ with $\theta^2 \neq 2$ are linear $\ell'$-characters, they take the value $1$ on $\ell$-elements. It follows from the character table of $N'$ that $\chi'^\pm_{\theta_0}$ and $\chi'_{\theta}$ are positive for every $\theta \in \Irr(T_{\ell'})$. 
	
	The module $W(\textbf{b})=W(\bB)$ is trivial for each block $\textbf{b}$ of $N'$ (see §\ref{subsec:CyclicBlocks}). The ordinary characters of the trivial source $kN'$-modules are obtained once again from the classification described in Remark \ref{rem:tsmodulecyclicdef}. In this case, for the listed characters of the trivial source $kN'$-modules with non-full vertex we use \cite[Theorem~5.3(b)(2),(7) and Theorem~A.1(d)]{HL20} and \cite[Theorem 7.1(a)]{KL21}.  
	
	Lastly, for any $1 \leq i \leq n$, if $[M]\in\TS(G;C_{\ell^{i}})$ is such that $\chi_{\widehat{M}}$ has $1_G$ as a constituent, then $[f(M)]\in\TS(N';C_{\ell^{i}})$ and $\chi_{\widehat{f(M)}}$ must have $1_{N'}$ as a constituent. This determines the correspondence of characters in the principal blocks of $G$ and $N'$. The two trivial source modules $[M],[M']\in\TS(G;C_{\ell^{i}})$ in $A'_{\theta_0}$ have the same ordinary character so up to the labelling of characters, the correspondences are also determined in this case. Finally, there is a unique choice for $\chi_{\widehat{f(M)}}$ for any trivial source module $[M]\in\TS(G;C_{\ell^{i}})$ in a nilpotent block of $G$ because, up to isomorphism,  such blocks contain a unique trivial source module with a given vertex $C_{\ell^{i}}$. 
\end{proof}

\vspace{2mm}

\subsection{The trivial source character table of $G$}
\label{sec:tschartablelmidq+1}

Recall that 
\[\kappa = \left\{ \begin{array}{ll} 0 & \mbox{ if } q \equiv 1 \pmod{4} \\ 1 & \mbox{ if } q \equiv 3 \pmod{4} .\end{array} \right. \]

\vspace{2mm}

\begin{thm}\label{thm:l|q+1}
	Suppose that $2 \neq \ell \mid q+1$. With the notation as in Notation \ref{rem:lmidq+1}, the trivial source character table $\Triv_{\ell}(G)= [T_{i,v}]_{1\leq i,v\leq n+1}$ is given as follows:
       \begin{enumerate}[{\,\,\rm(a)}] \setlength{\itemsep}{2pt}
		\item $T_{i,v} = \mathbf{0}$ if $v > i$;
		\item the matrices $T_{i,1}$ are as given in Table \ref{tab:lmidq+1T_i1} for each $1 \leq i \leq n+1$; 
		\item the matrices $T_{i,i}$ are as given in Table \ref{tab:lmidq+1T_ii} for each $2 \leq i \leq n$;  
		\item the matrix $T_{n+1,n+1}$ is as given in Table \ref{tab:lmidq+1T_n+1n+1}; and 
		\item $T_{i,v} = T_{i,i}$ for all $2 \leq v \leq i \leq n+1$.
	\end{enumerate} 	
\end{thm}
%\noindent
%Recall the notation used in Tables \ref{tab:lmidq-1T_i1} and \ref{tab:lmidq-1T_ii}: $T_{\ell'}= \langle a \rangle$, $T'_{\ell'}  = \langle b \rangle$,  $\Irr(T_{\ell'}) = \langle \alpha \rangle$, $\Irr(T'_{\ell'}) = \langle \theta \rangle$, $\zeta$ is a fixed primitive $(q-1)_{\ell'}$-root of unity such that $\alpha(a) = \zeta$ and $\psi$ a fixed primitive $(q+1)_{\ell'}$-root of unity such that $\theta(b) = \psi$. The definition of $\kappa$ also remains as before,	
%\[\kappa = \left\{ \begin{array}{ll} 0 & \mbox{ if } q \equiv 1 \pmod{4} \\ 1 & \mbox{ if } q \equiv 3 \pmod{4} .\end{array} \right. \]

\begin{landscape}
	
	%%%%%%%%%%%%%%%%%%
	% T_{1,k}
	%%%%%%%%%%%%%%%%%%
	
	\renewcommand*{\arraystretch}{1.5}
	\begin{longtable}{c|c||c|c|c|c|}
		\caption{$T_{i,1}$ for $1 \leq i \leq n+1$.}	
		\label{tab:lmidq+1T_i1}
		\\  \cline{2-6}
		
		&&
\begin{tabular}{c}
	\textbf{$\varepsilon I_2$ } \vspace{-.8ex}  \\
	\textbf{	$ \varepsilon \in \{ \pm 1\}$}
\end{tabular}
& 
\begin{tabular}{c}
	$\mathbf{d}(a)$ \vspace{-.8ex}  \\
	\footnotesize{$ a \in \Gamma_{\ell'}$}
\end{tabular}
&
\begin{tabular}{c}
	$\mathbf{d}'(\xi)$ \vspace{-.8ex}  \\
	\footnotesize{$ \xi \in \Gamma'_{\ell'}$}
\end{tabular}
&
\begin{tabular}{c}
	\textbf{$ \varepsilon u_{\tau}$} \vspace{-.8ex}  \\
	\textbf{	$ \varepsilon, \tau \in \{ \pm 1\}$}
\end{tabular}
\\ \hline \hline

		& $1_G + \St$ %$1_G + \Xi$
		& $1 + q$ %$1 + \pi_q(q+1)$
		& $2$ %$1 + 2\pi_q$
		& $0$ %$1$
		& $1$ %$1 + \pi_q$
		\\ \cline{2-6}

		& $\St + \Xi'$ %$\St +  \Xi$
		& $ q + (q-1)\pi'_q$ %$q + \pi_q(q+1)$
		& $1$ %$1 + 2\pi_q$
		& $-1 - 2\pi'_q$ %$-1$
		& $-\pi'_q $%$\pi_q$
		\\ \cline{2-6}
		
		&\begin{tabular}{c}
			$R'_\pm(\theta_0) + \Xi'_{\theta_0}$ %$R_\pm(\alpha_0) + \Xi_{\alpha_0}$
		\end{tabular}
		& $\varepsilon^{\kappa+1}\left(\frac{q-1}{2}\right)(1+2\pi'_q)$ %$\varepsilon^{\kappa}\left(\frac{q+1}{2} + \pi_q(q+1)\right)$
		& $0$%$(-1)^m(1 + 2\pi_q)$ 
		& $-\theta_0(\xi)(1 + 2 \pi'_q)$%$0$
		& $\varepsilon^{\kappa + 1}\left(\frac{-1 \pm \tau \sqrt{q_0}}{2} - \pi'_q\right)$ %$\varepsilon^{\kappa} \left(\frac{1 \pm \tau \sqrt{q_0}}{2} + \pi_q \right) $
		\\ \cline{2-6}
		
		$T_{1,1} $
		&\begin{tabular}{c}
			$R'(\theta) + \Xi'_{\theta}$ \vspace{-1ex} \\ %$R(\alpha^h) + \Xi_{\alpha^h}$	\\
			\tiny{$\left(\theta \in [T_{\ell'}'^\wedge / \equiv], \theta^2 \neq 1\right)$}
		\end{tabular}
		& $\theta(\varepsilon)(q-1)(1 + 2\pi'_q)$%$\varepsilon^h (1 + 2\pi_q)(q+1)$
		& $0$%$(1 + 2\pi_q)(\zeta^{mh} + \zeta^{-mh})$
		& $-(\theta(\xi) + \theta(\xi^{-1}))(1 + 2\pi'_q)$%$0$
		& $-\theta(\varepsilon)(1+ 2\pi'_q)$%$\varepsilon^h (1 + 2\pi_q)$
		\\ \cline{2-6}
		
		& $R_\pm(\alpha_0)$ %$R'_\pm(\theta_0)$
		& $\varepsilon^{\kappa} \frac{q+1}{2}$%$\varepsilon^{\kappa+1} \left( \frac{q-1}{2}\right) $
		& $\alpha_0(a)$%$0$
		& $0$ %$(-1)^{m+1}$
		& $\varepsilon^\kappa \left(\frac{1 \pm \tau \sqrt{q_0}}{2}\right)$%$\varepsilon^{\kappa + 1} \left(\frac{-1 \pm \tau \sqrt{q_0}}{2} + \pi_q \right) $
		\\ \cline{2-6}

		&\begin{tabular}{c}
			$R(\alpha)$ \vspace{-.8ex} \\	%$R'(\theta^h)$\\
			\tiny{$\left(\alpha \in [T_{\ell'}^\wedge / \equiv], \alpha^2 \neq 1\right)$}
		\end{tabular}
		& $\alpha(\varepsilon)(q+1)$%$\varepsilon^h (q-1)$
		& $\alpha(a) + \alpha(a^{-1})$%$0$
		& $0$%$- \psi^{mh} - \psi^{-mh}$
		& $\alpha(\varepsilon)$%$\varepsilon^{h+1}$
		\\ 
		\hline
		\hline

		& $1_G + \St + \Xi'_{i-1}$ %$1_G + \Xi(i)$
		& $1 + q + (q-1)\pi'_{q,i-1}$%$1 + \pi_q(i)(q+1)$
		& $2$ %$1 + 2\pi_q(i)$
		& $-2\pi'_{q,i-1}$ %$1$
		& $1 - \pi'_{q,i-1}$%$1 + \pi_q(i)$
		\\ \cline{2-6}

		&		
		$\St + \Xi'_{i-1}$ %$\St +  \Xi(i)$
		& $q + (q-1)\pi'_{q,i-1}$%$q + \pi_q(i)(q+1)$
		& $1$%$1 + 2\pi_q(i)$
		& $-1-2\pi'_{q,i-1}$%$-1$
		& $-\pi'_{q,i-1}$%$\pi_q^i$
		\\ \cline{2-6}

		$T_{ i,1}$
		&\begin{tabular}{c}
			$\Xi'_{\theta_0, i-1}$	%$R_\pm(\alpha_0) + \Xi_{\alpha_0}(i)$
		\end{tabular}
		& $\varepsilon^{\kappa + 1}(q-1)\pi'_{q,i-1}$%$\varepsilon^{\kappa}\left(\frac{q+1}{2} + \pi_q(i)(q+1)\right)$
		& $0$%$(-1)^m(1 + 2\pi_q(i))$
		& $-2\theta_0(\xi)\pi'_{q,i-1}$%$0$
		& $\varepsilon^\kappa \pi'_{q,i-1}$%$\varepsilon^{\kappa} \left(\frac{1 \pm \tau \sqrt{q_0}}{2} + \pi_q(i) \right) $
		\\ \cline{2-6}

		$(2 \leq i \leq n)$
		&\begin{tabular}{c}
			$\Xi'_{\theta_0,i-1}$	%$R_\pm(\alpha_0) + \Xi_{\alpha_0}(i)$
		\end{tabular}
		& $\varepsilon^{\kappa + 1}(q-1)\pi'_{q,i-1}$%$\varepsilon^{\kappa}\left(\frac{q+1}{2} + \pi_q(i)(q+1)\right)$
		& $0$%$(-1)^m(1 + 2\pi_q(i))$
		& $-2\theta_0(\xi)\pi'_{q,i-1}$%$0$
		& $\varepsilon^\kappa \pi'_{q,i-1}$%$\varepsilon^{\kappa} \left(\frac{1 \pm \tau \sqrt{q_0}}{2} + \pi_q(i) \right) $
		\\ \cline{2-6}

		&\begin{tabular}{c}
			$\Xi'_{\theta, i-1}$	\vspace{-1ex} \\%$R(\alpha^h) + \Xi_{\alpha^h}(i)$			\\
			\tiny{$\left(\theta \in [T_{\ell'}'^\wedge / \equiv], \theta^2 \neq 1\right)$}
		\end{tabular}
		& $2\theta(\varepsilon)(q-1)\pi'_{q,i-1}$%$\varepsilon^h (1 + 2\pi_q(i))(q+1)$
		& $0$%$(1 + 2\pi_q(i))(\zeta^{mh} + \zeta^{-mh})$
		& $-2\left(\theta(\xi) + \theta(\xi^{-1})\right)\pi'_{q,i-1}$ %$0$
		& $-2\theta(\varepsilon)\pi'_{q,i-1}$%$\varepsilon^h (1 + 2\pi_q(i))$
		\\ 
		\hline 		
		\hline

		& $1_G$ %$1_G$
		& $1$
		& $1$
		& $1$
		& $1$
		\\ \cline{2-6}

		& $\Xi'$ %$\St$
		& $(q-1)\pi'_q$%$q$
		& $0$ %$1$
		& $-2\pi'_q$%$-1$
		& $-\pi'_q$%$0$
		\\ \cline{2-6}
		
		$T_{n+1,1}$
		&\begin{tabular}{c}
			$\Xi'_{\theta_0}$%$R_\pm(\alpha_0)$ 
		\end{tabular}
		& $\varepsilon^{\kappa + 1}(q-1)\pi'_q$%$\varepsilon^{\kappa}\frac{q+1}{2}$
		& $0$%$(-1)^m$
		& $-2\theta_0(\xi)\pi'_q$%$0$
		& $\varepsilon^{\kappa}\pi'_q$%$\varepsilon^{\kappa} \frac{1 \pm \tau \sqrt{q_0}}{2}$
		\\ \cline{2-6}

		&\begin{tabular}{c}
			$\Xi'_{\theta_0}$%$R_\pm(\alpha_0)$ 
		\end{tabular}
		& $\varepsilon^{\kappa + 1}(q-1)\pi'_q$%$\varepsilon^{\kappa}\frac{q+1}{2}$
		& $0$%$(-1)^m$
		& $-2\theta_0(\xi)\pi'_q$%$0$
		& $\varepsilon^{\kappa}\pi'_q$%$\varepsilon^{\kappa} \frac{1 \pm \tau \sqrt{q_0}}{2}$
		\\ \cline{2-6}

		&\begin{tabular}{c}
			$\Xi'_{\theta}$ \vspace{-1ex} \\%$R(\alpha^h)$
			\tiny{$\left(\theta \in [T_{\ell'}'^\wedge / \equiv], \theta^2 \neq 1\right)$}
		\end{tabular}
		& $2\theta(\varepsilon)(q-1)\pi'_q$%$\varepsilon^h(q+1)$
		& $0$%$\zeta^{mh} + \zeta^{-mh}$
		& $-2\pi'_q(\theta(\xi) + \theta(\xi^{-1}))$%$0$
		& $-2\theta(\varepsilon)\pi'_q$%$\varepsilon^h $
		\\ \hline
		
	\end{longtable}
	
\end{landscape}

%%%%%%%%%%%%%%%%%%
% T_{i+1,j+1}
%%%%%%%%%%%%%%%%%%
\renewcommand*{\arraystretch}{1.5}
\begin{longtable}{|c||c|c|c|c|}
	\caption{$T_{i,i}$ for $2 \leq  i \leq n$.} 
	\label{tab:lmidq+1T_ii}
	\\  \cline{1-5}
	
	%&
	&
	\begin{tabular}{c}
		\textbf{$\varepsilon I_2$ } \vspace{-.8ex}\\
		\textbf{	$ \varepsilon \in \{ \pm 1\}$}
	\end{tabular}
	& 
	\begin{tabular}{c}
		$\mathbf{d}'(\xi)$ \vspace{-.8ex}  \\
		\footnotesize{$ \xi \in \Gamma'_{\ell'}$}
	\end{tabular}
	&
	\begin{tabular}{c}
		\textbf{$\sigma'_{\tau}$} \vspace{-.8ex}\\
		\textbf{	$\tau \in \{ \pm 1\}$}
	\end{tabular}
	\\ \hline \hline 			
	
	%& 
	$1_G + \St + \Xi'_{i-1}$ %$1_G + \Xi(i)$
	& $1 + 2\pi''_{q,i-1}$ %$1 + 2\pi_q(i)$
	& $1 + 2\pi''_{q,i-1}$%$1 + 2\pi_q(i)$
	& $1$ %$1$
	\\ \cline{1-5}

	%& 
	$\St + \Xi'_{i-1}$ %$\St +  \Xi(i)$
	& $1 + 2 \pi''_{q,i-1}$%$q + 2\pi_q(i)$
	& $1 + 2\pi''_{q,i-1}$%$1 + 2\pi_q(i)$
	& $-1$ %$-1$
	\\ \cline{1-5}

	%&
	\begin{tabular}{c}
		$\Xi'_{\theta_0, i-1}$%$R_\pm(\alpha_0) + \Xi_{\alpha_0}(i)$
	\end{tabular}
	& $\varepsilon^{\kappa + 1}\left(1 + 2\pi''_{q,i-1}\right)$%$\varepsilon^{\kappa}\left(1 + 2\pi_q(i)\right)$
	& $\theta_0(\xi)\left(1 + 2\pi''_{q,i-1}\right)$% $(-1)^m(1 + 2\pi_q(i))$
	& $\tau \sqrt{-1^{\kappa + 1}}$% $\pm \tau \sqrt{\varepsilon^{\kappa}}$
	\\ \cline{1-5}

	%&
	\begin{tabular}{c}
		$\Xi'_{\theta_0, i-1}$%$R_\pm(\alpha_0) + \Xi_{\alpha_0}(i)$
	\end{tabular}
	& $\varepsilon^{\kappa + 1}\left(1 + 2\pi''_{q,i-1}\right)$%$\varepsilon^{\kappa}\left(1 + 2\pi_q(i)\right)$
	& $\theta_0(\xi)\left(1 + 2\pi''_{q,i-1}\right)$% $(-1)^m(1 + 2\pi_q(i))$
	& $-\tau \sqrt{-1^{\kappa + 1}}$% $\pm \tau \sqrt{\varepsilon^{\kappa}}$
	\\ \cline{1-5}

	%&
	\begin{tabular}{c}
		$\Xi'_{\theta, i-1}$ \vspace{-1ex}\\ %$R(\alpha^h) + \Xi_{\alpha^h}(i)$			\\
		\tiny{$\left(\theta \in [T_{\ell'}'^\wedge / \equiv], \theta^2 \neq 1\right)$}
	\end{tabular}
	& $2\theta(\varepsilon)(1 + 2\pi''_{q,i-1})$%$2\varepsilon^h (1 + 2\pi_q(i))$
	& $\left(\theta(\xi) + \theta(\xi^{-1})\right)(1 + 2\pi''_{q,i-1})$%$(\zeta^{mh} + \zeta^{-mh})(1 + 2\pi_q(i))$
	& $0$%$0$
	\\ \hline 
	
\end{longtable}

%%%%%%%%%%%%%%%%%%
% T_{n+1,n+1}
%%%%%%%%%%%%%%%%%%
\renewcommand*{\arraystretch}{1.5}
\begin{longtable}{|c||c|c|c|c|}
	\caption{$T_{n+1,n+1}$.} 
	\label{tab:lmidq+1T_n+1n+1}
	\\  \cline{1-5}
	
	%&
	&
	\begin{tabular}{c}
		\textbf{$\varepsilon I_2$ } \vspace{-.8ex}\\
		\textbf{	$ \varepsilon \in \{ \pm 1\}$}
	\end{tabular}
	& 
	\begin{tabular}{c}
		$\mathbf{d}'(\xi)$ \vspace{-.8ex}  \\
		\footnotesize{$ \xi \in \Gamma'_{\ell'}$}
	\end{tabular}
	&
	\begin{tabular}{c}
		\textbf{$\sigma'_{\tau}$} \vspace{-.8ex}\\
		\textbf{	$\tau \in \{ \pm 1\}$}
	\end{tabular}
	\\ \hline \hline 			
	
	%& 
	$1_G$ %$1_G + \Xi(i)$
	& $1$ %$1 + 2\pi_q(i)$
	& $1$%$1 + 2\pi_q(i)$
	& $1$ %$1$
	\\ \cline{1-5}

	%& 
	$\Xi'$ %$\St +  \Xi(i)$
	& $1$%$q + 2\pi_q(i)$
	& $1$%$1 + 2\pi_q(i)$
	& $-1$ %$-1$
	\\ \cline{1-5}

	%&
	\begin{tabular}{c}
		$\Xi'_{\theta_0}$%$R_\pm(\alpha_0) + \Xi_{\alpha_0}(i)$
	\end{tabular}
	& $\varepsilon^{\kappa + 1}$%$\varepsilon^{\kappa}\left(1 + 2\pi_q(i)\right)$
	& $\theta_0(\xi)$% $(-1)^m(1 + 2\pi_q(i))$
	& $\tau \sqrt{-1^{\kappa + 1}}$% $\pm \tau \sqrt{\varepsilon^{\kappa}}$
	\\ \cline{1-5}

	%&
	\begin{tabular}{c}
		$\Xi'_{\theta_0}$%$R_\pm(\alpha_0) + \Xi_{\alpha_0}(i)$
	\end{tabular}
	& $\varepsilon^{\kappa + 1}$%$\varepsilon^{\kappa}\left(1 + 2\pi_q(i)\right)$
	& $\theta_0(\xi)$% $(-1)^m(1 + 2\pi_q(i))$
	& $-\tau \sqrt{-1^{\kappa + 1}}$% $\pm \tau \sqrt{\varepsilon^{\kappa}}$
	\\ \cline{1-5}

	%&
	\begin{tabular}{c}
		$\Xi'_{\theta}$ \vspace{-1ex}\\ %$R(\alpha^h) + \Xi_{\alpha^h}(i)$			\\
		\tiny{$\left(\theta \in [T_{\ell'}'^\wedge / \equiv], \theta^2 \neq 1\right)$}
	\end{tabular}
	& $2\theta(\varepsilon)$%$2\varepsilon^h (1 + 2\pi_q(i))$
	& $\theta(\xi) + \theta(\xi^{-1})$%$(\zeta^{mh} + \zeta^{-mh})(1 + 2\pi_q(i))$
	& $0$%$0$
	\\ \hline 
	
\end{longtable}

\begin{proof}
	The calculations for this table are completely analogous to those in the proof of Theorem~\ref{thm:l|q-1} except that we take the trivial source $kG$-modules from Table \ref{tab:tsmodslmidq+1}, their  Green correspondents from Table~\ref{tab:tsmodsN'} and we use the character table of $N'$ from~\cite[Table 6.3]{BonBook}.	
\end{proof}

\normalsize{}
	%%%--------------------------------------------------------------------------------------------------------
%%%-----------------------                       Section                    ------------------------------
%%%-------------------------------------------------------------------------------------------------------

%\newpage

{ \normalsize
	
\vspace{6mm}	
	
	\section{$\PSL_2(q)$ with $q\equiv\pm3\pmod{8}$ at $\ell=2$}\label{sec:PSLl=2}
	
	Throughout this section, we assume that  $\ell=2$ and we consider the group $\overline{G}=\PSL_2(q)$ with $q\equiv\pm3\pmod{8}$.  By Lemma~\ref{lem:omnibusl=2} the Sylow $2$-subgroups of  $\overline{G}$ are Klein-four groups $C_{2}\times C_{2}$. The $2$-blocks of $\overline{G}$ have defect groups isomorphic to  $C_{2}\times C_{2}$, $C_{2}$ or $\{1\}$, and in fact $\bB_{0}(\overline{G})$ is the unique block with full defect. Trivial source modules in blocks of defect zero are just PIMs of $k\overline{G}$, whereas trivial source modules in blocks of defect~$1$ are easily dealt with via Remark~\ref{rem:cyclicDefectC2C4}. Therefore, in this case, it essentially remains to describe the trivial source modules lying in the principal block. Moreover,  as source-algebra equivalences preserve vertices and sources of modules, it is enough to consider the principal block up to source-algebra equivalence, which reduces our computations to $\PSL_{2}(3)$ and $\PSL_{2}(5)$.
	\par
	
	\begin{nota}\label{nota:psl2ql=2}%
		In this section, in order to describe $\Triv_{2}(\overline{G})$ according to %Notation-Definition~\ref{def:TSCT} and 
		Convention~\ref{conv:tsctbl}, we adopt the following notation. 
		We let $Q_{3}$ be the Sylow $2$-subgroup of $\overline{G}$ defined in Lemma~\ref{lem:omnibusl=2}, that is,  
		$$Q_{3}=
		\begin{cases}
			\langle S_2', \sigma' \rangle/Z& \text{ if }q\equiv 3\pmod{8}, \\
			\langle S_2, \sigma \rangle/Z &\text{ if }q\equiv -3\pmod{8}. 
		\end{cases}
		$$
		It can be read from the character table of $\SL_{2}(q)$ (Table~\ref{tab:CTBLsl2}) that all involutions in~$\overline{G}$ are conjugate. Thus we may  fix the following  set of representatives for the conjugacy classes of $2$-subgroups of~$\overline{G}$:
		$$Q_{3}\cong C_{2}\times C_{2},\quad Q_1:=\{1\},\quad\text{and }\quad 
		Q_{2}:=
		\begin{cases}
			S_{2}'/Z\cong C_{2}& \text{ if }q\equiv 3\pmod{8}, \\
			S_{2}/Z\cong C_{2} &\text{ if }q\equiv -3\pmod{8}.
		\end{cases}$$
		%%Then, with the notation of \S\ref{}, by Lemma~\ref{lem:omnibusl=2}(a)(ii) we have  
		%%$$\overline{N}_{1}=\overline{G},\quad  \overline{N}_{2}=??,\quad \overline{N}_{3}\cong \fA_{4}/C_{2}\times C_{2}\cong C_{3}\,.$$
		We set 
		$\Gamma_{2'}:=[((\mu_{q-1})_{2'} \setminus \{1\})/\equiv]$,  $\Gamma'_{2'}:=[((\mu_{q+1})_{2'} \setminus \{1\})/\equiv]$. 
		Then, the structure of the quotients $\overline{N}_{i}=N_{\overline{G}}(Q_{i})/Q_{i}$ $(1\leq i\leq 3)$ follows from Lemma~\ref{lem:omnibusl=2}{\rm(b),(c)} and we choose the following sets of representatives of their $2'$-conjugacy classes:
		\begin{enumerate}[{\,\,\rm(i)}] \setlength{\itemsep}{2pt}
			\item 
			$\overline{N}_{1}=\overline{G}$ and we set 
			$$\phantom{XXXl}[\overline{N}_{1}]_{2'}:=\{I_{2}Z\}\cup\{{\bf d}(a)Z\mid a\in \Gamma_{2'}\}\cup\{{\bf d}'(\xi)Z\mid \xi \in \Gamma'_{2'}\}\cup\{u_{\tau}Z\mid\tau\in\{\pm1\}\}\,;$$
			\item 
			$$\overline{N}_{2}=
			\begin{cases}
				\langle T'/Z,\sigma 'Z\rangle/Q_{2}%\cong \langle T'_{2'}Z,\sigma 'Z\rangle
				\cong \mathcal{D}_{\frac{q+1}{2}}  & \text{ if }q\equiv 3\pmod{8},  \\
				\langle T/Z,\sigma Z\rangle/Q_{2}%\cong  \langle T_{2'}Z,\sigma Z\rangle
				\cong \mathcal{D}_{\frac{q-1}{2}}     & \text{ if }q\equiv -3\pmod{8},
			\end{cases}
			$$
			and  we set  
			$$[\overline{N}_{2}]_{2'}:=
			\begin{cases}
				\{1\}\cup\{\overline{{\bf d}'(\xi)Z}\in \overline{N}_{2} \mid \xi\in \Gamma'_{2'}\}  & \text{ if }q\equiv 3\pmod{8}, \\
				\{1\}\cup\{\overline{{\bf d}(a)Z} \in \overline{N}_{2}\mid a\in \Gamma_{2'}\}     & \text{ if }q\equiv -3\pmod{8},
			\end{cases}
			$$
			where $1$ simply denotes the trivial element of $\overline{N}_{2}$ and  the bar notation denotes left cosets in the \smallskip quotient $\overline{N}_{2}$;
			\item
			$\overline{N}_{3}\cong C_{3}=:\langle x\rangle$ and we set $[\overline{N}_{3}]_{2'}=:\{1,x,x^{2}\}$.
		\end{enumerate}
	\end{nota}
	\bigskip

	\vspace{4mm}
	\subsection{The $2$-blocks}\label{sec:blocksPSL2q}
	\enlargethispage{5mm}%TO BE DELETED

	\begin{lem} {\label{lem:blockslmidq-1}}
		When $q \equiv \pm3\pmod{8}$ the $2$-blocks of $\overline{G}=\PSL_2(q)$, a defect group, and their decomposition matrices, resp. Brauer trees with type function, are as given in Table~\ref{tab:blocksPSL2q3mod8} and~Table~\ref{tab:blocksPSL2q5mod8}.
	\end{lem}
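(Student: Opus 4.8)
The plan is to deduce everything for $\overline G=\PSL_2(q)$ from the data for $\SL_2(q)$ recorded in \cite{BonBook}, using the identification $\Irr(\overline G)=\{\chi\in\Irr(\SL_2(q))\mid Z\leq\ker\chi\}$. First I would run through the character table (Table~\ref{tab:CTBLsl2}) imposing $\chi(-I_2)=\chi(I_2)$ to decide which irreducible characters descend to $\overline G$: this is governed by $\alpha(-1)$ and $\theta(-1)$ for the $R(\alpha),R'(\theta)$, and by the parities $(-1)^{(q-1)/2}$, $(-1)^{(q+1)/2}$ for $R_\pm(\alpha_0),R'_\pm(\theta_0)$. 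These parities switch between $q\equiv 3$ and $q\equiv 5\pmod 8$, which is exactly why the two tables differ; in particular $R'_\pm(\theta_0)$ descend when $q\equiv 3\pmod 8$, whereas $R_\pm(\alpha_0)$ descend when $q\equiv 5\pmod 8$.

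Next I would obtain the partition into $2$-blocks and their defect groups directly from \cite[Chapters~8--9]{BonBook} (exactly as in Lemmas~\ref{lem:blockslmidq-1} and~\ref{lem:blockslmidq+1}), or recover them by reducing the central character values $\omega_\chi$ modulo $\fp$. The $2$-part of the degree then determines the defect: a character whose degree has full $2$-part $|\overline G|_2=4$ spans a defect-zero block, a character whose degree has $2$-part exactly $2$ lies in a block of cyclic defect $C_2$, and the odd-degree characters all lie in the principal block $\bB_0(\overline G)$, which is the unique block of full defect $\overline Q\cong C_2\times C_2$ (Lemma~\ref{lem:omnibusl=2}). For the non-principal blocks the remaining data is then routine: a defect-zero block contributes the single row $(1)$, while for each cyclic block one has $e=m=1$, so by Remark~\ref{rem:cyclicDefectC2C4} its Brauer tree is a single edge joining its two ordinary characters; I would fix the type function by evaluating those two characters, via Table~\ref{tab:CTBLsl2}, on the involution of $\overline G$ generating $Q_2$ (the image of a generator of $S_2'$, resp.\ $S_2$) and marking as positive the one taking a positive value.

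The substantive point is the decomposition matrix of $\bB_0(\overline G)$. Its four ordinary constituents are the odd-degree characters singled out above, namely $1_G,\St,R'_\pm(\theta_0)$ for $q\equiv 3\pmod 8$ and $1_G,\St,R_\pm(\alpha_0)$ for $q\equiv 5\pmod 8$. By Lemma~\ref{lem:omnibusl=2} we have $N_{\overline G}(\overline Q)\cong\fA_4$, so the inertial quotient is $C_3$ and $\bB_0(\overline G)$ is non-nilpotent with three simple modules; by the classification of blocks with Klein-four defect group up to source-algebra equivalence in \cite{CEKL}, it must then be source-algebra equivalent to $\bB_0(k\fA_4)$ or to $\bB_0(k\fA_5)$. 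To decide which, I would use that all decomposition numbers of a Klein-four block lie in $\{0,1\}$ and match degrees against the two admissible shapes: writing $1_G=\varphi_1$, the degree of $\St$ forces the $\fA_4$-shape (three characters reducing irreducibly, with $R'_+(\theta_0)=\varphi_2$, $R'_-(\theta_0)=\varphi_3$ and $\St=\varphi_1+\varphi_2+\varphi_3$, where $\deg\varphi_2=\deg\varphi_3=\tfrac{q-1}{2}$) precisely when $q\equiv 3\pmod 8$, and the $\fA_5$-shape ($R_+(\alpha_0)=\varphi_1+\varphi_2$, $R_-(\alpha_0)=\varphi_1+\varphi_3$ and $\St=\varphi_1+\varphi_2+\varphi_3$, again with $\deg\varphi_2=\deg\varphi_3=\tfrac{q-1}{2}$) precisely when $q\equiv 5\pmod 8$; the base cases $\overline G=\fA_4$ ($q=3$) and $\overline G=\fA_5$ ($q=5$) confirm the assignment.

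The main obstacle is exactly this last identification. The two candidate source-algebra classes share the same invariants $k(\bB_0)=4$ and $l(\bB_0)=3$, so they cannot be separated by counting characters or simple modules; one genuinely needs the source-algebra classification of \cite{CEKL} (which is also what will later let us transport trivial source modules, together with their vertices and sources, to $\fA_4$ and $\fA_5$) combined with the degree bookkeeping above --- or, alternatively, an appeal to the explicitly known $2$-modular decomposition matrices of $\PSL_2(q)$ --- in order to pin down both the shape of the matrix and the correct labelling of $\varphi_2,\varphi_3$ by the two constituents of degree $\tfrac{q-1}{2}$.
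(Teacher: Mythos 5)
Your proposal is correct, but it reconstructs the hardest piece --- the decomposition matrix of $\bB_0(\overline{G})$ --- by a genuinely different route than the paper. The paper's proof exploits the fact that $\overline{G}=G/Z$ with $Z$ a central $2$-subgroup: since $\ell=2$, every simple $kG$-module has $Z$ in its kernel, so each $2$-block of $G$ dominates a unique block of $\overline{G}$ with defect group $D/Z$, the same simple modules, and ordinary characters exactly those of the $G$-block trivial on $Z$; hence the block distribution, the number of blocks, the decomposition matrices and the Brauer trees all descend in one stroke from the $\SL_2(q)$ data in Tables~\ref{tab:blocksqcong3} and~\ref{tab:blocksqcong5} (quoted from \cite[Chapters~8 and~9]{BonBook}), and the only thing the paper actually computes is the type function --- exactly as you propose, by evaluating $R'(\theta)$ resp.\ $R(\alpha)$ at the image of the order-$4$ element $\mathbf{d}'(\xi)$ resp.\ $\mathbf{d}(a)$ via Table~\ref{tab:CTBLsl2}, obtaining $-2$ resp.\ $2$. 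Your alternative for the principal block --- inertial quotient $C_3$ from $N_{\overline{G}}(\overline{Q})\cong\fA_4$ rules out nilpotency, \cite{CEKL} limits the source-algebra class to $\bB_0(k\fA_4)$ or $\bB_0(k\fA_5)$, and the degree bookkeeping (the consistent identity $q=1+2\cdot\tfrac{q-1}{2}$ against the contradictions $q=q\mp2$) pins down the shape and the labelling --- is sound, and it effectively proves Corollary~\ref{cor:PuigEqC2xC2} simultaneously, inverting the paper's logical order, since the paper deduces that corollary \emph{from} the decomposition matrix established here. What each approach buys: the paper's domination argument is elementary and essentially free, given that the $\SL_2(q)$ tables are needed in Section~6 anyway; your route is self-contained at the level of $\overline{G}$ and delivers the source-algebra equivalence as a byproduct rather than a separate corollary, but it imports the full classification of \cite{CEKL} (which rests on the classification of finite simple groups) at a point where the paper needs only Bonnafé's decomposition matrices for $\SL_2(q)$.
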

	
	\begin{longtable}{c||c|c|c}%|c|c}%|c}
		\caption{{\textup{The $2$-blocks of $\PSL_2(q)$ when $q \equiv 3\pmod{8}$.}}}\vspace{-2mm}
		\label{tab:blocksPSL2q3mod8}
		\\  
		Block
		&	\begin{tabular}{c}
			Number of Blocks \\
			(Type)
		\end{tabular}
		& 	\begin{tabular}{c}
			Defect \\
			Groups
		\end{tabular}
		& 
		\begin{tabular}{c}
			Decomposition Matrix/\\Brauer Tree with Type Function 	
		\end{tabular}  
		\\ \hline \hline 
		$\bB_{0}(\overline{G})$ 
		& 	\begin{tabular}{c}
			$1$ \\
			\\
			(Principal) 
		\end{tabular}
		& 	\begin{tabular}{c}
			$
			%\langle S_2', s' \rangle/Z\cong
			C_{2}\times C_{2}$ %\\
			%			\\
			%			\small{(not cyclic)}
		\end{tabular}
		&  
		\begin{blockarray}{cccc}
			\vspace{-2.9ex} \\
			& {~} $k$ {~} & $\overline{S}_{+}$ & $\overline{S}_{-}$   \\ 
			\begin{block}{c(ccc)}
				$1_{\overline{G}}$					& $1$ & $0$ & $0$\\
				$\St$					& $1$ & $1$ & $1$\\
				$R_+'(\theta_0)$		& $0$ & $1$ & $0$\\
				$R_-'(\theta_0)$        & $0$ & $0$ & $1$\\
			\end{block}
		\end{blockarray}  	\vspace{-1.5ex} 
		\\ \hline 
		\vspace{-1.8ex} 
		\begin{tabular}{c}
			$A_\alpha$ \\
			\small{$(\alpha \in [T_{2'}^\wedge / \equiv] \setminus \{1\})$} 
			\\
		\end{tabular}	
		& \begin{tabular}{c}
			\vspace{-1.5ex}   \\
			$\frac{1}{2}\left( \frac{q-1}{2} - 1\right)$ \\
			\small{(Defect zero)} \\
			\\
		\end{tabular}
		& 		$
		%S_2/Z \cong 
		\{1\}$	
		&      $\Irr(A_\alpha) = \{R(\alpha)\}$
		\\ \hline 
		
		\begin{tabular}{c}
			$A'_\theta$  \\ 
			
			\small{$(\theta \in [T_{2'}'^\wedge / \equiv] \setminus \{1\})$} 
		\end{tabular}
		& 	\begin{tabular}{c}
			$\frac{1}{2}\left(\frac{q+1}{4} - 1\right)$ \\ 
			
			\small{(Nilpotent)} \\
		\end{tabular}
		& 	$
		%S_2'/Z \cong
		C_{2}$	
		& 	 \begin{tabular}{c}
			$$  \xymatrix@R=0.0000pt@C=50pt{	
				{_-} & {_+}\\
				{\Circle}  \ar@{-}[r]^{S_{\theta}} & {\Circle}    \\
				{^{R'(\theta)}}&{^{R'(\theta\eta^{2})}}
			} $$ \\
			
			\small{$(S_2'^\wedge = \langle \eta \rangle)$}\\
			\vspace{-.8ex}
		\end{tabular}
		\\ \hline 	
	\end{longtable}
	
	%\clearpage
	\begin{longtable}{c||c|c|c}%|c|c}%|c}
		\caption{{\textup{The $2$-blocks of $\PSL_2(q)$ when $q \equiv -3\pmod{8}$.}}}\vspace{-2mm}	
		\label{tab:blocksPSL2q5mod8}
		\\  
		Block
		&	\begin{tabular}{c}
			Number of Blocks \\
			(Type)
		\end{tabular}
		& 	\begin{tabular}{c}
			Defect \\
			Groups
		\end{tabular}
		& 
		\begin{tabular}{c}
			Decomposition Matrix/ \\ Brauer Tree with Type Function
			
		\end{tabular}
		\\ \hline \hline 
		
		$\bB_{0}(\overline{G})$ 
		& \begin{tabular}{c}
			$1$ \\
			\\
			(Principal)
		\end{tabular}
		& \begin{tabular}{c}
			$
			%\langle S_2, s \rangle/Z\cong 
			C_{2}\times C_{2}$ %\\
			%		\\
			%		\small{(not cyclic)}
		\end{tabular}
		&  
		\begin{blockarray}{cccc}
			\vspace{-2.9ex} \\
			& {~} $k$ {~} & $\overline{S}_+$ & $\overline{S}_-$   \\ 
			\begin{block}{c(ccc)}
				$1_{\overline{G}}$					&$1$&$0$&$0$\\
				$\St$					&$1$&$1$&$1$\\
				$R_+(\alpha_0)$		&$1$&$1$&$0$\\
				$R_-(\alpha_0)$        &$1$&$0$&$1$\\
			\end{block} 	
		\end{blockarray}   \vspace{-1.5ex} 
		\\ \hline 
		
		\begin{tabular}{c}
			$A_\alpha$\\
			\small{$\alpha \in [T_{2'}^\wedge / \equiv] \setminus \{1\}$} 
		\end{tabular}
		& 	\begin{tabular}{c}
			$\frac{1}{2}\left( \frac{q-1}{4} - 1\right)$ \\
			\small{(Nilpotent)} \\
		\end{tabular}
		& 	$
		%S_2/Z \cong 
		C_2$
		& 		\begin{tabular}{c}
			$$  \xymatrix@R=0.0000pt@C=30pt{	
				{_+} & {_-}\\
				{\Circle}  \ar@{-}[r]^{S_{\alpha}}  & {\Circle}    \\
				{^{R(\alpha)}}&{^{R(\alpha\eta^{2})}}
			}$$ \\
			\small{$(S_2^\wedge  = \langle \eta \rangle)$}\\
			\vspace{-1.5ex}	
		\end{tabular}
		\\ \hline 
		
		\begin{tabular}{c}
			$A'_\theta$  \\ 
			\small{$\theta \in [T_{2'}'^\wedge / \equiv] \setminus \{1\}$} 
			\\
		\end{tabular}
		& 	\begin{tabular}{c}
			\vspace{-1.2ex}     \\
			$\frac{1}{2}\left(\frac{q+1}{2} - 1\right)$ \\ 
			\small{(Defect zero)} 
			\vspace{1ex}  \\
		\end{tabular}
		& 	$%S_2'/Z \cong
		\{1\}$		
		&  $\Irr(A'_\theta) = \{R'(\theta)\}$
		\\ \hline 
		
	\end{longtable}
	\bigskip
	
	\begin{proof}
		Since $\overline{G}=G/Z=\SL_{2}(q)/\{\pm I_{2}\}$, the block distribution, the number of blocks, the decomposition matrices and the Brauer trees follow directly from Table~\ref{tab:blocksqcong3} and~Table~\ref{tab:blocksqcong5} containing the corresponding information for $\SL_{2}(q)$. Therefore, it only remains to compute the type function on the Brauer trees.\par 
		First assume that $q\equiv 3\pmod{8}$ and let $\bB\in \{A'_{\theta}\mid \theta \in [T_{2'}'^\wedge / \equiv] \setminus \{1\}\}$. A defect group of $\bB$ is $D:=S_2'/Z\cong C_{2}$  with generator $\mathbf{d'}(\xi)Z$, where  $\mathbf{d'}(\xi)$ is an element of order $4$ of~$T'$.  Moreover, in all cases,  $\theta$ is a $2'$-character of $T'$, so $\theta(\xi)=1$  and we read from the character table of $G$ (see Table~\ref{tab:CTBLsl2}) that $R'(\theta)(\mathbf{d'}(\xi))=-\theta(\xi)-\theta(\xi)^{-1}=-1-1=-2$. Hence we have $R'(\theta)<0$, determining the type function on $\sigma(\bB)$. 
		\par
		If  $q\equiv -3\pmod{8}$ and  $\bB\in \{A_{\alpha}\mid \alpha \in [T_{2'}^\wedge / \equiv] \setminus \{1\}\}$, then a defect group of $\bB$ is ${D:=S_2/Z\cong C_{2}}$ with generator  $\mathbf{d}(a)Z$, where  $\mathbf{d}(a)$ is an element of~$T$ of order~$4$.  Again, in all cases, $\alpha$ is  a $2'$-character of $T$,  so  $ \alpha(a) = 1$ and it follows from the character table of $G$ that  $R(\alpha)(\mathbf{d}(a)) =\alpha(a)+\alpha(a)^{-1}=1+1=2$. Thus we have $R(\alpha)>0$,  determining the type function on $\sigma(\bB)$.
	\end{proof}
	
	\begin{cor}\label{cor:PuigEqC2xC2}%
		If $\overline{G}=\PSL_2(q)$ with $q \equiv \pm3\pmod{8}$, then:
		\begin{enumerate}[{\,\,\rm(a)}] \setlength{\itemsep}{2pt}
			\item $\bB_{0}(\overline{G})\sim_{SA} \bB_{0}(\PSL_{2}(3))$ if  $q \equiv 3\pmod{8}$;
			\item $\bB_{0}(\overline{G})\sim_{SA} \bB_{0}(\PSL_{2}(5))$ if  $q \equiv -3\pmod{8}$.
		\end{enumerate}
	\end{cor}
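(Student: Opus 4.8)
The plan is to deduce this entirely from the classification of $2$-blocks with Klein four defect groups up to source-algebra equivalence established in \cite{CEKL}, using the block-theoretic data already computed for $\overline{G}=\PSL_2(q)$. By Lemma~\ref{lem:omnibusl=2}, when $q\equiv\pm3\pmod 8$ a Sylow $2$-subgroup of $\overline G$ is the Klein four group $\overline Q\cong C_2\times C_2$, and since $\bB_0(\overline G)$ is the unique $2$-block of full defect, its defect groups are isomorphic to $C_2\times C_2$. The classification \cite{CEKL} then asserts that $\bB_0(\overline G)$ is source-algebra equivalent to exactly one of $kV$ (with $V\cong C_2\times C_2$, the nilpotent case), $\bB_0(k\fA_4)=\bB_0(k\PSL_2(3))$, or $\bB_0(k\PSL_2(5))$. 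So the whole task reduces to deciding which of these three classes our block falls into.

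First I would discard the nilpotent case. A block source-algebra equivalent to $kV$ is nilpotent and hence has a single simple module, whereas the decomposition matrices recorded in Table~\ref{tab:blocksPSL2q3mod8} and Table~\ref{tab:blocksPSL2q5mod8} show that $\bB_0(\overline G)$ has three simple modules $k,\overline S_+,\overline S_-$. Thus $\bB_0(\overline G)$ is source-algebra equivalent either to $\bB_0(k\PSL_2(3))$ or to $\bB_0(k\PSL_2(5))$. Both $\fA_4$ and $\fA_5$ have inertial quotient $C_3$ and exactly three simple modules in the principal block, so these coarse invariants do not separate the two classes; the distinguishing datum is the decomposition matrix (equivalently the Cartan matrix), which is a Morita — and a fortiori a source-algebra — invariant. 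A direct comparison identifies the matrix of Table~\ref{tab:blocksPSL2q3mod8}, in which the Steinberg-type character decomposes as $k+\overline S_++\overline S_-$ while the two remaining non-trivial ordinary characters restrict to the single simples $\overline S_\pm$, with that of $\bB_0(k\fA_4)$, and the matrix of Table~\ref{tab:blocksPSL2q5mod8} with that of $\bB_0(k\fA_5)$. This yields precisely the assignment in the statement: $q\equiv 3\pmod 8$ realises the $\fA_4$ pattern and $q\equiv -3\pmod 8$ the $\fA_5$ pattern, giving (a) and (b) respectively.

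The main obstacle is conceptual rather than computational. The decomposition (or Cartan) matrix only separates the two non-nilpotent \emph{Morita} classes, so the real content needed is that \cite{CEKL} upgrades the Morita-level classification of Klein four blocks to a source-algebra (Puig) equivalence, collapsing each Morita class to a single source-algebra class represented by $\fA_4$, respectively $\fA_5$. Granting that theorem, everything else is routine: ruling out nilpotency by the count of simple modules and matching the two possible decomposition matrices against the ones already computed in Table~\ref{tab:blocksPSL2q3mod8} and Table~\ref{tab:blocksPSL2q5mod8}.
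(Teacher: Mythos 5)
Your proposal is correct and follows essentially the same route as the paper: the paper likewise invokes the main result of \cite{CEKL} classifying Klein-four blocks up to source-algebra equivalence into $k[C_2\times C_2]$, $k\fA_4$ and $\bB_0(k\fA_5)$, and then identifies the class directly from the $2$-decomposition matrices in Table~\ref{tab:blocksPSL2q3mod8} and Table~\ref{tab:blocksPSL2q5mod8}. You merely spell out the two steps the paper leaves implicit --- ruling out the nilpotent class by the count of three simple modules, and matching the decomposition-matrix patterns (which are Morita, hence source-algebra, invariants) to $\fA_4$ and $\fA_5$ respectively --- which is a faithful elaboration rather than a different argument.
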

	
	\begin{proof}
		The main result of \cite{CEKL} establishes that up to source-algebra equivalence a $2$-block with a  Klein-four defect group  is $k[C_{2}\times C_{2}]$, $k\fA_{4}$ or $\bB_{0}(\fA_{5})$.  Thus, as  $\fA_{4}\cong \PSL_{2}(3)$ and $\fA_{5}\cong \PSL_{2}(5)$,  the claim  follows directly from the $2$-decomposition matrix of $\bB_{0}(k\overline{G})$ in Table~\ref{tab:blocksPSL2q3mod8} and Table~\ref{tab:blocksPSL2q5mod8} respectively.
		%% that  
		%%\[
		%%\bB_{0}(k\overline{G})\sim_{SA}
		%%\begin{cases}
		%%\bB_{0}(\PSL_{2}(3))    & \text{if  } q\equiv 3\pmod{8}, \\
		%%\bB_{0}(\PSL_{2}(5))   & \text{if }q\equiv -3\pmod{8}.
		%%\end{cases}
		%%\]
	\end{proof}

	\vspace{2mm}
	\subsection{Trivial source modules in the principal block}
	
	We start with the description of the trivial source $k\overline{G}$-modules lying in the principal block, which is the unique block with defect groups which are not cyclic.
	
	\begin{lem}\label{lem:B0A4}%
		If $\overline{G}=\PSL_{2}(q)$ where  $q \equiv 3\pmod{8}$, then the trivial source $k\overline{G}$-modules and their ordinary characters are as given in Table~\ref{tab:psl23}.  %(in the notation of Table~\ref{tab:blocksPSL2q3mod8}) denote the two non-trivial one-dimensional modules of $\PSL_{2}(3)$. \\
		{%\footnotesize
			\renewcommand*\arraystretch{1.5}
			\begin{table}[!htpb]%
				\caption{Trivial source $\bB_{0}(\PSL_{2}(q))$-modules for  $q \equiv 3\pmod{8}$}\label{tab:psl23}\vspace{-5mm}
				\[\begin{array}{|c|c|c|}
					\hline
					\text{Vertices} 
					&	\text{Module }M
					&    \text{Character }\chi_{\widehat{M}} 
					\\ \hline \hline 
					\{1\}	
					& P_{k}
					& 1_{\overline{G}}+\St
					\\ 
					\{1\}	
					& P_{\overline{S}_{+}}
					& \St+R_+'(\theta_0)
					\\ 
					\{1\}
					& P_{\overline{S}_{-}}
					& \St+R_-'(\theta_0)  
					\\ \hline\hline
					C_{2}
					& \Sc(\overline{G},C_{2})
					& 1_{\overline{G}}+\St+R_+'(\theta_0)+R_-'(\theta_0)
					\\ \hline \hline
					C_{2}\times C_{2}	
					& k
					& 1_{\overline{G}}
					\\ 
					C_{2}\times C_{2}	
					& \overline{S}_{+}
					& R_+'(\theta_0)
					\\  
					C_{2}\times C_{2}	
					&  \overline{S}_{-} 
					& R_-'(\theta_0)  \\
					\hline	 
				\end{array}\]
			\end{table}
		}
	\end{lem}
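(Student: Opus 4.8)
The plan is to combine the source-algebra equivalence $\bB_{0}(\overline{G})\sim_{SA}\bB_{0}(\PSL_{2}(3))$ from Corollary~\ref{cor:PuigEqC2xC2}(a) with the isomorphism $\PSL_{2}(3)\cong\fA_{4}$, and then to reconstruct the ordinary characters from the decomposition matrix of Table~\ref{tab:blocksPSL2q3mod8} and the character table of $\SL_{2}(q)$. Since $V_{4}\cong C_{2}\times C_{2}$ is a normal Sylow $2$-subgroup of $\fA_{4}$, the group $\fA_{4}$ has a unique $2$-block, namely $\bB_{0}(\fA_{4})$. By Proposition~\ref{prop:omnts}(f) a source-algebra equivalence preserves trivial source modules together with their vertices, and being a Morita equivalence it preserves composition-factor multiplicities; hence the trivial source $\bB_{0}(\overline{G})$-modules are in a vertex- and composition-factor-preserving bijection with the trivial source $k\fA_{4}$-modules. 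Counting the latter vertex by vertex through Proposition~\ref{prop:omnts}(d) yields three modules of vertex $\{1\}$ (the PIMs, as $\bB_{0}$ has the three simple modules $k,\overline{S}_{+},\overline{S}_{-}$), one of vertex $C_{2}$, and three of full vertex $C_{2}\times C_{2}$ (matching the three $2'$-classes of $\overline{N}_{3}\cong C_{3}$); this accounts for the seven rows of Table~\ref{tab:psl23}.

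First I would read the three PIM characters directly off the decomposition matrix, obtaining $\chi_{\widehat{P_{k}}}=1_{\overline{G}}+\St$ and $\chi_{\widehat{P_{\overline{S}_{\pm}}}}=\St+R'_{\pm}(\theta_{0})$. For the full-vertex modules, the trivial module $k$ is always trivial source with a Sylow vertex, while under the equivalence $\overline{S}_{\pm}$ correspond to the two non-trivial simple $k\fA_{4}$-modules, which are inflated from $\fA_{4}/V_{4}\cong C_{3}$ and are therefore trivial source with vertex $V_{4}$; hence $\overline{S}_{\pm}$ are trivial source with vertex $C_{2}\times C_{2}$. As the trivial source lift $\widehat{\overline{S}_{+}}$ reduces to $\overline{S}_{+}$, its character is a non-negative integer combination of $\Irr(\bB_{0}(\overline{G}))$ whose restriction to the $2'$-classes equals the Brauer character of $\overline{S}_{+}$, and the columns of the decomposition matrix show that $R'_{+}(\theta_{0})$ is the only such combination; symmetrically $\chi_{\widehat{\overline{S}_{-}}}=R'_{-}(\theta_{0})$ and $\chi_{\widehat{k}}=1_{\overline{G}}$.

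The remaining, and hardest, case is the unique vertex-$C_{2}$ module, which by Proposition~\ref{prop:omnts}(e) must be $\Sc(\overline{G},C_{2})$. One first checks that $\Sc(\fA_{4},C_{2})=k[\fA_{4}/C_{2}]$ is indecomposable (its dimension is $\equiv 2\pmod 4$ while $\mathrm{Ext}^{1}_{k\fA_{4}}(k,k)=0$ rules out a two-dimensional summand), so for $q=3$ its permutation character is $1_{\overline{G}}+\St+R'_{+}(\theta_{0})+R'_{-}(\theta_{0})$ and each of $k,\overline{S}_{+},\overline{S}_{-}$ occurs twice as a composition factor; transporting these multiplicities through the equivalence gives the same for $\Sc(\overline{G},C_{2})$. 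Matching this Brauer character against the decomposition matrix leaves precisely the three candidates $2\cdot 1_{\overline{G}}+2R'_{+}(\theta_{0})+2R'_{-}(\theta_{0})$, $\;1_{\overline{G}}+\St+R'_{+}(\theta_{0})+R'_{-}(\theta_{0})$ and $2\St$, all of degree $2q$. This is the main obstacle: the composition factors alone do not determine the character.

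To break the tie I would evaluate at an involution $x\in\overline{G}$, chosen as the image of an element $\mathbf{d}'(\xi)\in T'$ of order $4$, so that $x$ generates the vertex $Q_{2}=C_{2}$. By Lemma~\ref{lem:tscharacters}(a), $\chi_{\widehat{\Sc(\overline{G},C_{2})}}(x)$ equals the number of trivial $k\langle x\rangle$-summands of $\Res^{\overline{G}}_{\langle x\rangle}\Sc(\overline{G},C_{2})$, that is, $\dim_{k}\Sc(\overline{G},C_{2})[C_{2}]$. By Proposition~\ref{prop:omnts}(d) this Brauer quotient is an indecomposable projective module in $\bB_{0}(\overline{N}_{2})$, where $\overline{N}_{2}\cong\mathcal{D}_{(q+1)/2}$ has Sylow $2$-subgroup $C_{2}$ by Lemma~\ref{lem:omnibusl=2}(b); by Remark~\ref{rem:cyclicDefectC2C4} the principal block of $\overline{N}_{2}$ has a unique PIM, whose ordinary character is the sum of the two linear characters of $\overline{N}_{2}$, so $\dim_{k}\Sc(\overline{G},C_{2})[C_{2}]=2$. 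Finally, evaluating the three candidates at $x$ by means of Table~\ref{tab:CTBLsl2}, using $\St(x)=-1$ and $R'_{\pm}(\theta_{0})(x)=-\theta_{0}(\xi)=1$ (here $\theta_{0}(\xi)=-1$ because $\xi$ has order $4$ while $\ker\theta_{0}$ contains no element of order $4$), gives the values $6$, $2$ and $-2$ respectively. Only $1_{\overline{G}}+\St+R'_{+}(\theta_{0})+R'_{-}(\theta_{0})$ is compatible, completing Table~\ref{tab:psl23}.
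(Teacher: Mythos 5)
Your proposal is correct, and it reaches Table~\ref{tab:psl23} by a genuinely different route precisely at the points where the paper leans hardest on the source-algebra equivalence of Corollary~\ref{cor:PuigEqC2xC2}. The paper transports the \emph{ordinary characters themselves} through the induced character bijection (legitimate because the equivalence preserves decomposition numbers, so the bijection respects the labelling), computes everything in $\fA_{4}$ --- where the simples afford linear characters and the indecomposability of $k\indhg{C_{2}}{\fA_{4}}$ is quoted from the known classification of indecomposable $k\fA_{4}$-modules --- and is then done. You instead transport only module-theoretic data (vertex-wise counts and composition-factor multiplicities) and re-derive the characters at general $q$: for the full-vertex modules via the observation that the decomposition matrix admits a unique non-negative solution with the prescribed Brauer character (your system $a+b=0$, $b+c=1$, $b+d=0$ indeed forces $R'_{+}(\theta_{0})$), and for the Scott module by listing the three candidates $2\cdot 1_{\overline{G}}+2R'_{+}(\theta_{0})+2R'_{-}(\theta_{0})$, $1_{\overline{G}}+\St+R'_{+}(\theta_{0})+R'_{-}(\theta_{0})$, $2\St$ and breaking the tie with Lemma~\ref{lem:tscharacters}(a): your identification $\chi^{}_{\widehat{\Sc(\overline{G},C_{2})}}(x)=\dim_{k}\Sc(\overline{G},C_{2})[Q_{2}]=2$ is correct, since the Brauer quotient is the unique PIM of $\bB_{0}(\overline{N}_{2})$ with $\overline{N}_{2}\cong\mathcal{D}_{(q+1)/2}$ (that it lies in the principal block uses Brauer's third main theorem, which you leave implicit), and the values $6$, $2$, $-2$ at $x$ (with $\theta_{0}(\xi)=-1$ for $\xi$ of order $4$, as you argue) single out the middle candidate; this evaluation essentially anticipates the computation the paper defers to the matrix $T_{2,2}$ in the proof of the trivial source character table theorem. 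Your indecomposability argument for $k\indhg{C_{2}}{\fA_{4}}$ (dimension $\equiv 2\pmod{4}$ together with $\Ext^{1}_{k\fA_{4}}(k,k)=0$) replaces the paper's citation and is sound, though note it tacitly uses your earlier count $|\TS(\fA_{4};C_{2})|=1$ and the multiplicity-one property of the Scott summand to force any complement to be projective of dimension divisible by $4$. In sum: your route costs a little extra computation but buys independence from the assertion that the character bijection of the Puig equivalence matches the labelling --- the one delicate step in the paper's reduction --- while the paper's route is shorter and defers the involution-value computation to where it is needed globally.
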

	
	\begin{proof}
		To begin with, by Corollary~\ref{cor:PuigEqC2xC2}, $\bB_{0}(\overline{G})$  is source-algebra equivalent to $\bB_{0}(\PSL_{2}(3))$. Now, on the one hand, source-algebra equivalences preserve vertices and sources, hence trivial source modules together with their vertices (see Proposition~\ref{prop:omnts}(f)). On the other hand, they are Morita equivalences, hence preserve simple modules and composition factors, so that using the induced character bijections and  the $2$-decomposition matrix of $\bB_{0}(\overline{G})$ in Table~\ref{tab:blocksPSL2q3mod8}, we may assume that $\overline{G}=\PSL_{2}(3)\cong\fA_{4}$. 
		\par
		Next, we note that when $q=3$ the principal block is the unique $2$-block of $k\overline{G}$. See  Table~\ref{tab:blocksPSL2q3mod8}.  
		Thus, the trivial source $\bB_{0}(\overline{G})$-modules account for all the trivial source  $k\overline{G}$-modules in this case.
		\par 
		The PIMs  are the projective covers $P_{k}, P_{\overline{S}_{+}},  P_{\overline{S}_{-}}$ of the three simple modules $k,\overline{S}_{+},\overline{S}_{-}$ and the ordinary characters they afford are read off from the $2$-decomposition matrix in Table~\ref{tab:blocksPSL2q3mod8}. Next, the representatives of the conjugacy classes of non-trivial $2$-subgroups are $Q_{3}\cong C_{2}\times C_{2}$ and $Q_{2}\cong C_{2}$, thus 
		by Notation~\ref{nota:psl2ql=2} we have $\overline{N}_{3}\cong C_{3}$ and $\overline{N}_{2}\cong C_{2}$. It follows then from  Proposition~\ref{prop:omnts}(d) that 
		$${|}\TS(\overline{G};C_{2}\times C_{2})|=3\qquad\text{and}\qquad {|}\TS(\overline{G};C_{2})|=1\,.$$
		Because the simple modules $k$, $\overline{S}_{+}$ and $\overline{S}_{-}$ are all liftable and afford the ordinary characters $1_{\overline{G}}$,~$R_+'(\theta_0)$ and $R_-'(\theta_0)$ respectively, which are all linear when $q=3$, we obtain directly that $k$, $\overline{S}_{+}$ and $\overline{S}_{-}$ account for all the trivial source $k\overline{G}$-modules with vertex $C_{2}\times C_{2}$.
		By Proposition~\ref{prop:omnts}(e), the unique trivial source $k\overline{G}$-module with vertex $C_{2}$ is the Scott module $\Sc(\overline{G},C_{2})$, and, by definition, it is a direct summand of $k\!\indhg{Q_{2}}{\overline{G}}$. However, this induced module is indecomposable (see e.g. \cite[Remark after Theorem 4.3.3]{BensonBookI}, which classifies the indecomposable $k\fA_{4}$-modules) and it is easy to compute that 
		\[1^{}_{C_{2}}\indhg{C_{2}}{\overline{G}}=1_{\overline{G}}+\St+R_+'(\theta_0)+R_-'(\theta_0)\,.\]
		The claim follows. 
	\end{proof}
	
	\begin{lem}\label{lem:B0A5}%
		If $\overline{G}=\PSL_{2}(q)$ where $q\equiv -3\pmod{8}$, then the trivial source $\bB_{0}(\overline{G})$-modules and their ordinary characters are as given in Table~\ref{tab:psl25}. 
		{%\footnotesize
			\renewcommand*\arraystretch{1.5}
			\begin{table}[!htpb]%htpb
				\caption{Trivial source $\bB_{0}(\PSL_{2}(q))$-modules for  $q \equiv -3\pmod{8}$}\label{tab:psl25}\vspace{-5mm}
				\[\begin{array}{|c|c|c|}
					\hline
					\text{vertices} 
					&	\text{module }M
					&    \text{character $\chi_{\widehat{M}}$} 
					\\ \hline \hline 
					\{1\}	
					& P_{k}
					& 1_{\overline{G}}+\St+R_+(\alpha_0)+R_-(\alpha_0)  
					\\ 
					\{1\}	
					& P_{\overline{S}_{+}}
					& \St+R_+(\alpha_0) 
					\\
					\{1\}	
					& P_{\overline{S}_{-}}
					& \St+R_-(\alpha_0)  
					\\ \hline\hline
					C_{2}
					&  \mbox{Sc}(\overline{G},C_{2}) & 1_{\overline{G}}+\St
					\\ \hline \hline
					C_{2}\times C_{2}	
					& k  		
					& 1_{\overline{G}}
					\\ 
					C_{2}\times C_{2}	
					& \boxed{
						\begin{smallmatrix}
							\overline{S}_{+}\\
							k\\
							\overline{S}_{-}
						\end{smallmatrix}
					}
					& \St
					\\ 
					&&	\vspace{-3ex} \\
					C_{2}\times C_{2}	
					& \boxed{
						\begin{smallmatrix}
							\overline{S}_{-}\\
							k\\
							\overline{S}_{+}
						\end{smallmatrix}
					}
					& \St	\\ 
					\hline	 
				\end{array}\]
			\end{table}
		}
	\end{lem}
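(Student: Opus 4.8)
The plan is to push the whole computation down to the single group $\fA_5\cong\PSL_2(5)$ and then analyse its principal block directly. By Corollary~\ref{cor:PuigEqC2xC2}(b) we have $\bB_0(\overline{G})\sim_{SA}\bB_0(\PSL_2(5))$, and by Proposition~\ref{prop:omnts}(f) a source-algebra equivalence preserves trivial source modules together with their vertices. Being in particular a Morita equivalence, it also preserves simple modules, composition factors and Loewy structure, and the induced character bijection is compatible with the decomposition matrix of Table~\ref{tab:blocksPSL2q5mod8}. Hence it suffices to exhibit the trivial source $\bB_0(\fA_5)$-modules and their characters; the rows of Table~\ref{tab:psl25} for general $q\equiv-3\pmod 8$ are then obtained by transporting this answer along the equivalence.

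The bookkeeping is then immediate. The three PIMs $P_k,P_{\overline{S}_+},P_{\overline{S}_-}$ and their characters are read off from the decomposition matrix of Table~\ref{tab:blocksPSL2q5mod8}. For the modules of positive vertex I would work inside $\fA_5$, where $N_{\fA_5}(Q_3)\cong\fA_4$ (Lemma~\ref{lem:omnibusl=2}(c)(ii)) so that $\overline{N}_3\cong C_3$, and $N_{\fA_5}(Q_2)\cong C_2\times C_2$ so that $\overline{N}_2\cong C_2$. By Proposition~\ref{prop:omnts}(d) this gives $|\TS(\fA_5;Q_3)|=3$ and $|\TS(\fA_5;Q_2)|=1$; since $\bB_0$ is the unique block of full defect, all three full-vertex modules lie in $\bB_0$, and the single vertex-$Q_2$ module is the Scott module $\Sc(\overline{G},Q_2)$ of Proposition~\ref{prop:omnts}(e). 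The trivial module $k=\Sc(\overline{G},Q_3)$, with character $1_{\overline{G}}$, is one of the full-vertex modules. Thus only the two remaining full-vertex modules and the Scott module need to be identified, which accounts for all seven rows.

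For the two non-trivial full-vertex modules I would use Green correspondence into $N_{\fA_5}(Q_3)\cong\fA_4$: their correspondents are the two non-trivial one-dimensional $k\fA_4$-modules $\lambda,\lambda^{*}$ (the full-vertex trivial source $k\fA_4$-modules of Lemma~\ref{lem:B0A4}), so each module is the vertex-$Q_3$ summand of $\Ind_{\fA_4}^{\fA_5}(\lambda)$, respectively $\Ind_{\fA_4}^{\fA_5}(\lambda^{*})$. As $[\fA_5:\fA_4]=5$ is odd these induced modules are again trivial source, and a short computation of the induced ordinary character shows $\Ind_{\fA_4}^{\fA_5}(\widehat{\lambda})=\St$; since $\St$ is irreducible each induced module is indecomposable and affords $\chi_{\widehat{M}}=\St$, with composition factors $k,\overline{S}_+,\overline{S}_-$ each once. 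Frobenius reciprocity gives $\Hom_{\fA_5}(\Ind_{\fA_4}^{\fA_5}\lambda,k)=\Hom_{\fA_4}(\lambda,k)=0$ and, since $\Ind=\operatorname{Coind}$, also $\Hom_{\fA_5}(k,\Ind_{\fA_4}^{\fA_5}\lambda)=\Hom_{\fA_4}(k,\lambda)=0$, so $k$ lies neither in the head nor in the socle; as each of $\overline{S}_{\pm}$ occurs once, the head and socle are forced to be simple and the module is uniserial of shape $\begin{smallmatrix}\overline{S}_{a}\\k\\\overline{S}_{b}\end{smallmatrix}$ with $\{a,b\}=\{+,-\}$. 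Finally $\Ind_{\fA_4}^{\fA_5}(\lambda^{*})\cong(\Ind_{\fA_4}^{\fA_5}\lambda)^{*}$ and the composition factors are self-dual, so the two modules are $k$-dual to one another with reversed socle and head; after fixing the labelling of $\overline{S}_{\pm}$ this yields exactly the modules $\begin{smallmatrix}\overline{S}_{+}\\k\\\overline{S}_{-}\end{smallmatrix}$ and $\begin{smallmatrix}\overline{S}_{-}\\k\\\overline{S}_{+}\end{smallmatrix}$ of the table.

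It remains to compute the character of $\Sc(\overline{G},Q_2)$. I would identify it as the unique indecomposable summand of $\Ind_{Q_2}^{\fA_5}(k)$ lying in $\bB_0$ with $k$ in its head (equivalently, as the Green correspondent of $\Sc(N_{\fA_5}(Q_2),Q_2)$), and read off its character using Lemma~\ref{lem:tscharacters}: its value at an involution equals the number of trivial summands in the restriction to a vertex, pinning down the $2$-singular values, while the $2'$-values are determined by its composition factors $2k+\overline{S}_++\overline{S}_-$. This gives $\chi_{\widehat{\Sc(\overline{G},Q_2)}}=1_{\overline{G}}+\St$. The step I expect to be most delicate is making the full-vertex analysis airtight, namely guaranteeing that the two $5$-dimensional modules really are uniserial with the stated socle and head rather than merely sharing their composition factors and the character $\St$; the head/socle computation via Frobenius reciprocity together with the duality $\Ind_{\fA_4}^{\fA_5}(\lambda^{*})\cong(\Ind_{\fA_4}^{\fA_5}\lambda)^{*}$ is precisely what upgrades the dimension count to the Loewy structure.
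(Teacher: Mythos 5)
Your overall architecture coincides with the paper's proof almost step for step: the reduction to $\fA_{5}$ via Corollary~\ref{cor:PuigEqC2xC2} and Proposition~\ref{prop:omnts}(f), the counts $|\TS(\overline{G};Q_{3})|=3$ and $|\TS(\overline{G};Q_{2})|=1$ from Proposition~\ref{prop:omnts}(d), reading the PIM characters from the decomposition matrix, and the treatment of the two non-trivial full-vertex modules (inducing the two non-trivial linear characters of $\fA_{4}$, observing the induced character is $\St$ and hence the modules are indecomposable, excluding $k$ from head and socle via $\langle \St,1_{\overline{G}}\rangle_{\overline{G}}=0$ — your Frobenius-reciprocity formulation is the same computation — and concluding uniseriality and mutual duality). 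That part is correct and is the paper's argument.

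The genuine gap is in your treatment of $\Sc(\overline{G},C_{2})$. You assert that its composition factors are $2k+\overline{S}_{+}+\overline{S}_{-}$, but this is never derived from anything, and it is essentially equivalent to the character claim you are proving. Worse, even granting those composition factors, the $2'$-values do \emph{not} determine the character: in $\bB_{0}(\overline{G})$ the sum $R_{+}(\alpha_{0})+R_{-}(\alpha_{0})$ has exactly the same decomposition numbers $(2,1,1)$ as $1_{\overline{G}}+\St$ (rows $(1,1,0)+(1,0,1)=(1,0,0)+(1,1,1)$ in Table~\ref{tab:blocksPSL2q5mod8}), so the two candidates agree on every $2'$-class. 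To close the step you must eliminate $R_{+}(\alpha_{0})+R_{-}(\alpha_{0})$, for instance by Lemma~\ref{lem:tscharacters}(a): at the involution $z$ generating the vertex one computes from Table~\ref{tab:CTBLsl2} that $\bigl(R_{+}(\alpha_{0})+R_{-}(\alpha_{0})\bigr)(z)=2\alpha_{0}(a)=-2<0$, which is impossible for a trivial source character, whereas $(1_{\overline{G}}+\St)(z)=2$; alternatively $\dim_{k}\Hom_{k\overline{G}}(\Sc(\overline{G},C_{2}),k)=\langle\chi_{\widehat{M}},1_{\overline{G}}\rangle\geq 1$ rules it out. Your sketch gestures at Lemma~\ref{lem:tscharacters} ("the number of trivial summands in the restriction to a vertex") but never computes that number nor performs the elimination, so as written the step does not close. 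Note the paper's route avoids composition factors entirely: by Proposition~\ref{prop:omnts}(e), $\Sc(\overline{G},C_{2})\cong\Sc(\overline{G},\overline{B})$ for $\overline{B}=B/Z$ of order $10$, the induced character is $1_{\overline{B}}\indhg{\overline{B}}{\overline{G}}=1_{\overline{G}}+\St$, and since the Scott module has even dimension (its vertex $C_{2}$ is strictly below the Sylow $2$-subgroup) while $1_{\overline{G}}$ and $\St$ both have odd degree, $k\indhg{\overline{B}}{\overline{G}}$ is forced to be indecomposable and equal to the Scott module, giving the character in one stroke.
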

	
	\begin{proof}
		Similarly to the proof of Lemma~\ref{lem:B0A4}, we may assume that $\overline{G}=\PSL_{2}(5)\cong \fA_{5}$ because by Corollary~\ref{cor:PuigEqC2xC2}, $\bB_{0}(\overline{G})$  is source-algebra equivalent to $\bB_{0}(\PSL_{2}(5))$. 
		\par
		First recall that the representatives of the conjugacy classes of non-trivial $2$-subgroups are $Q_{3}\cong C_{2}\times C_{2}$ and $Q_{2}\cong C_{2}$.
		Now, the group algebra $k\!\PSL_{2}(5)$ has exactly two $2$-blocks, namely the principal block (with a defect group equal to $Q_{3}$) and a block of defect zero. %$A'_{\theta}$, where $\langle \theta\rangle= T_{2'}'^\wedge\cong C_{3}$, $\theta(-1) =1$.
		See Table~\ref{tab:blocksPSL2q5mod8}. Thus, it follows that $\bB_{0}\left(\PSL_{2}(5)\right)$ contains three PIMs (see the $2$-decomposition matrix in Table~\ref{tab:blocksPSL2q5mod8}), and all other  trivial source $k\overline{G}$-modules with vertex $C_{2}\times C_{2}$ or $C_{2}$ lie in $\bB_{0}(\overline{G})$. \par
		By Notation~\ref{nota:psl2ql=2}  we have  $\overline{N}_{3}\cong C_{3}$ and $\overline{N}_{2}\cong C_{2}$, so it follows then from  Proposition~\ref{prop:omnts}(d) that 
		$${|}\TS(\overline{G};C_{2}\times C_{2})|=3\qquad\text{and}\qquad {|}\TS(\overline{G};C_{2})|=1\,.$$
		The PIMs in $\bB_{0}(\overline{G})$ are the projective covers of the three simple $\bB_{0}(\overline{G})$-modules $k,\overline{S}_{+},\overline{S}_{-}$ and the ordinary characters they afford are read off from the $2$-decomposition matrix in Table~\ref{tab:blocksPSL2q5mod8}. 
		\par
		As $N_{\overline{G}}(Q_{3})\cong \fA_{4}$ (see Lemma~\ref{lem:omnibusl=2}(b)(ii)), by Proposition~\ref{prop:omnts}(d), the three trivial source modules with vertex $Q_{3}$ are the Green correspondents in $\overline{G}$ of the three simple $1$-dimensional $k\fA_{4}$-modules obtained in Lemma~\ref{lem:B0A4}, which we denote in this proof by  $k,k_{\omega},k_{\omega}^{\ast}$ in order to avoid notation conflicts. The Green correspondent of the trivial module is the trivial module and clearly affords the trivial character~$1_{\overline{G}}$\,. 
		Next 
		\[\dim_{k}\left(k_{\omega}\indhg{\fA_{4}}{\overline{G}}\right)= \dim_{k}\left(k_{\omega}^{\ast}\indhg{\fA_{4}}{\overline{G}}\right)=5\]
		and if $1_{\omega}$ and $1_{\omega}^\ast$ denote the linear characters of $\fA_{4}$ afforded by $k_{\omega}$ and $k_{\omega}^{\ast}$, then it is easy to compute that 
		$$1_{\omega}\indhg{\fA_{4}}{\overline{G}}\,=1_{\omega}^{\ast}\indhg{\fA_{4}}{\overline{G}}=\St\,.$$
		It follows that $k_{\omega}\indhg{\fA_{4}}{\overline{G}}$ and its $k$-dual $k_{\omega}^{\ast}\indhg{\fA_{4}}{\overline{G}}$ are indecomposable and both afford the Steinberg character~$\St$.
		Then, we read from the $2$-decomposition matrix that  $k_{\omega}\indhg{\fA_{4}}{\overline{G}}$ has three composition factors, namely $k, \overline{S}_{+}, \overline{S}_{-}$ and because
		$$\Hom_{k\overline{G}}(k_{\omega}\indhg{\fA_{4}}{\overline{G}},k)=\langle \St ,1_{\overline{G}}\rangle_{\overline{G}}=0= \langle 1_{\overline{G}}, \St \rangle_{\overline{G}}=\Hom_{k\overline{G}}(k,k_{\omega}\indhg{\fA_{4}}{\overline{G}})$$
		the trivial module is neither in the head nor in the socle of $k_{\omega}\indhg{\fA_{4}}{\overline{G}}$. Thus, by duality, both $k_{\omega}\indhg{\fA_{4}}{\overline{G}}$ and  $k_{\omega}^{\ast}\indhg{\fA_{4}}{\overline{G}}$ are uniserial of length three with Loewy series
		$$
		\boxed{
			\begin{smallmatrix}
				\overline{S}_{+}\\
				k\\
				\overline{S}_{-}
			\end{smallmatrix}
		}
		\qquad\text{and}\qquad
		\boxed{
			\begin{smallmatrix}
				\overline{S}_{-}\\
				k\\
				\overline{S}_{+}
			\end{smallmatrix}
		}\,.
		$$
		\par
		It remains to compute the unique trivial source module with vertex $C_{2}$, which is the Scott module $\Sc(\overline{G},C_{2})$. Setting $\overline{B}:=B/Z$, we have $2\,||\,|\overline{B}|=|B|/2=10$ and so Proposition~\ref{prop:omnts}(e) yields
		$$\Sc(\overline{G},C_{2})\cong \Sc(\overline{G},\overline{B}),$$
		which is easier to compute. Indeed, by definition, $\Sc(\overline{G},\overline{B})\mid k\indhg{\overline{B}}{\overline{G}}$ and this induced module affords the character 
		$$1_{\overline{B}}\indhg{\overline{B}}{\overline{G}} \,=1_{\overline{G}}+\St$$
		since by~\cite[\S3.2.3]{BonBook} we have $1_{B}\indhg{B}{G}=R(1)=1_{G}+\St$. Now, on the one hand a vertex of $\Sc(\overline{G},C_{2})$ is~$C_{2}$,  so  $2\mid\dim_{k}(\Sc(\overline{G},C_{2}))$ and on the other hand both $1_{\overline{G}}$ and $\St$  have odd degree, therefore we obtain that 
		$$\Sc(\overline{G},C_{2})\cong \Sc(\overline{G},\overline{B})=k\indhg{\overline{B}}{\overline{G}}$$
		is indecomposable and the claim follows. 
	\end{proof}
	\bigskip

	\vspace{2mm}
	
	\subsection{The trivial source character table}

	\begin{thm}%i.e. q=5 mod 8 and q=1 mod 4
		Assume  $\overline{G}=\PSL_{2}(q)$ where  $q\equiv \pm3\pmod{8}$. 
		Then the trivial source character table $\Triv_{2}(\overline{G})= [T_{i,v}]_{1\leq i,v\leq 3}$ is given as follows:
		\begin{enumerate}[{\,\,\rm(a)}] \setlength{\itemsep}{2pt}
			\item for $q\equiv \pm3\pmod{8}$, $T_{1,2}=T_{1,3}=T_{2,3}=\mathbf{0}$;
			\item if $q\equiv 3\pmod{8}$, then the matrices $T_{i,v}$ with $1\leq v\leq i\leq 3$ are as given in Table~\ref{tab:Tiapsl2q=3mod8}; and 
			{\footnotesize
				\renewcommand*\arraystretch{1.5}
				\begin{table}[!htpb]
					\caption{$T_{i,v}$  with $i\geq v$ when $q\equiv 3\pmod{8}$}\vspace{-5mm}
					\label{tab:Tiapsl2q=3mod8}
					\[
					\begin{array}{r|c||c|c|c|c|}
						\cline{2-6}	
						&
						&  I_{2}Z
						&	\begin{tabular}{c}
							${\bf d}(a)Z $\\
							\scriptsize{$a\in \Gamma_{2'}$}
						\end{tabular}
						&     \begin{tabular}{c}
							${\bf d}'(\xi)Z$\\
							\scriptsize{$\xi\in\Gamma'_{2'}$}
						\end{tabular}
						&       \begin{tabular}{c}
							$u_{\tau}Z$\\
							\scriptsize{$\tau\in\{\pm\}$}
						\end{tabular}
						\\ \hline \hline 	
						&     1_{\overline{G}}+\St 
						&	q+1
						&      2
						&      0
						&      1
						\\  \cline{2-6}		
						&      \St+R_{+}'(\theta_{0})
						&	\frac{3q-1}{2}
						&      1
						&      -2
						&        \frac{-1 +  \tau \sqrt {-q}}{2}
						\\  \cline{2-6}	    
						&  \St+R_{-}'(\theta_{0})
						&	\frac{3q-1}{2}
						&      1
						&      -2
						&        \frac{-1 -  \tau \sqrt {-q}}{2}
						\\  \cline{2-6}	
						T_{1,1}
						&    \begin{tabular}{c}
							$R'(\theta)+R'(\theta\eta^{2})$\\
							\scriptsize{$(\theta \in [T_{2'}'^\wedge / \equiv] \setminus \{1\}$,} \\
							\scriptsize{$S_{2}'^{\wedge}=\langle \eta \rangle)$}\\
						\end{tabular}
						&	2(q-1)
						&      0
						&      
						2(-\theta(\xi)-\theta(\xi)^{-1})
						&      -2
						\\  \cline{2-6}	
						&   \begin{tabular}{c}
							$R(\alpha)$\\
							\scriptsize{$(\alpha \in [T_{2'}^\wedge / \equiv] \setminus \{1\})$} 
						\end{tabular}
						&	q+1
						&    \alpha(a)+\alpha(a)^{-1}  
						&      0
						&      1
						\\ \hline \hline 	
						& 1_{\overline{G}}+\St+R_+'(\theta_0)+R_-'(\theta_0)
						&	2q
						&      2
						&      -2
						&       0
						\\ \cline{2-6}	
						T_{2,1}
						&     \begin{tabular}{c}
							$R'(\theta\eta^{2})$\\
							\scriptsize{$(\theta \in [T_{2'}'^\wedge / \equiv] \setminus \{1\}$, } \\
							\scriptsize{$S_{2}'^{\wedge}=\langle \eta \rangle)$}
						\end{tabular}
						&	q-1
						&      0
						&      -\theta(\xi)-\theta(\xi)^{-1}
						&      -1 
						\\ \hline \hline 	 
						&   1_{\overline{G}}
						&	1
						&      1
						&      1
						&      1
						\\ \cline{2-6}	
						T_{3,1}
						&  R_{+}'(\theta_{0})
						&  \frac{q-1}{2}
						&   0
						&   -1
						&  \frac{-1 +  \tau \sqrt {-q}}{2}
						\\  \cline{2-6}	
						& R_{-}'(\theta_{0})
						&   \frac{q-1}{2}
						&    0
						&  -1
						&   \frac{-1 - \tau \sqrt {-q}}{2}
						\\
						\hline\hline	
					\end{array}
					\]
					%\end{table}
					%}
					%
					
					%{\footnotesize
					%\renewcommand*\arraystretch{1.5}
					%\begin{table}[!htpb]%
					%\label{tab:T33psl2q=3mod8}
					\[
					\begin{array}{r|c||c|c|}
						\cline{2-4}	
						&    
						&  1
						&     \begin{tabular}{c}
							$\overline{{\bf d}'(\xi)Z}$\\
							\scriptsize{$\xi\in \Gamma'_{2'}$}
						\end{tabular}   
						\\ \hline \hline 	
						& 1_{\overline{G}}+\St+R_+'(\theta_0)+R_-'(\theta_0)
						&	2   
						&       2     
						\\ \cline{2-4}	
						\begin{tabular}{c}
							$T_{2,2}\!\!\!$\\
							\small{\phantom{X}} \\
						\end{tabular}
						&     \begin{tabular}{c}
							$R'(\theta\eta^{2})$\\
							\scriptsize{$(\theta \in [T_{2'}'^\wedge / \equiv] \setminus \{1\}$,}\\
							\scriptsize{$S_{2}'^{\wedge}=\langle \eta \rangle)$}
						\end{tabular}
						&	2
						&       \theta(\xi)+\theta(\xi)^{-1} 
						\\ \hline \hline 	 
						&   1_{\overline{G}}
						&	1
						&      1    
						\\ \cline{2-4}	
						T_{3,2}
						&  R_{+}'(\theta_{0})
						&  1
						&   1
						\\  \cline{2-4}	
						& R_{-}'(\theta_{0})
						&  1
						&  1 
						\\
						\hline\hline			
					\end{array}
					\quad\qquad
					\begin{array}{r|l||c|c|c|}
						\multicolumn{5}{c}{ } \\	
						\multicolumn{5}{c}{ } \\	
						\multicolumn{5}{c}{ } \\	
						\multicolumn{5}{c}{ } \\	
						\multicolumn{5}{c}{ } \\		
						\cline{2-5}	
						&
						&  1
						&	x
						&      x^{2}
						\\ \hline \hline 	
						&    1_{\overline{G}}
						&	1
						&      1
						&      1
						\\  \cline{2-5}	
						T_{3,3}	
						&  R_{+}'(\theta_{0})
						&  1
						&   \omega
						&   \omega^{2}
						\\  \cline{2-5}		
						&  R_{-}'(\theta_{0})
						& 1
						&   \omega^{2}
						&  \omega
						\\	 
						\hline\hline
						\multicolumn{5}{c}{\omega:=\mbox{\textup{primitive 3rd root of unity}}} 		  
					\end{array}
					\]
				\end{table}
			}	 
			
			\item if $q\equiv -3\pmod{8}$, then the matrices $T_{i,v}$ with $1\leq v\leq i\leq 3$ are as given in Table~\ref{tab:Tiapsl2q=-3mod8}.
			{\footnotesize
				\renewcommand*\arraystretch{1.5}
				\begin{table}[!htbp]%htpb
					\caption{$T_{i,v}$  with $i\geq v$ when $q\equiv -3\pmod{8}$}\vspace{-5mm}
					\label{tab:Tiapsl2q=-3mod8}
					\[
					\begin{array}{r|c||c|c|c|c|}
						\cline{2-6}	
						&
						&   I_{2}Z
						&	\begin{tabular}{c}
							${\bf d}(a)Z $\\
							\scriptsize{$a\in\Gamma_{2'}$}
						\end{tabular}
						&     \begin{tabular}{c}
							${\bf d}'(\xi)Z$\\
							\scriptsize{$\xi\in\Gamma'_{2'}$}
						\end{tabular}
						&       \begin{tabular}{c}
							$u_{\tau}Z$\\
							\scriptsize{$\tau\in\{\pm\}$}
						\end{tabular}
						\\ \hline \hline 	
						&     1_{\overline{G}}+\St + R_+(\alpha_0)+R_-(\alpha_0)
						&	2(q+1)
						&      4
						&      0
						&      2
						\\  \cline{2-6}		
						&      \St + R_+(\alpha_0)
						&	\frac{3q+1}{2}
						&     2
						&     -1
						&       \frac{1 +  \tau \sqrt {q}}{2}
						\\  \cline{2-6}	    
						&  \St+R_-(\alpha_0)
						&	\frac{3q+1}{2}
						&    2
						&    -1
						&      \frac{1 -  \tau \sqrt {q}}{2}
						\\  \cline{2-6}	
						T_{1,1}
						&    \begin{tabular}{c}
							$R(\alpha)+R(\alpha\eta^{2})$\\
							\scriptsize{$(\alpha \in [T_{2'}^\wedge / \equiv] \setminus \{1\}$, $S_{2}^{\wedge}=\langle \eta \rangle)$ } \\
						\end{tabular}
						&	2(q+1)
						&   	2(\alpha(a)+\alpha(a)^{-1})
						&    0
						&      2
						\\  \cline{2-6}	
						&   \begin{tabular}{c}
							$R'(\theta)$\\
							\scriptsize{$(\theta \in [T_{2'}'^\wedge / \equiv] \setminus \{1\})$} \\
						\end{tabular}
						&	q-1
						&     0
						&           -\theta(\xi)-\theta(\xi)^{-1}
						&           -1
						\\ \hline \hline 	
						& 1_{\overline{G}}+\St
						&	q+1
						&      2
						&      0
						&      1 
						\\ \cline{2-6}	
						\begin{tabular}{c}
							$T_{2,1}\!\!\!$\\
							\scriptsize{\phantom{X}} \\
						\end{tabular}
						&     \begin{tabular}{c}
							$R(\alpha)$\\
							\scriptsize{$(\alpha \in [T_{2'}^\wedge / \equiv] \setminus \{1\})$}\\
						\end{tabular}
						&	q+1
						&      \alpha(a)+\alpha(a)^{-1}
						&    0
						&     1
						\\ \hline \hline 	 
						&   1_{\overline{G}}
						&	1
						&      1
						&      1
						&      1
						\\ \cline{2-6}	
						T_{3,1}
						&  \St
						&  q
						&   1
						&   -1
						&   0
						\\  \cline{2-6}	
						& \St
						&  q
						&    1
						&   -1
						&  0
						\\
						\hline\hline			  
					\end{array}
					\]
					%\end{table}
					%}
					%
					%
					
					%{\footnotesize
					%\renewcommand*\arraystretch{1.5}
					%\begin{table}[!htpb]%
					%\label{tab:T22T23psl2q=-3mod8}
					\[
					\begin{array}{r|c||c|c|}
						\cline{2-4}	
						&    
						& 1
						&     \begin{tabular}{c}
							$\overline{{\bf d}(a)Z}$ \\
							\scriptsize{$a\in \Gamma(T_{2'})$}
						\end{tabular}   
						\\ \hline \hline 	
						& 1_{\overline{G}}+\St
						&	2   
						&       2   
						\\ \cline{2-4}	
						\begin{tabular}{c}
							$T_{2,2}\!\!\!$\\
							\scriptsize{\phantom{X}} \\
						\end{tabular}
						&     \begin{tabular}{c}
							$R(\alpha)$\\
							\scriptsize{$(\alpha \in [T_{2'}^\wedge / \equiv] \setminus \{1\})$}\\
						\end{tabular}
						&	2
						&        \alpha(a)+\alpha(a)^{-1}
						\\ \hline \hline 	 
						&   1_{\overline{G}}
						&	1
						&      1
						\\ \cline{2-4}	
						T_{3,2}
						&  \St
						&  1
						&   1
						\\  \cline{2-4}	
						& \St
						&  1
						&  1
						\\
						\hline\hline			
					\end{array}
					\qquad\qquad
					\begin{array}{r|l||c|c|c|}
						\multicolumn{5}{c}{ } 	\\
						\multicolumn{5}{c}{ } 	\\
						\multicolumn{5}{c}{ } 	\\
						\multicolumn{5}{c}{ } 	\\
						\cline{2-5}	
						&
						&  1
						&	x
						&      x^{2}
						\\ \hline \hline 	
						&    1_{\overline{G}}
						&	1
						&      1
						&      1
						\\  \cline{2-5}	
						T_{3,3}	
						&  \St
						&  1
						&   \omega
						&   \omega^{2}
						\\  \cline{2-5}		
						&  \St
						& 1
						&   \omega^{2}
						&  \omega
						\\	 
						\hline\hline	
						\multicolumn{5}{c}{\omega:=\mbox{\textup{primitive 3rd root of unity}}} 		   
					\end{array}
					\]
				\end{table}
			}

		\end{enumerate}
	\end{thm}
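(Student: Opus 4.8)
The plan is to read off the three column-blocks $T_{i,1}$, $T_{i,i}$ and (for the genuinely new case) $T_{3,2}$ from the classification of trivial source modules already assembled, following the template of Theorems~\ref{thm:l|q-1} and~\ref{thm:l|q+1}. First I would fix the rows of $\Triv_2(\overline G)$: by Proposition~\ref{prop:omnts}(d) the number of trivial source modules with vertex $Q_i$ equals the number of $2'$-classes of $\overline N_i$, so a complete irredundant list is obtained by sorting the modules according to vertex. The modules with vertex $Q_3$, together with the Scott module with vertex $Q_2$, are those furnished by Lemma~\ref{lem:B0A4} (for $q\equiv 3$) or Lemma~\ref{lem:B0A5} (for $q\equiv -3$); the remaining modules with vertex $Q_2\cong C_2$ are the simple modules of the defect-$1$ (nilpotent) blocks, read off via Remark~\ref{rem:cyclicDefectC2C4}; and the modules with vertex $\{1\}$ are all the PIMs, whose ordinary characters come from the decomposition matrices and Brauer trees in Tables~\ref{tab:blocksPSL2q3mod8} and~\ref{tab:blocksPSL2q5mod8}. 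Part~(a) is then immediate: since $Q_1\le Q_2\le Q_3$ with $Q_2\not\le_{\overline G}Q_1$, $Q_3\not\le_{\overline G}Q_1$ and $Q_3\not\le_{\overline G}Q_2$, Remark~\ref{rem:tsctbl}(c) forces $T_{1,2}=T_{1,3}=T_{2,3}=\mathbf 0$.

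For the first column-block, Remark~\ref{rem:tsctbl}(d) gives $\tau^{\overline G}_{\{1\},s}([M])=\chi_{\widehat M}(s)$, so every $T_{i,1}$ is computed by evaluating the characters $\chi_{\widehat M}$ listed above at the chosen representatives of $[\overline G]_{2'}$, using the character table of $\SL_2(q)$ (Table~\ref{tab:CTBLsl2}) together with the convention that the $\IC$-characters of $\overline G$ are those of $\SL_2(q)$ trivial on $Z$. The diagonal blocks $T_{i,i}$ are handled by Remark~\ref{rem:tsctbl}(a) and Proposition~\ref{prop:omnts}(d): here $\tau^{\overline G}_{Q_i,s}([M])=\varphi_{M[Q_i]}(s)=\chi_{\widehat{f(M)}}(s)$, where $f(M)=M[Q_i]$ is the projective indecomposable $k\overline N_i$-module Green-corresponding to $M$. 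For $i=3$ one has $\overline N_3\cong C_3$, whose three PIMs are the linear modules $k,k_\omega,k_{\omega^2}$, producing the character table of $C_3$ recorded in $T_{3,3}$; for $i=2$ one has $\overline N_2\cong\mathcal D_{(q\pm1)/2}$, and I would read off the projective characters of this dihedral group at its $2'$-classes; and $T_{1,1}$ records the PIM characters of $\overline G$ itself.

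The genuinely new point, and the main obstacle, is the off-diagonal block $T_{3,2}$, which has no analogue in the cyclic case (Lemma~\ref{lem:equalgreencorrs} does not apply, as $Q_3$ is not cyclic). Here I must evaluate $\tau^{\overline G}_{Q_2,s}([M])=\varphi_{M[Q_2]}(s)$ for each of the three modules $M$ with vertex $Q_3$ and each $s\in[\overline N_2]_{2'}$. The key observation is that $M[Q_2]$ is a one-dimensional trivial source $k\overline N_2$-module: by Remark~\ref{rem:tsctbl}(e) and Lemma~\ref{lem:tscharacters}, $\dim_k M[Q_2]=\tau^{\overline G}_{Q_2,1}([M])=\chi_{\widehat M}(x)$ for $x$ a generator of $Q_2$, and reading this value off Table~\ref{tab:CTBLsl2} (at the image of an order-$4$ element of $T$ or $T'$) gives $\chi_{\widehat M}(x)=1$ in every case. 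Now $\overline N_2\cong\mathcal D_{2m'}$ with $m'=(q\pm1)/4$ odd, so its abelianization is $C_2$; since $k$ has characteristic $2$, the only one-dimensional $k\overline N_2$-module is the trivial one, whence $M[Q_2]\cong k$ and $\tau^{\overline G}_{Q_2,s}([M])=\varphi_k(s)=1$ for all $s$. This yields the all-ones block $T_{3,2}$.

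Finally, the two congruence cases $q\equiv 3$ and $q\equiv -3\pmod 8$ are treated in exactly the same way, the only differences being the names of the relevant simple modules and characters (supplied by Lemmas~\ref{lem:B0A4} and~\ref{lem:B0A5} and the block tables) and whether the split or the non-split torus governs $Q_2$ and $\overline N_2$. The one point requiring care throughout is the matching of rows to columns in the diagonal blocks, i.e. identifying which projective $k\overline N_i$-module is the Green correspondent of each listed $M$; this is pinned down by the character-degree and Brauer-tree data already recorded, together with the fact that Scott modules correspond to Scott modules under the Green and Brauer correspondences.
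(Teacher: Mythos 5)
Your proposal is correct and follows the same overall skeleton as the paper's proof: part (a) via Remark~\ref{rem:tsctbl}(c); the blocks $T_{i,1}$ by evaluating the characters supplied by Lemmas~\ref{lem:B0A4} and~\ref{lem:B0A5}, Remark~\ref{rem:cyclicDefectC2C4} and the block tables at $[\overline{G}]_{2'}$ using Table~\ref{tab:CTBLsl2}; and $T_{3,3}$ as the character table of $C_{3}$. Where you genuinely diverge from the paper is in the two delicate blocks. For $T_{3,2}$, after computing $\dim_{k}M[Q_{2}]=\tau_{Q_{2},1}^{\overline{G}}([M])=\chi^{}_{\widehat{M}}(x)=1$ exactly as the paper does, you conclude $M[Q_{2}]\cong k$ because $\overline{N}_{2}$ is dihedral of order twice an odd number, so its abelianization is $C_{2}$ and in characteristic $2$ the only one-dimensional $k\overline{N}_{2}$-module is the trivial one; the paper instead argues that $k$ is the unique trivial source $k\overline{N}_{2}$-module with vertex $C_{2}$ (the order-$2$ subgroups being conjugate and self-normalising, via Proposition~\ref{prop:omnts}(d)). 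Both arguments are valid and yours is slightly more elementary. For $T_{2,2}$ you propose to work directly with the PIMs of $\overline{N}_{2}\cong\mathcal{D}_{\frac{q+1}{2}}$ (resp.\ $\mathcal{D}_{\frac{q-1}{2}}$), matching the Scott row via $\Sc(\overline{G},Q_{2})\mapsto\Sc(N_{2},Q_{2})$ and $\Sc(N_{2},Q_{2})[Q_{2}]\cong P_{k}$, which is legitimate since $Q_{2}\trianglelefteq N_{2}$ forces $\TS(N_{2};Q_{2})$ to consist of inflations of PIMs of $\overline{N}_{2}$; this buys you a shortcut past the paper's heavier treatment, which handles the Scott row through the source-algebra equivalence $\bB_{0}(\mathcal{D}_{4w})\sim_{SA}k[C_{2}\times C_{2}]$ of \cite{CEKL} and the linear characters of $\mathcal{D}_{4w}$. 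The one place you remain schematic is the matching of the nilpotent-block rows of $T_{2,2}$: to obtain the precise entries $\theta(\xi)+\theta(\xi)^{-1}$ (resp.\ $\alpha(a)+\alpha(a)^{-1}$) one must identify the Brauer quotient as the unique simple module of the Brauer correspondent block and compute its character as an induced character, which the paper takes from \cite[Theorem~7.1.2, \S6.2.2 and \S6.2.3]{BonBook}; your appeal to ``character-degree and Brauer-tree data'' together with the block correspondence gestures at exactly this bookkeeping, and since the step is routine I regard it as a presentational rather than a mathematical gap.
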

	
	\clearpage

	\begin{proof}
		To begin with, the fact that $T_{1,2}=T_{1,3}=T_{2,3}=\mathbf{0}$ is immediate from \smallskip Remark~\ref{rem:tsctbl}(c). 
		So we may assume that $1\leq v\leq i\leq 3$, in which cases we will split our analysis of the trivial source modules with vertex $Q_{i}$ $2$-block by $2$-block according to the block distribution in Table~\ref{tab:blocksPSL2q3mod8} and Table~\ref{tab:blocksPSL2q5mod8}.

		\begin{enumerate}[$\cdot$]
			
			\item \textbf{The matrix $T_{1,1}$}\,.
			By Remark~\ref{rem:tsctbl}(a) the matrix  $T_{1,1}$ consists of  the values of the ordinary characters of the PIMs of $k\overline{G}$ evaluated at the $2'$-conjugagy classes of $\overline{G}$.
			Now, the character of the three PIMs of $\bB_{0}(\overline{G})$ are given in Table~\ref{tab:psl23} and Table~\ref{tab:psl25}, namely 
			$$
			\begin{cases}
				1_{\overline{G}}+\St,\,\St+R_{+}'(\theta_{0}),\,\St+R_{-}'(\theta_{0} ) & \text{ if } q\equiv 3\pmod{8}\,, \\
				1_{\overline{G}}+\St+R_+(\alpha_0)+R_-(\alpha_0),\,\St+R_+(\alpha_0),\,\St+R_-(\alpha_0)   & \text{ if } q\equiv -3\pmod{8}.
			\end{cases}
			$$
			Next, when $q\equiv 3\pmod{8}$, the PIMs  in the blocks $A'_\theta$  $(\theta \in [T_{2'}'^\wedge / \equiv] \setminus \{1\})$ with a  defect group isomorphic to $C_{2}$  afford the characters  $R'(\theta)+R'(\theta\eta^{2})$   by Remark~\ref{rem:cyclicDefectC2C4}, whereas the PIMs in the blocks $A_{\alpha}$ $(\alpha\in [T_{2'}^\wedge / \equiv] \setminus \{1\})$ of defect zero afford the characters~$R(\alpha)$.\\ 
			When $q\equiv -3\pmod{8}$, the PIMs  in the blocks $A_{\alpha}$ $(\alpha\in [T_{2'}^\wedge / \equiv] \setminus \{1\})$ with a  defect group isomorphic to $C_{2}$  afford the characters $R(\alpha)+R(\alpha\eta^{2})$ 
			whereas the PIMs in the blocks $A'_\theta$  $(\theta \in [T_{2'}'^\wedge / \equiv] \setminus \{1\})$ of defect zero afford the characters $R'(\theta)$.\\
			In all cases the values at the $2'$-classes are read directly from the character table of~$\SL_{2}(q)$  \smallskip(Table~\ref{tab:CTBLsl2}). 
			\item  \textbf{The matrix $T_{2,1}$}\,.
			By Remark~\ref{rem:tsctbl}(d), the matrix $T_{2,1}$ consists of  the values of the ordinary characters of the trivial source $k\overline{G}$-modules with vertex $Q_{2}\cong C_{2}$ evaluated at the $2'$-conjugagy classes of~$\overline{G}$. Clearly, the blocks of defect zero cannot contain any module with vertex $C_{2}$. By Lemma~\ref{lem:B0A4} and Lemma~\ref{lem:B0A5} the principal block contains precisely one such module, namely  $\Sc(\overline{G},C_{2})$ affording the character
			$$
			\begin{cases}
				1_{\overline{G}}+\St+R'_+(\theta_0)+R'_-(\theta_0)    & \text{ if } q\equiv 3\pmod{8}\,, \\
				1_{\overline{G}}+\St  & \text{ if } q\equiv -3\pmod{8}.
			\end{cases}
			$$
			By Remark~\ref{rem:cyclicDefectC2C4}, Table~\ref{tab:blocksPSL2q3mod8} and Table~\ref{tab:blocksPSL2q5mod8}, the blocks with a defect group isomorphic to $C_{2}$ all contain precisely one trivial source $k\overline{G}$-module affording the characters
			$$
			\begin{cases}
				R'(\theta\eta^{2})\quad(\theta \in [T_{2'}'^\wedge / \equiv] \setminus \{1\},\, S_{2}^{\wedge}=\langle \eta \rangle)  & \text{ if } q\equiv 3\pmod{8}\,, \\
				R(\alpha)\quad(\alpha\in [T_{2'}^\wedge / \equiv] \setminus \{1\}) & \text{ if } q\equiv -3\pmod{8}.
			\end{cases}
			$$
			Again, in all cases the values at the $2'$-classes are read directly from the character table \smallskip of~$\SL_{2}(q)$. 
			%%%
			\item  \textbf{The matrix $T_{3,1}$}\,.
			By Remark~\ref{rem:tsctbl}(d), the matrix $T_{3,1}$ consists of  the values of the ordinary characters of the trivial source $k\overline{G}$-modules with vertex $Q_{3}\cong C_{2}\times C_{2}$ evaluated at the $2'$-conjugagy classes of~$\overline{G}$. As $\bB_{0}(\overline{G})$ is the unique block with full defect, the characters of these modules are given in Table~\ref{tab:psl23} and Table~\ref{tab:psl25}, namely
			$$
			\begin{cases}
				1_{\overline{G}},\,R_{+}'(\theta_{0}),\,R_{-}'(\theta_{0})  & \text{ if } q\equiv 3\pmod{8}\,, \\
				1_{\overline{G}},\,\St,\,\St  & \text{ if } q\equiv -3\pmod{8}.
			\end{cases}
			$$
			Again, the values of these characters at the $2'$-classes are read directly from the character table \smallskip of~$\SL_{2}(q)$. 
			\item  \textbf{The matrix $T_{2,2}$}\,.
			By Convention~\ref{conv:tsctbl}(a), the matrix $T_{2,2}$ consists of the values of the species~$\tau_{Q_{2},s}^{\overline{G}}$, with $s$ running through  $[\overline{N}_{2}]_{2'}$, evaluated at the  trivial source modules $[M]\in\TS(\overline{G};Q_{2})$.  
			By Remark~\ref{rem:tsctbl}(f), $s=1$ yields
			$$\tau_{Q_{2},1}^{\overline{G}}([M])=\tau_{\{1\},1}^{Q_{2}/Q_{2}}\circ \Br_{Q_{2}}^{Q_{2}}\circ \Res^{\overline{G}}_{Q_{2}}([M])\,.$$
			Now, by Remark~\ref{rem:tsctbl}(d), $\tau_{\{1\},1}^{Q_{2}/Q_{2}}$ returns the $k$-dimension of $\Br_{Q_{2}}^{Q_{2}}\circ \Res^{\overline{G}}_{Q_{2}}(M)$, which is easily computed as follows. Because  $Q_{2}\cong C_{2}$ the indecomposable direct summands of $\Res^{\overline{G}}_{Q_{2}}(M)$ are either trivial or projective and it follows that  $\Br_{Q_{2}}^{Q_{2}}$ returns only the trivial summands of the latter module. By Lemma~\ref{lem:tscharacters}(a) the multiplicity of the trivial module as a direct summand of $\Res^{\overline{G}}_{Q_{2}}(M)$ is given by $\chi^{}_{\widehat{M}}(z)\,$ where $z$ is the generator of $Q_{2}$.  Therefore, since the modules $[M]\in\TS(\overline{G};Q_{2})$ afford the characters
			$$
			\begin{cases}
				1_{\overline{G}}+\St+R_+'(\theta_0)+R_-'(\theta_0),\,\,  R'(\theta\eta^{2}) &     \text{ if } q\equiv 3\pmod{8}\,,\\
				\smallskip \scriptsize{(\theta \in [T_{2'}'^\wedge / \equiv] \setminus \{1\},  S_{2}'^{\wedge}=\langle \eta \rangle)} & \\
				1_{\overline{G}}+\St,\,\, R(\alpha)\,\, \scriptsize{(\alpha \in [T_{2'}^\wedge / \equiv] \setminus \{1\})}   & \text{ if } q\equiv -3\pmod{8} 
			\end{cases}
			$$
			and $z$ is an element of type ${\bf d}'(\xi)Z\in T'/Z$ with $\xi$ of order~4 when $q\equiv 3\pmod{8}$, respectively an element of type ${\bf d}(a)Z\in T/Z$ with $a$ of order~4 when $q\equiv -3\pmod{8}$, 
			we read from the character table of $\SL_{2}(q)$  that 
			$$\chi^{}_{\widehat{M}}(z)=2$$
			in all cases. \par
			Next, we prove that if $M=\Sc(\overline{G},C_{2})=\Sc(\overline{G},Q_{2})$,  then  
			$$\tau_{Q_{2},s}^{\overline{G}}([M])=2\text{ for each }1\neq s\in [\overline{N}_{2}]_{2'}\,.$$
			By definition $\tau_{Q_{2},s}^{\overline{G}}([M])$ is given by  the Brauer character $\varphi^{}_{M[Q_{2}]}$ of $M[Q_{2}]$ evaluated at $s$. Moreover, by Remark~\ref{prop:omnts}(d), $M[Q_{2}]$ seen as a $kN_2$-module  is the $kN_2$-Green correspondent of $M$, which is again the Scott module with vertex $Q_{2}$, that is,
			$$M[Q_{2}]=\Sc(N_2,Q_{2})$$
			(see \cite[\S2]{Bro85}). 
			Thus it suffices to prove that the ordinary character $\chi^{}_{\widehat{M[Q_{2}]}}$ takes value $2$ at all the $2'$-conjugacy classes of $N_2$.
			Now, by Lemma~\ref{lem:omnibusl=2}(b),(c) for both congruences $q\equiv\pm 3\pmod{8}$, the normaliser $N_2=:\mathcal{D}_{4w}$ is a dihedral group of order $4w$ with $w$ odd.
			Clearly, Scott modules belong to the principal block because they have a trivial composition factor by definition, and $\bB_{0}(\mathcal{D}_{4w}) \sim_{SA} k[C_{2}\times C_{2}]$ by the main result of \cite{CEKL} as $\mathcal{D}_{4w}$ is $2$-soluble. Over $k[C_{2}\times C_{2}]$, if $R_{2}\leq C_{2}\times C_{2}$ of order~$2$, then  it is straightforward to compute  that
			$$U:=\Sc(C_{2}\times C_{2},R_{2})=k\indhg{R_{2}}{C_{2}\times C_{2}}$$ 
			and affords the ordinary character $\chi^{}_{\widehat{U}}=1_{C_{2}\times C_{2}}+1_{b}$ where $1_{b}\in\Irr(C_{2}\times C_{2})\setminus\{1_{C_{2}\times C_{2}}\}$.
			It follows then directly from the character table of $\mathcal{D}_{4w}$ (see e.g. \cite[\S18.3]{JamesLiebeck}) and  the above source algebra equivalence that $\Irr(\bB_{0}(\mathcal{D}_{4w}))=\Lin(\bB_{0}(\mathcal{D}_{4w}))$ and 
			$\chi^{}_{\widehat{M[Q_{2}]}}$ is the sum of two linear characters. The claim now follows from the fact that all the  linear characters of $\mathcal{D}_{4w}$ take value $1$ at all $2'$-conjugacy classes (see  \cite[\S18.3]{JamesLiebeck}). \par
			Finally, assume that  $[M]\in\TS(\overline{G};Q_{2})\setminus\{\Sc(G,C_{2})\}$ and  $s\neq 1$. Then $M$ belongs to a block~$\bB$ of the form $A'_{\theta}$ ($\theta\in [T_{2'}'^\wedge / \equiv] \setminus \{1\}$) if $q\equiv 3\pmod{8}$, resp. of the form $A_{\alpha}$ ($\alpha \in [T_{2'}^\wedge / \equiv] \setminus \{1\}$) if $q\equiv -3\pmod{8}$, with cyclic defect group $Q_{2}\cong C_{2}$. By definition of the species $\tau_{Q_{2},s}^{\overline{G}}$ and Remark~\ref{prop:omnts}(d) we have
			$$\tau_{Q_{2},s}^{\overline{G}}(M)=\chi^{}_{\widehat{M[Q_{2}]}}(\tilde{s})$$
			where $\tilde{s}$ is a pre-image of $s$ in $N_{2}$ and $M[Q_{2}]$ is seen as the  $kN_{2}$-Green correspondent of $M$. Now, the latter module must lie in the Brauer correspondent of~$\bB$ at the level of $N_{2}$ and the theory of blocks with cyclic defect groups yields that $M[Q_{2}]$ is the unique simple $kN_{2}$-module. Then, by \cite[Theorem~7.1.2, \S6.2.2. and \S6.2.3]{BonBook}  we have
			$$
			\begin{cases}
				\chi^{}_{\widehat{M[Q_{2}]}}=\Ind_{T'}^{N'}(\theta)&     \text{ if } q\equiv 3\pmod{8}\,,\\
				\chi^{}_{\widehat{M[Q_{2}]}}=\Ind_{T}^{N}(\alpha) & \text{ if } q\equiv -3\pmod{8}, 
			\end{cases}
			$$
			seen as characters of $\overline{G}$. 
			Thus, it follows from the character tables of $N'$, resp. $N$, (see \cite[Table~6.2 and Table~6.3]{BonBook}) that for $s= {\bf d}'(\xi)$ with $\xi\in \Gamma'_{2'}$, resp. $s={\bf d}(a)$ with $a\in \Gamma_{2'}$,%  we have 
			$$
			\begin{cases}
				\tau_{Q_{2},s}^{\overline{G}}(M) = \theta(\xi)+\theta(\xi)^{-1}&     \text{ if } q\equiv 3\pmod{8}\,,\\
				\tau_{Q_{2},s}^{\overline{G}}(M)=\alpha(a)+\alpha(a)^{-1} & \text{ if } q\equiv -3\pmod{8},
			\end{cases}
			$$
			as \smallskip required.

			\smallskip

			\item  \textbf{The matrix $T_{3,2}$}\,.
			By Convention~\ref{conv:tsctbl}(a), the matrix $T_{3,2}$ consists of the values of the species~$\tau_{Q_{2},s}^{\overline{G}}$, with $s$ running through  $[\overline{N}_{2}]_{2'}$, evaluated at the three trivial source modules $[M]\in\TS(\overline{G};Q_{3})$.   
			As in the previous case, if $s=1$, then
			$$\tau_{Q_{2},1}^{\overline{G}}([M])= \dim_{k}\left( \Br_{Q_{2}}^{Q_{2}}\circ \Res^{\overline{G}}_{Q_{2}}(M) \right)=\chi^{}_{\widehat{M}}(z)$$
			where $z$ is the generator of $Q_{2}$, which is  of type ${\bf d}'(\xi)Z\in T'/Z$ with $\xi$ of order~4 when $q\equiv 3\pmod{8}$, respectively an element of type ${\bf d}(a)Z\in T/Z$ with $a$ of order~4 when $q\equiv -3\pmod{8}$. Since the three modules $[M]\in\TS(\overline{G};Q_{3})$ afford the characters
			$$
			\begin{cases}
				1_{\overline{G}},\,R_{+}'(\theta_{0}),\,R_{-}'(\theta_{0})  & \text{ if } q\equiv 3\pmod{8}\,, \\
				1_{\overline{G}},\,\St,\,\St  & \text{ if } q\equiv -3\pmod{8},
			\end{cases}
			$$ we read from the character table of $\SL_{2}(q)$ that 
			$$\chi^{}_{\widehat{M}}(z)=1$$
			in all cases, as required. \\
			Next, we claim that 
			$$\tau_{Q_{2},s}^{\overline{G}}([M])=1 \quad \forall\, [M]\in\TS(\overline{G};Q_{3}),\forall\, s\in [\overline{N}_{2}]_{2'}\,.$$
			First, notice that by Remark~\ref{rem:tsctbl}(e), the above argument yields
			$$\dim_{k}M[Q_{2}]=\tau_{Q_{2},1}^{\overline{G}}([M])=1\quad\forall\, [M]\in\TS(\overline{G};Q_{3})\,.$$
			Now,  $\overline{N}_{2}$ has a unique trivial source module with vertex $C_{2}$, namely the trivial module. Indeed, this follows from Proposition~\ref{prop:omnts}(d) as $\overline{N}_{2}\cong \mathcal{D}_{\frac{q+1}{2}}$ when $q\equiv 3\pmod{8}$, respectively $\overline{N}_{2}\cong \mathcal{D}_{\frac{q-1}{2}}$ when $q\equiv -3\pmod{8}$, so that in both cases the subgroups of order~$2$ are conjugate and  self-normalising. Therefore, we conclude that $M[Q_{2}]=k$ for every $[M]\in\TS(\overline{G};Q_{3})$. In all cases, by definition $\tau_{Q_{2},s}^{\overline{G}}([M])$ is equal to the Brauer character of the trivial $k\overline{N}_{2}$-module evaluated at $s$, hence equal to $1$, %by Remark~\ref{rem:tsctbl}(c)
			proving the \medskip claim.
			\item  \textbf{The matrix $T_{3,3}$}\,.
			Because $\overline{N}_{3}\cong C_{3}$, by Remark~\ref{rem:tsctbl}(a) the matrix $T_{3,3}$ of $\Triv_{2}(\overline{G})$ is just  the ordinary character table of the cyclic group $C_{3}$. 
		\end{enumerate}
	\end{proof}

	%%%--------------------------------------------------------------------------------------------------------
	%%%-----------------------                       Section                    ------------------------------
	%%%-------------------------------------------------------------------------------------------------------
	
%	\newpage
	
	\section{$\SL_2(q)$ with $q\equiv\pm3\pmod{8}$ at $\ell=2$}\label{SL2l=2}

	Throughout this section, we continue to assume  that  $\ell=2$ and we now consider the group $G=\SL_2(q)$ with $q\equiv\pm3\pmod{8}$.
	By Lemma~\ref{lem:omnibusl=2}(b) the Sylow $2$-subgroups of $G$ are quaternion groups~$\mathcal{Q}_{8}$.  The $2$-blocks of $G$ have defect groups isomorphic to  $\mathcal{Q}_{8}$, $C_{4}$ or $C_{2}$, and  $\bB_{0}(G)$ is the unique block with full defect. 
	%Trivial source modules in blocks of defect~$1$ and $2$ are easily dealt with Lemma~\ref{lem:cyclicDefectC2C4}. 
	%Therefore, again in this case, it essentially remains to describe the trivial source modules lying in the principal block. 
	As all non-trivial $2$-subgroups contain the centre $Z$ of $G$ and inflation of a trivial source module is again a trivial source module, we may reduce a large amount of our computations to the  results obtained for $\overline{G}=\PSL_{2}(q)$ in the previous section and it essentially remains to list the PIMs of $G$.

	\begin{nota}\label{nota:sl2ql=2}%
		In this section we adopt the following notation.  We let $R_{4}$ be the Sylow $2$-subgroup of $G$ defined in Lemma~\ref{lem:omnibusl=2}, that is,  
		$$R_{4}=
		\begin{cases}
			\langle S_2', \sigma' \rangle& \text{ if }q\equiv 3\pmod{8}, \\
			\langle S_2, \sigma \rangle &\text{ if }q\equiv -3\pmod{8}.
		\end{cases}
		$$
		All subgroups of order $4$ in~$G$ are conjugate, thus we may  fix the following  set of representatives for the conjugacy classes of $2$-subgroups of $G$:
		$$
		R_{4}\cong \mathcal{Q}_{8},\quad
		R_{3}:=
		\begin{cases}
			S_{2}'\cong C_{4}& \text{ if }q\equiv 3\pmod{8}\\
			S_{2}\cong C_{4} &\text{ if }q\equiv -3\pmod{8}
		\end{cases}, 
		\quad R_2:=Z\cong C_{2} \quad\text{ and } \quad R_1:=\{1\}\,.
		$$
		%%%We set $\Gamma(T_{2'}):=[T_{2'}\!\setminus\!\{\pm I_{2}\}/\equiv]$,  $\Gamma(T'_{2'}):=[T'_{2'}\!\setminus\!\{\pm I_{2}\}/\equiv]$. 
		Then, the structure of the quotients $\overline{N}_{i}=N_{\overline{G}}(R_{i})/R_{i}$ $(1\leq i\leq 4)$ follows from Lemma~\ref{lem:omnibusl=2} and we choose the following sets of representatives of their $2'$-conjugacy classes:
		\begin{enumerate}[{\,\,\rm(i)}] \setlength{\itemsep}{2pt}
			\item  $\overline{N}_{1}=G$ and we set 
			$$[\overline{N}_{1}]_{2'}:=\{I_{2}\}\cup\{{\bf d}(a)\mid a\in\Gamma_{2'}\}\cup\{{\bf d}(\xi)\mid \xi\in\Gamma'_{2'}\}\cup\{ u_{\tau}\mid\tau\in\{\pm1\}\}\,;$$
			\item  $\overline{N}_{2}=G/Z=\overline{G}$ and we set 
			$$[\overline{N}_{2}]_{2'}:=\{I_{2}Z\}\cup\{{\bf d}(a)Z\mid a\in \Gamma_{2'}\}\cup\{{\bf d}(\xi)Z\mid \xi\in \Gamma'_{2'}\}\cup\{u_{\tau}Z\mid\tau\in\{\pm1\}\}\,;$$
			\item 
			$$\overline{N}_{3}=
			\begin{cases}
				N'/R_{3} & \text{ if }q\equiv 3\pmod{8},  \\
				N/R_{3} & \text{ if }q\equiv -3\pmod{8},
			\end{cases}
			$$
			and  we set  
			$$[\overline{N}_{3}]_{2'}:=
			\begin{cases}
				\{1\}\cup\{\overline{{\bf d}(\xi)}\in \overline{N}_{3} \mid \xi\in \Gamma'_{2'}\}      & \text{ if }q\equiv 3\pmod{8}, \\
				\{1\}\cup\{\overline{{\bf d}(a)} \in \overline{N}_{3}\mid a\in \Gamma_{2'}\}     & \text{ if }q\equiv -3\pmod{8},
			\end{cases}
			$$
			where $1$ denotes the trivial element of $\overline{N}_{3}$ and  the bar notation left cosets in the quotient~$\overline{N}_{3}$;  \smallskip
			\item  $\overline{N}_{4}\cong C_{3}=:\langle x\rangle$ and we set \smallskip $[\overline{N}_{4}]_{2'}=:\{1,x,x^{2}\}$.
		\end{enumerate}
		With this notation, $\Triv_{2}(G)=[T_{i,v}]_{1\leq i,v\leq 4}$ with $T_{i,v}=\big[ \tau_{R_{v},s}^{G}([M])\big]_{M \in \TS(G;R_{i}), s\in [\overline{N}_{v}]_{2'} }$\,.
	\end{nota}
	\bigskip

	%%%%%%%%%%%%%%%%%%%%%%%%%%%%%%%%%%%%%%%%%%%%%%%%%%%%%%%%% 
	% The blocks of $\SL_2(q)$ when $\ell = 2$, $q \equiv 5$ mod $8$
	%%%%%%%%%%%%%%%%%%%%%%%%%%%%%%%%%%%%%%%%%%%%%%%%%%%%%%%%%
	\vspace{2mm}
	\subsection{The $2$-blocks}
	
	\begin{lem} {\label{lem:blocksqcong5}}
		When $q \equiv \pm3\pmod{8}$ the $2$-blocks of $G$, their defect groups, and their decomposition matrices or Brauer trees are as given in Table~\ref{tab:blocksqcong3} and Table~\ref{tab:blocksqcong5}.
	\end{lem}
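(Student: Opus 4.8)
The plan is to extract the block distribution, the number of blocks, and the defect groups from Bonnafé's book, and then to determine the two finer invariants displayed in the tables --- the decomposition matrix of the principal block and the Brauer trees of the cyclic blocks --- by direct computation with the character table of $G$ (Table~\ref{tab:CTBLsl2}). The coarse data is recorded in \cite[Chapters~8 and~9]{BonBook}: for both congruences the $2$-blocks of $G=\SL_2(q)$ are the principal block $\bB_0(G)$ together with the two families $A_\alpha$ ($\alpha\in[T_{2'}^\wedge/\equiv]\setminus\{1\}$) and $A'_\theta$ ($\theta\in[T_{2'}'^\wedge/\equiv]\setminus\{1\}$). The organising observation is that $Z=Z(G)$ is a central $2$-subgroup and so acts trivially on every simple $kG$-module; hence all simple $kG$-modules are inflated from $\overline G=\PSL_2(q)$, and $\bB_0(G)$ has exactly three of them, the inflations of $k,\overline{S}_{+},\overline{S}_{-}$. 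The defect groups are then forced by comparing the $2$-parts of the character degrees in Table~\ref{tab:CTBLsl2} with $|G|_2=8$ and Lemma~\ref{lem:omnibusl=2}: no degree is divisible by $8$, so there are no blocks of defect zero; the full defect group $R_4\cong\mathcal{Q}_8$ occurs only for $\bB_0(G)$; and the nilpotent blocks $A_\alpha,A'_\theta$ have cyclic defect groups ($S_2$, $S_2'$ or $Z$, depending on the congruence).

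The principal block is the only non-cyclic block, and computing its decomposition matrix is the heart of the argument. Since $\mathcal{Q}_8'=Z$ and $\mathcal{Q}_8/Z\cong C_2\times C_2$, the block has $k(\bB_0(G))=7$ ordinary characters, four of height $0$ and three of height $1$. The height-$0$ characters are exactly the four with $Z$ in their kernel, namely $1_G,\St,R'_\pm(\theta_0)$ when $q\equiv3\pmod8$ and $1_G,\St,R_\pm(\alpha_0)$ when $q\equiv-3\pmod8$; the three height-$1$ characters are faithful. Taking from \cite{BonBook} the three irreducible Brauer characters $\varphi_k,\varphi_+,\varphi_-$ of $\bB_0(G)$ --- which are the reductions of $1_G$ and of $R'_\pm(\theta_0)$ (resp.\ $R_\pm(\alpha_0)$) --- I would then, for each faithful character, solve the linear system expressing its restriction to the $2$-regular classes as a non-negative integer combination of $\varphi_k,\varphi_+,\varphi_-$, reading all values off Table~\ref{tab:CTBLsl2}. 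This determines the three remaining rows uniquely.

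It then remains to deal with the cyclic blocks $A_\alpha$ and $A'_\theta$. Each has cyclic defect group of order $4$ or $2$ and inertial index $e=1$, hence is nilpotent with a single-edge Brauer tree --- carrying exceptional multiplicity $3$ when the defect group is $C_4$ and none when it is $C_2$. The splitting of the characters $R(\alpha\eta)$, resp.\ $R'(\theta\eta)$, into exceptional and non-exceptional parts is governed by $2$-rationality (the $2$-rational character sits at the non-exceptional vertex), and, should the tables also carry the type function, its value on each vertex is computed by evaluating the relevant character at the generator $-I_2$ of the unique order-$2$ subgroup $D_1=Z$ of the defect group, using Table~\ref{tab:CTBLsl2}.

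The step I expect to be most delicate is the third block of rows of the decomposition matrix of $\bB_0(G)$: the faithful characters take values involving $\sqrt{q_0}$ on the unipotent classes $\varepsilon u_\tau$, so the linear system must be handled over $\IZ[\sqrt{q_0}]$ and one must check that the irrational contributions cancel to leave genuine non-negative integer decomposition numbers. A reassuring consistency check is that the resulting matrix coincides with the well-known decomposition matrix of the principal $2$-block of $\SL_2(3)$, resp.\ $\SL_2(5)$, which is the base case for each congruence.
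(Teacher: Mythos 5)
Your proposal is correct in outline but takes a genuinely different---and much longer---route than the paper: the paper's entire proof of this lemma is a one-line citation, since \emph{all} of the data in Table~\ref{tab:blocksqcong3} and Table~\ref{tab:blocksqcong5}, including the decomposition matrices of $\bB_0(G)$ and the Brauer trees of the nilpotent blocks, is already recorded in \cite[Chapters~8 and~9]{BonBook}. (Note that, in contrast to the analogous lemmas for odd $\ell$ and for $\PSL_2(q)$, no type function is asserted here, so nothing needs to be added to the cited source; your hedged remark about type functions is moot.) What your reconstruction buys is independence from that source for the fine invariants, and most of it checks out: the count $k(\bB_0(G))=7$ with four height-$0$ and three height-$1$ characters, the identification of the height-$0$ characters as exactly those with $Z$ in the kernel (via $\theta_0(-1)=(-1)^{(q+1)/2}$ and $\alpha_0(-1)=(-1)^{(q-1)/2}$), the absence of defect-zero blocks because no degree has $2$-part $8$, and the single-edge trees with $e=1$ and exceptional multiplicity $m=3$ (for $C_4$) resp.\ $m=1$ (for $C_2$), with the $2$-rational character at the non-exceptional vertex, all agree with the tables.

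There is, however, one concrete error: your identification of the irreducible Brauer characters when $q\equiv-3\pmod 8$. There $\varphi_\pm$ are \emph{not} the reductions of $R_\pm(\alpha_0)$: the very matrix you are proving has rows $R_+(\alpha_0)\mapsto(1,1,0)$ and $R_-(\alpha_0)\mapsto(1,0,1)$, so $\overline{R_\pm(\alpha_0)}=\varphi_k+\varphi_\pm$ is reducible (for $q=5$ this is the familiar $\fA_5$ pattern $\chi_3=1+\varphi_2$, lifted through $\SL_2(5)$). In \emph{both} congruences the irreducible Brauer characters of $\bB_0(G)$ are the reductions of $1_G$ and of $R'_\pm(\theta_0)$, whose rows are $(0,1,0)$ and $(0,0,1)$ in both tables; this does not contradict the faithfulness of $R'_\pm(\theta_0)$ when $q\equiv-3\pmod 8$, because $-I_2$ is not $2$-regular and Brauer characters only see $2$-regular classes. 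The slip is self-correcting in practice---running your linear system with $\varphi_k,\overline{R_+(\alpha_0)},\overline{R_-(\alpha_0)}$ as a basis would produce the coefficient $-1$ at $\varphi_k$ for $\St$, signalling the wrong basis---but as written this step of the proposal is false and should be repaired before the rest of the computation goes through.
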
	
	
		\begin{proof}
		%Apart from the type functions on the Brauer trees, 
		All the information in Table~\ref{tab:blocksqcong3} and Table~\ref{tab:blocksqcong5} can be found in \cite[Chapters~8~and~9]{BonBook}. 
		%	\par
		%	So let $\bB\in \{A_{\alpha}\mid \alpha \in [T_{2'}^\wedge / \equiv] \setminus \{1\}\}\cup\{A'_{\theta}\mid \theta \in [T_{2'}'^\wedge / \equiv] \setminus \{1\}\}$, let $D$ be the defect group of $\bB$  specified in Table~\ref{tab:blocksqcong3} and  Table~\ref{tab:blocksqcong5} and let $u$ be the generator of $D_1\cong C_{2}$. Note that because the Sylow $2$-subgroups of $G$ are quaternion groups of order~8, in all cases $D_{1}=Z(G)$, so $u=-I_{2}$ and $D_{1}\leq T$, $D_{1}\leq T'$. Now for each $\alpha \in [T_{2'}^\wedge / \equiv] \setminus \{1\}$ is a $2'$-character of $T$ and each $\theta \in [T_{2'}'^\wedge / \equiv] \setminus \{1\}$ is a $2'$-character of $T'$,  so  $ \alpha(u) = 1=\theta(u)$.  It then follows from the character table of $G$ (see Table~\ref{tab:CTBLsl2}) that  $R(\alpha)(u) =q+1> 0$ and $R'(\theta)(u)=q-1>0$, determining the type function on $\sigma(\bB)$ in all cases. 
	\end{proof}
	
	\begin{longtable}{c||c|c|c}%|c|c}%|c}
		\caption{The $2$-blocks of $\SL_2(q)$ when  $q \equiv 3\pmod{8}$.}
		\label{tab:blocksqcong3}
		\\  
		Block
		&	\begin{tabular}{c}
			Number of Blocks \\
			(Type)
		\end{tabular}
		& 	\begin{tabular}{c}
			Defect \\
			Groups
		\end{tabular}
		& 
		\begin{tabular}{c}
			Decomposition Matrix/ \\ Brauer Tree
			
		\end{tabular}
		\\ \hline \hline 
		
		$\bB_{0}(G) $ 
		& 	\begin{tabular}{c}
			$1$ \\
			\\
			(Principal) 
		\end{tabular}
		& 	\begin{tabular}{c}
			$%\langle S'_2, s' \rangle \cong 
			\mathcal{Q}_{8}$ %\\
			%			\\
			%			\small{(not cyclic)}
		\end{tabular}
		&  
		\begin{blockarray}{cccc}
			\vspace{-2.5ex} \\
			& {~} $k$ {~} & $S_+$ & $S_-$   \\ 
			\begin{block}{c(ccc)}
				$1_G$					&$1$&$0$&$0$\\
				$\St$					&$1$&$1$&$1$\\
				$R_+'(\theta_0)$		&$0$&$1$&$0$\\
				$R_-'(\theta_0)$        &$0$&$0$&$1$\\
				$R_+(\alpha_0)$		&$1$&$1$&$0$\\
				$R_-(\alpha_0)$        &$1$&$0$&$1$\\
				$R'(\theta_1)$			&$0$&$1$&$1$\\
			\end{block} \vspace{-1.4ex}\\
			\BAmulticolumn{4}{c}{\small{$(\theta_1 \in [S_2'^\wedge / \equiv]$ is of order $4)$}}  \\
		\end{blockarray}   \vspace{-1.4ex}
		\\ \hline 
		
		\begin{tabular}{c}
			\vspace{-1.2ex}\\
			$A_\alpha$ \\
			\\
			\small{$(\alpha \in [T_{2'}^\wedge / \equiv] \setminus \{1\})$} \\
			\\
		\end{tabular}	
		& \begin{tabular}{c}
			$\frac{1}{2}\left( \frac{q-1}{2} - 1\right)$ \\
			\\
			\small{(Nilpotent)} \\
		\end{tabular}
		& 	$%S_2 \cong 
		C_2$	
		& 	\begin{tabular}{c}
			$$  \xymatrix@R=0.0000pt@C=30pt{	
				\\%{_+} & {_-}\\
				{\Circle}  \ar@{-}[r]^{S_\alpha} & {\Circle}    \\
				{^{R(\alpha)}}&{^{R(\alpha\eta)}}
			}$$\\
			\small{$(S_2^{\wedge} = \langle \eta \rangle)$}\\
			\vspace{-.8ex}
		\end{tabular}
		\\ \hline 
		\begin{tabular}{c}
			$A'_\theta$  \\ 
			\\
			\small{$(\theta \in [T_{2'}'^\wedge / \equiv] \setminus \{1\})$} \\
		\end{tabular}
		& 	\begin{tabular}{c}
			$\frac{1}{2}\left(\frac{q+1}{4} - 1\right)$ \\ 
			\\
			\small{(Nilpotent)} \\
		\end{tabular}
		& 	$%S_2' \cong 
		C_4$	
		& 	\begin{tabular}{c}
			$$  \xymatrix@R=0.0000pt@C=30pt{	
				\\%{_+} & {_-}\\
				{\Circle}  \ar@{-}[r]^{S_\theta} & {\CIRCLE}    \\
				{^{R'(\theta)}}&{^{\chi_{\Lambda}}}
			} $$ \\
			
			\small{$(S_2'^{\wedge} = \langle \eta \rangle$, $\chi_{\Lambda} =  \sum\limits_{j = 1}^{3}  R'(\theta\eta^j))$}\\
			\vspace{-1.2ex}
		\end{tabular}
		\\ \hline 
		
	\end{longtable}

	\newpage
	\begin{longtable}{c||c|c|c}%|c|c}%|c}
		\caption{The $2$-blocks of $\SL_2(q)$ when $q \equiv -3\pmod{8}$.}	\vspace{-2mm}
		\label{tab:blocksqcong5}
		\\  
		Block
		&	\begin{tabular}{c}
			Number of Blocks \\
			(Type)
		\end{tabular}
		& 	\begin{tabular}{c}
			Defect \\
			Groups
		\end{tabular}
		& 
		\begin{tabular}{c}
			Decomposition Matrix/ \\Brauer Tree 
			
		\end{tabular}
		\\ \hline \hline 
		
		$\bB_{0}(G)$ 
		& \begin{tabular}{c}
			$1$ \\
			\\
			(Principal)
		\end{tabular}
		& \begin{tabular}{c}
			$%\langle S_2, s \rangle \cong 
			\mathcal{Q}_{8}$ %\\
			%		\\
			%		\small{(not cyclic)}
		\end{tabular}
		&  
		\begin{blockarray}{cccc}
			\vspace{-2.5ex} \\
			& {~} $k$ {~} & $S_+$ & $S_-$   \\ 
			\begin{block}{c(ccc)}
				$1_G$					&$1$&$0$&$0$\\
				$\St$					&$1$&$1$&$1$\\
				$R_+'(\theta_0)$		&$0$&$1$&$0$\\
				$R_-'(\theta_0)$        &$0$&$0$&$1$\\
				$R_+(\alpha_0)$		&$1$&$1$&$0$\\
				$R_-(\alpha_0)$        &$1$&$0$&$1$\\
				$R(\alpha_1)$			&$2$&$1$&$1$\\
			\end{block} \vspace{-1.4ex}
			\\
			\BAmulticolumn{4}{c}{\small{$(\alpha_1 \in [S_2^\wedge / \equiv]$ of order $4)$}} \\
		\end{blockarray}  \vspace{-1.4ex}
		\\ \hline 
		
		\begin{tabular}{c}
			\vspace{-2ex}\\
			$A_\alpha$\\
			\\
			\small{$(\alpha \in [T_{2'}^\wedge / \equiv] \setminus \{1\})$} \\
			\\
		\end{tabular}
		& 	\begin{tabular}{c}
			\\
			$\frac{1}{2}\left( \frac{q-1}{4} - 1\right)$ \\
			\\
			\small{(Nilpotent)} \\
			\\
		\end{tabular}
		& 	$%S_2 \cong 
		C_4$
		&   	\begin{tabular}{c}
			$$  \xymatrix@R=0.0000pt@C=30pt{	
				\\%{_+} & {_-}\\
				{\Circle}  \ar@{-}[r]^{S_\alpha} & {\CIRCLE}    \\
				{^{R(\alpha)}}&{^{\chi_{\Lambda}}}
			}$$ \\
			\small{$(S_2^{\wedge} = \langle \eta \rangle$, $\chi_{\Lambda} =  \sum\limits_{j = 1}^{3}  R(\alpha\eta^j))$}\\
			\vspace{-1.8ex}
		\end{tabular}
		\vspace{-1.2ex}\\ \hline 
		
		\begin{tabular}{c}
			$A'_\theta$  \\ 
			\\
			\small{$(\theta \in [T_{2'}'^\wedge / \equiv] \setminus \{1\})$} \\
		\end{tabular}
		& 	\begin{tabular}{c}
			$\frac{1}{2}\left(\frac{q+1}{2} - 1\right)$ \\ 
			\\
			\small{(Nilpotent)} \\
		\end{tabular}
		& 	$%S_2' \cong 
		C_2$		
		& 	\begin{tabular}{c}
			$$  \xymatrix@R=0.0000pt@C=30pt{	
				\\ %{_+} & {_-}\\
				{\Circle}  \ar@{-}[r]^{S_\theta} & {\Circle}    \\
				{^{R'(\theta)}}&{^{R'(\theta\eta)}}
			} $$ \\
			\small{$(S_2'^{\wedge} = \langle \eta \rangle)$}\\
			\vspace{-1.8ex}
		\end{tabular}
		\\ \hline 	
	\end{longtable}

	\vspace{2mm}
	\subsection{The trivial source character table}{\ }
	\label{secSL2l=2}
	
	\noindent The trivial source character table $\Triv_{2}(G)= [T_{i,v}]_{1\leq i,v\leq 4}$ of~$G$ is now up to a large extent obtained via inflation from $\overline{G}$. For this reason, we write $T_{i,v}(G)$ for the matrix $T_{i,v}$ of  $\Triv_{2}(G)$ and $T_{i,v}(\overline{G})$ for the matrix $T_{i,v}$ of  $\Triv_{2}(\overline{G})$. 
	
	\enlargethispage{15mm}
	\begin{thm}\label{thm:sl2ql=2}%i.e. q=5 mod 8 and q=1 mod 4
		Assume  $G=\SL_{2}(q)$ where  $q\equiv \pm3\pmod{8}$. Then the trivial source character table $\Triv_{2}(G)= [T_{i,v}]_{1\leq i,v\leq 4}$ is given as follows.
		\begin{enumerate}[{\,\,\rm(a)}] \setlength{\itemsep}{2pt}
			\item For $q\equiv \pm3\pmod{8}$ the following holds:
			\begin{itemize}
				\item[\rm(i)] $T_{i,v}=\mathbf{0}$ for every $1\leq i < v\leq 4$\,;
				\item[\rm(ii)] $T_{i,1}(G)=T_{i,2}(G)=T_{i-1,1}(\overline{G})$ for every  $2\leq i \leq 4$\,;
				\item[\rm(iii)]  $T_{3,3}(G)=T_{2,2}(\overline{G})$, $T_{4,3}(G)=T_{3,2}(\overline{G})$, $T_{4,4}(G)=T_{3,3}(\overline{G})$.
			\end{itemize}
			\item If $q\equiv 3\pmod{8}$, then the matrix $T_{1,1}$ is as given in Table~\ref{tab:Tiasl2q=3mod8}.
			\item If $q\equiv -3\pmod{8}$, then the matrix $T_{1,1}$ is as given in Table~\ref{tab:Tiasl2q=5mod8}.
		\end{enumerate}   
	\end{thm}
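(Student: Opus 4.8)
The plan rests on the observation, emphasised in the introduction, that every non-trivial $2$-subgroup of $G$ contains the centre $Z=R_2$, so that the trivial source $kG$-modules with non-trivial vertex are obtained by inflation from $\overline{G}=G/Z$, while only the PIMs (vertex $R_1=\{1\}$) are genuinely new. Part~(a)(i) is immediate: since $R_1<R_2<R_3<R_4$ is a chain, we have $R_v\not\leq_G R_i$ whenever $i<v$, whence $T_{i,v}=\mathbf{0}$ by Remark~\ref{rem:tsctbl}(c).

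The key structural input is a bijection $\Inf_{G/Z}^{G}\colon \TS(\overline{G};Q_{i-1})\to\TS(G;R_i)$ for $2\leq i\leq 4$, where $Q_{i-1}=R_i/Z$. Inflation is fully faithful, hence injective on isomorphism classes, and preserves trivial source modules by Proposition~\ref{prop:omnts}(a). Evaluating at a generator $z$ of $Z$ gives $\chi_{\widehat{\Inf\overline{M}}}(z)=\chi_{\widehat{\overline{M}}}(1)=\dim_k\overline{M}>0$, so by Lemma~\ref{lem:tscharacters}(b) the element $z$ lies in a vertex of $\Inf(\overline{M})$, and centrality of $Z$ forces $Z$ into every vertex; the vertex is then the full preimage $R_i$ of $Q_{i-1}$. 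Since $N_G(R)/R\cong N_{\overline{G}}(R/Z)/(R/Z)$ for every $Z\leq R$, we have a canonical isomorphism $\overline{N}_v(G)\cong\overline{N}_{v-1}(\overline{G})$ with matching $\ell'$-classes, so Proposition~\ref{prop:omnts}(d) gives $|\TS(G;R_i)|=|\TS(\overline{G};Q_{i-1})|$ and the injective inflation map is a bijection.

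The second ingredient is the identity $M[Q]\cong\overline{M}[Q/Z]$ as $k[N_G(Q)/Q]$-modules, valid for $M=\Inf(\overline{M})$ and any $\ell$-subgroup $Q\geq Z$: indeed $M^Q=\overline{M}^{Q/Z}$ since $Z$ acts trivially, and in the relative-trace denominator every term $\tr_P^Q(M^P)$ with $Z\not\leq P$ vanishes because it factors through $\tr_P^{ZP}(m)=m+zm=2m=0$, while the terms with $Z\leq P$ reassemble into $\sum_{\overline{P}<Q/Z}\tr_{\overline{P}}^{Q/Z}(\overline{M}^{\overline{P}})$. Combined with the bijection above, parts (a)(ii) and (a)(iii) become formal. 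Writing $\bar{s}$ for the image of $s$ under the canonical isomorphism $\overline{N}_v(G)\cong\overline{N}_{v-1}(\overline{G})$, for every $v\geq 2$ we obtain
$$\tau^{G}_{R_v,s}([\Inf\overline{M}])=\varphi_{M[R_v]}(s)=\varphi_{\overline{M}[Q_{v-1}]}(\bar{s})=\tau^{\overline{G}}_{Q_{v-1},\bar{s}}([\overline{M}]),$$
so that $T_{i,v}(G)=T_{i-1,v-1}(\overline{G})$; the case $v=2$ is the identity $T_{i,2}(G)=T_{i-1,1}(\overline{G})$ of~(ii), and $v\in\{3,4\}$ gives $T_{3,3}(G)=T_{2,2}(\overline{G})$, $T_{4,3}(G)=T_{3,2}(\overline{G})$, $T_{4,4}(G)=T_{3,3}(\overline{G})$, the three assertions of~(iii). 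Finally, the remaining equality in~(ii) follows since $\tau^{G}_{R_1,s}([M])=\chi_{\widehat{M}}(s)=\chi_{\widehat{\overline{M}}}(\bar{s})=\varphi_{\overline{M}}(\bar{s})=\tau^{G}_{R_2,s}([M])$, using Remark~\ref{rem:tsctbl}(d), inflation of characters, and the $Q=Z$ case $M[Z]\cong\overline{M}$ of the Brauer-quotient identity.

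For (b) and (c) it remains to compute $T_{1,1}(G)$, which by Remark~\ref{rem:tsctbl}(a) records the projective indecomposable characters of $G$ evaluated at $[G]_{\ell'}$. I would read these off the block data of Lemma~\ref{lem:blocksqcong5}: for $\bB_0(G)$ the columns of the decomposition matrices in Table~\ref{tab:blocksqcong3} and Table~\ref{tab:blocksqcong5} give $\chi_{\widehat{P_k}},\chi_{\widehat{P_{S_+}}},\chi_{\widehat{P_{S_-}}}$; the nilpotent blocks of defect $C_2$ are handled by Remark~\ref{rem:cyclicDefectC2C4}, and for those of defect $C_4$ the unique PIM affords the sum $R'(\theta)+\chi_{\Lambda}$ (resp.\ $R(\alpha)+\chi_{\Lambda}$) of the two Brauer-tree vertices. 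Evaluating these characters at the $\ell'$-class representatives via Table~\ref{tab:CTBLsl2} then produces Tables~\ref{tab:Tiasl2q=3mod8} and~\ref{tab:Tiasl2q=5mod8}. The main obstacle is establishing the Brauer-quotient identity $M[Q]\cong\overline{M}[Q/Z]$ cleanly and matching the $\ell'$-class representatives of $\overline{N}_v(G)$ with those of $\overline{N}_{v-1}(\overline{G})$; once this compatibility is secured, all remaining entries are routine character evaluations.
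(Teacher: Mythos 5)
Your proposal is correct and follows essentially the same route as the paper's proof: $T_{i,v}=\mathbf{0}$ for $i<v$ via Remark~\ref{rem:tsctbl}(c), reduction of all non-trivial-vertex entries to $\Triv_2(\overline{G})$ via inflation along $G\to G/Z$ (using that every non-trivial $2$-subgroup contains $Z$, that representatives of $2'$-classes match modulo $Z$, and that $M[Z]=M$ for inflated modules in characteristic $2$), and computation of $T_{1,1}$ by reading the projective indecomposable characters off the decomposition matrices of $\bB_0(G)$ and the Brauer trees of the blocks $A_\alpha$, $A'_\theta$ before evaluating on $[G]_{2'}$ via Table~\ref{tab:CTBLsl2}. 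The additional detail you supply --- the explicit Brauer-quotient identity $M[Q]\cong\overline{M}[Q/Z]$ with the relative-trace terms for $Z\not\leq P$ killed by $\tr_P^{ZP}(m)=2m=0$, and the counting argument via $N_G(R)/R\cong N_{\overline{G}}(R/Z)/(R/Z)$ making $\Inf_{\overline{G}}^{G}$ a bijection $\TS(\overline{G};Q_{i-1})\to\TS(G;R_i)$ --- merely makes rigorous the steps the paper asserts without proof, and is sound.
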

	
	{\footnotesize
		\renewcommand*\arraystretch{1.5}
		\begin{table}[!htpb]
			\caption{$T_{1,1}$  when $q\equiv 3\pmod{8}$}\vspace{-3mm}
			\label{tab:Tiasl2q=3mod8}
			\[
			\begin{array}{r|c||c|c|c|c|}
				\cline{2-6}	
				&
				&   I_{2}
				&	\begin{tabular}{c}
					${\bf d}(a) $\\
					\scriptsize{$a\in \Gamma_{2'}$}
				\end{tabular}
				&     \begin{tabular}{c}
					${\bf d}'(\xi)$\\
					\scriptsize{$\xi\in \Gamma'_{2'}$}
				\end{tabular}
				&       \begin{tabular}{c}
					$ u_{\tau}$\\
					\scriptsize{$\tau\in\{\pm\}$}
				\end{tabular}
				\\ \hline \hline 	
				&     1_{G}+\St+R_{+}(\alpha_{0})+R_{-}(\alpha_{0})
				&	2(q+1)
				&      4
				&      0
				&      2
				\\  \cline{2-6}		
				&       \St+R_{+}(\alpha_{0})+R'_{+}(\theta_{0})+R'(\theta_{1})
				&	3q-1
				&      2
				&      -4
				&        \tau \sqrt {-q}-1
				\\  \cline{2-6}	    
				&   \St+R_{-}(\alpha_{0})+R'_{-}(\theta_{0})+R'(\theta_{1})
				&	3q-1
				&      2
				&     -4
				&      -\tau \sqrt {-q}-1
				\\  \cline{2-6}	
				T_{1,1}
				&    \begin{tabular}{c}
					$\sum\limits_{j = 1}^{4}  R'(\theta\eta^j)$\\
					\scriptsize{$(\theta \in [T_{2'}'^\wedge / \equiv] \setminus \{1\}$, $S_{2}'^{\wedge}=\langle \eta \rangle)$} 
				\end{tabular}
				&	4(q-1)
				&      0
				&      4(-\theta(\xi)-\theta(\xi)^{-1})	
				&      -4
				\\  \cline{2-6}	
				&   \begin{tabular}{c}
					$R(\alpha)+R(\alpha\eta)$\\
					\scriptsize{$(\alpha \in [T_{2'}^\wedge / \equiv] \setminus \{1\}$, $S_{2}^{\wedge}=\langle \eta \rangle)$} 
				\end{tabular}
				&	2(q+1)
				&    2(\alpha(a)+\alpha(a)^{-1})
				&      0
				&      2
				\\ \hline\hline	
			\end{array}
			\]

		\end{table}
	}	 
	
	{\footnotesize
		\renewcommand*\arraystretch{1.5}
		\begin{table}[!htbp]%htpb
			\caption{$T_{1,1}$  when $q\equiv -3\pmod{8}$}\vspace{-3mm}
			\label{tab:Tiasl2q=5mod8}
			\[
			\begin{array}{r|c||c|c|c|c|}
				\cline{2-6}	
				&
				&    I_{2}
				&	\begin{tabular}{c}
					${\bf d}(a) $\\
					\scriptsize{$a\in \Gamma_{2'}$}
				\end{tabular}
				&     \begin{tabular}{c}
					${\bf d}'(\xi)$\\
					\scriptsize{$\xi\in \Gamma'_{2'}$}
				\end{tabular}
				&       \begin{tabular}{c}
					$u_{\tau}$\\
					\scriptsize{$\tau\in\{\pm\}$}
				\end{tabular}
				\\ \hline \hline 	
				&     1_{G}+\St+R_{+}(\alpha_{0})+R_{-}(\alpha_{0})+2R(\alpha_{1})
				&	4(q+1)
				&      8
				&      0
				&      4
				\\  \cline{2-6}		
				&      \St+R_{+}(\alpha_{0})+R'_{+}(\theta_{0})+R(\alpha_{1})
				&     3q+1
				&     4
				&     -2
				&    1+ \tau \sqrt {q}
				\\  \cline{2-6}	    
				&   \St+R_{-}(\alpha_{0})+R'_{-}(\theta_{0})+R(\alpha_{1})
				&     3q+1
				&     4
				&    -2
				&     1- \tau \sqrt {q}
				\\  \cline{2-6}	
				T_{1,1}
				&    \begin{tabular}{c}
					$\sum\limits_{j = 1}^{4}  R(\alpha\eta^j)$\\
					\scriptsize{$(\alpha \in [T_{2'}^\wedge / \equiv] \setminus \{1\}$, $S_{2}^{\wedge}=\langle \eta \rangle)$} \\
				\end{tabular}
				&	4(q+1)
				&   	4(\alpha(a)+\alpha(a)^{-1})
				&      0
				&      4
				\\  \cline{2-6}	
				&   \begin{tabular}{c}
					$R'(\theta)+R'(\theta\eta)$\\
					\scriptsize{$(\theta \in [T_{2'}'^\wedge / \equiv] \setminus \{1\}$, $S_{2}'^{\wedge}=\langle \eta \rangle)$} \\
				\end{tabular}
				&	2(q-1)
				&     0
				&           2(-\theta(\xi)-\theta(\xi)^{-1})
				&           -2
				\\ \hline \hline 	
				%%        &     1_{G}+\St + R_+(\alpha_0)+R_-(\alpha_0)
				%%	&	2(q+1)
				%%	&      4
				%%	&      0
				%%	&      2
				%%	 \\  \cline{2-6}		
				%%	&      \St + R_+(\alpha_0)
				%%	&	\frac{3q+1}{2}
				%%	&     2
				%%	&     -1
				%%	&       \frac{1 +  \tau \sqrt {q}}{2}
				%%	 \\  \cline{2-6}	    
				%%	&  \St+R_-(\alpha_0)
				%%	&	\frac{3q+1}{2}
				%%	&    2
				%%	&    -1
				%%	&      \frac{1 -  \tau \sqrt {q}}{2}
				%%	 \\  \cline{2-6}			
				%%	   \begin{tabular}{c}
				%%		$T_{2,1}\!\!\!$\\
				%%		\scriptsize{\phantom{X}} \\
				%%	\end{tabular}
				%%	&     \begin{tabular}{c}
				%%		$R(\alpha)+R(\alpha\eta^{2})$\\
				%%			\scriptsize{$(\alpha \in [T_{2'}^\wedge / \equiv] \setminus \{1\}$, $S_{2}^{\wedge}=\langle \eta \rangle)$}\\
				%%	\end{tabular}
				%%	&	2(q+1)
				%%	&      2(\alpha(a)+\alpha(a)^{-1})
				%%	&     0
				%%	&     2
				%%	\\ \cline{2-6}
				%%	&     \begin{tabular}{c}
				%%		$R'(\theta)$\\
				%%		\scriptsize{$(\theta \in [T_{2'}'^\wedge / \equiv] \setminus \{1\})$} 
				%%	\end{tabular}
				%%	&	q-1
				%%	&      0
				%%	&      -\theta(\xi)-\theta(\xi)^{-1}
				%%	&      -1 
				%%	\\ \hline \hline 	 
				%%     & 1_{G}+\St
				%%	&	q+1
				%%	&      2
				%%	&      0
				%%	&      1 
				%%	\\ \cline{2-6}
				%%	   \begin{tabular}{c}
				%%		$T_{3,1}\!\!\!$\\
				%%		\scriptsize{\phantom{X}} \\
				%%	\end{tabular}
				%%	&     \begin{tabular}{c}
				%%		$R(\alpha)$\\
				%%			\scriptsize{$(\alpha \in [T_{2'}^\wedge / \equiv] \setminus \{1\})$}
				%%	\end{tabular}
				%%	&	q+1
				%%	&      \alpha(a)+\alpha(a)^{-1}
				%%	&    0
				%%	&     1
				%%	\\ \hline \hline 	 	
				%%        &   1_{G}
				%%	&	1
				%%	&      1
				%%	&      1
				%%	&      1
				%%	 \\ \cline{2-6}	
				%%	 T_{4,1}
				%%	 &  \St
				%%	 &  q
				%%	 &   1
				%%	 &   -1
				%%	 &   0
				%%         \\  \cline{2-6}	
				%%	 & \St
				%%	 &  q
				%%	 &    1
				%%	 &   -1
				%%	 &  0
				%%	 \\
				%%\hline\hline			  
			\end{array}
			\]
			
			%%%\[
			%%%\begin{array}{r|c||c|c|}
			%%%         \cline{2-4}	
			%%%        &    
			%%%        & 1
			%%%	&     \begin{tabular}{c}
			%%%		$\overline{{\bf d}(a)Z}$ \\
			%%%		\scriptsize{${\bf d}(a)\in \Gamma(T_{2'})$}
			%%%	\end{tabular}   
			%%%	\\ \hline \hline 	
			%%%     & 1_{\overline{G}}+\St
			%%%	&	2   
			%%%	&       2   
			%%%	\\ \cline{2-4}	
			%%%	   \begin{tabular}{c}
			%%%		$T_{2,2}\!\!\!$\\
			%%%		\scriptsize{\phantom{X}} \\
			%%%	\end{tabular}
			%%%	&     \begin{tabular}{c}
			%%%		$R(\alpha)$\\
			%%%		 \scriptsize{$(\alpha \in [T_{2'}^\wedge / \equiv] \setminus \{1\}$,}\\
			%%%		   \scriptsize{$\alpha(-I_{2})=1)$} 
			%%%	\end{tabular}
			%%%	&	2
			%%%	&        \alpha(a)+\alpha(a)^{-1}
			%%%	\\ \hline \hline 	 
			%%%         &   1_{\overline{G}}
			%%%	&	1
			%%%	&      1
			%%%	 \\ \cline{2-4}	
			%%%	 T_{3,2}
			%%%	 &  \St
			%%%	 &  1
			%%%	 &   1
			%%%         \\  \cline{2-4}	
			%%%	 & \St
			%%%	 &  1
			%%%	 &  1
			%%%	 \\
			%%%\hline\hline			
			%%%\end{array}
			%%%\qquad\qquad
			%%%\begin{array}{r|l||c|c|c|}
			%%% \multicolumn{5}{c}{ } 	\\
			%%% \multicolumn{5}{c}{ } 	\\
			%%% \multicolumn{5}{c}{ } 	\\
			%%% \multicolumn{5}{c}{ } 	\\
			%%%      \cline{2-5}	
			%%%         &
			%%%         &  1
			%%%	&	x
			%%%	&      x^{2}
			%%%	\\ \hline \hline 	
			%%%        &    1_{\overline{G}}
			%%%	&	1
			%%%	&      1
			%%%	&      1
			%%%	 \\  \cline{2-5}	
			%%%	 T_{3,3}	
			%%%	  &  \St
			%%%	  &  1
			%%%	  &   \omega
			%%%	  &   \omega^{2}
			%%%         \\  \cline{2-5}		
			%%%	 &  \St
			%%%	 & 1
			%%%	 &   \omega^{2}
			%%%	 &  \omega
			%%%	 \\	 
			%%%\hline\hline	
			%%%\multicolumn{5}{c}{\omega:=\mbox{\textup{primitive 3rd root of unity}}} 		   
			%%%\end{array}
			%%%\]
		\end{table}
	}

	\begin{proof}{\ }
		\begin{enumerate}[{\rm(a)}]
			\item
			Again, the fact that $T_{i,v}=\mathbf{0}$ for every $1\leq i < v\leq 4$ is immediate from \smallskip Remark~\ref{rem:tsctbl}(c). This \smallskip proves~{\rm(i)}.\\ 
			Next, we notice that the subgroups $R_{2},R_{3}, R_{4}$ all contain the centre $Z=R_{2}$. The following assertions follow immediately.
			\begin{enumerate}[\,\,1.]  
				\item  For every $2\leq i\leq 4$ and every $M\in\TS(G,R_{i})$ we have $M[R_{2}]=M$ (as $kN_{2}$-modules). Therefore, as our choices of $[\overline{N}_{1}]_{2'}$ and $[\overline{N}_{2}]_{2'}$ agree modulo $Z$, by definition of the species we have  $T_{i,1}(G)=T_{i,2}(G)$ for every  \smallskip $2\leq i \leq 4$\,.
				\item For every $2\leq i\leq 4$, we have $R_{i}/Z=Q_{i-1}$ (where $Q_{i-1}$ is as defined in Notation~\ref{nota:psl2ql=2}), and so any trivial source module in $\TS(G,R_{i})$ is the inflation from $\overline{G}=G/Z$ to~$G$ of a trivial source $k\overline{G}$-module with vertex $R_{i}/Z$,  i.e.
				$$\TS(G,R_{i})=\{\Inf_{\overline{G}}^{G}(M)\mid M\in \TS(\overline{G},Q_{i-1})\}$$
				and the corresponding characters are 
				$$\chi^{}_{\widehat{\Inf_{\overline{G}}^{G}(M)}}= \Inf_{\overline{G}}^{G}(\chi^{}_{\widehat{M}})\,.$$
				%Hence, the rows of $T_{i,a}$ are labelled by the characters 
				It follows that $T_{i,2}(G)=T_{i-1,1}(\overline{G})$ for every  $2\leq i \leq 4$ and  $T_{3,3}(G)=T_{2,2}(\overline{G})$, $T_{4,3}(G)=T_{3,2}(\overline{G})$, $T_{4,4}(G)=T_{3,3}(\overline{G})$ because our choices of the representatives of the $2'$-conjugacy classes agree modulo $Z$, proving {\rm(ii)} and {\rm(iii)}.
			\end{enumerate}

			\item / {\rm(c)} The PIMs of $\bB_{0}(G)$ are the projective covers of the three simple $\bB_{0}(G)$-modules $k,S_{+},S_{-}$ and their characters can be read from the $2$-decomposition matrix in Table~\ref{tab:blocksqcong3} and Table~\ref{tab:blocksqcong5}. The PIMs of the blocks of type $A_{\alpha}$, $A'_{\theta}$ are read from their Brauer trees in Table~\ref{tab:blocksqcong3} and Table~\ref{tab:blocksqcong5} using Remark~\ref{rem:tsmodulecyclicdef}(1). We summarize this information in Table~\ref{tab:pimsSLl=2q3mod8} and Table~\ref{tab:pimsSLl=2q5mod8} below.
			\renewcommand*\arraystretch{1.5}
			\begin{table}[!htpb]%htpb
				\caption{PIMs of $kG$ when $q\equiv3\pmod{8}$}\label{tab:sl23}\vspace{-5mm}
				\label{tab:pimsSLl=2q3mod8}
				\[\begin{array}{|c|c|c|}
					\hline
					\text{Block} 
					&	\text{Module $M$}
					&    \text{Character $\chi_{\widehat{M}}$ }
					\\ \hline \hline 
					\bB_{0}(G)
					& P_{k}
					& 1_{G}+\St+R_{+}(\alpha_{0})+R_{-}(\alpha_{0})
					\\ 
					\bB_{0}(G)
					& P_{S_{+}}
					& \St+R_{+}(\alpha_{0})+R'_{+}(\theta_{0})+R'(\theta_{1})
					\\ 
					\bB_{0}(G)
					& P_{S_{-}}
					& \St+R_{-}(\alpha_{0})+R'_{-}(\theta_{0})+R'(\theta_{1})
					\\ \hline 
					\begin{tabular}{c}
						$A'_{\theta}$\\
						\scriptsize{$(\theta \in [T_{2'}'^\wedge / \equiv] \setminus \{1\})$} 
					\end{tabular}
					& 	 \boxed{
						\begin{smallmatrix}
							S_{\theta}\\
							S_{\theta}\\
							S_{\theta} \\
							S_{\theta}
						\end{smallmatrix}
					}
					&   \sum\limits_{j = 1}^{4}  R'(\theta\eta^j)
					\\ \hline
					\begin{tabular}{c}
						$A_{\alpha}$\\
						\scriptsize{$(\alpha \in [T_{2'}^\wedge / \equiv] \setminus \{1\})$} 
					\end{tabular}
					& 	 \boxed{
						\begin{smallmatrix}
							S_{\alpha}\\
							S_{\alpha}
						\end{smallmatrix}
					}
					&  R(\alpha)+R(\alpha\eta)
					\\ \hline
				\end{array}\]
			\end{table}
			\renewcommand*\arraystretch{1.5}
			\begin{table}[!htpb]%htpb
				\caption{PIMs of $kG$ when $q\equiv-3\pmod{8}$}\label{tab:sl25}\vspace{-6mm}
				\label{tab:pimsSLl=2q5mod8}
				\[\begin{array}{|c|c|c|}
					\hline
					\text{Block} 
					&	\text{Module }M
					&    \text{Character } \chi_{\widehat{M}} 
					\\ \hline \hline 
					\bB_{0}(G)
					& P_{k}
					& 1_{G}+\St+R_{+}(\alpha_{0})+R_{-}(\alpha_{0})+2R(\alpha_{1})
					\\ 
					\bB_{0}(G)
					& P_{S_{+}}
					& \St+R_{+}(\alpha_{0})+R'_{+}(\theta_{0})+R(\alpha_{1})
					\\ 
					\bB_{0}(G)
					& P_{S_{-}}
					& \St+R_{-}(\alpha_{0})+R'_{-}(\theta_{0})+R(\alpha_{1})
					\\ \hline
					\begin{tabular}{c}
						$A_{\alpha}$\\
						\scriptsize{$(\alpha \in [T_{2'}^\wedge / \equiv] \setminus \{1\})$} 
					\end{tabular}
					& 	 \boxed{
						\begin{smallmatrix}
							S_{\alpha}\\
							S_{\alpha}\\
							S_{\alpha} \\
							S_{\alpha}
						\end{smallmatrix}
					}
					&  \sum\limits_{j = 1}^{4}  R(\alpha\eta^j)
					\\ \hline
					
					\begin{tabular}{c}
						$A'_{\theta}$\\
						\scriptsize{$(\theta \in [T_{2'}'^\wedge / \equiv] \setminus \{1\})$} 
					\end{tabular}
					& 	 \boxed{
						\begin{smallmatrix}
							S_{\theta}\\
							S_{\theta}
						\end{smallmatrix}
					}
					& R'(\theta)+R'(\theta\eta)
					\\ \hline
				\end{array}\]
			\end{table}
			
			\noindent Evaluating these characters at the $2'$-conjugacy classes of $G$ yields $T_{1,1}$, as required.
		\end{enumerate}
	\end{proof}
	
}

	\clearpage

	\noindent\textbf{Acknowledgments.} 
	Much of the work for this project was done at the Mathematisches Forschungsinstitut Oberwolfach in March 2021 supported through the program ``Research in Pairs'' and Oberwolfach Research Fellowships. The authors thank the MFO for their generous support and hospitality. Part of the results in the case $\ell=2$ rely on methods and earlier computations  of trivial source modules and trivial source character tables  for  blocks with a Klein-four defect group and dihedral groups of order $4w$ for $w$ an odd number realised by the first author in his doctoral thesis. 
	The authors also wish to thank Gunter Malle and Robert Boltje for useful comments on a preliminary version of this manuscript. 
	\bigskip\bigskip
	\bigskip\bigskip

	%--------------------------------------------------------------------------------------------------------
	%------------------------------------------ Bibliography--------------------------------------------
	%-------------------------------------------------------------------------------------------------------

	\nocite{}
	\bibliographystyle{amsalpha}
	\bibliography{biblio.bib}
	\bigskip
	
	%%%%%%%
	% non-BibTeX biblio:
	%%%%%%%

	%--------------------------------------------------------------------------------------------------------
	%------------------------------------------ END                      --------------------------------------
	%--------------------------------------------------------------------------------------------------------

\end{document}